\newcounter{dummy}
\newcommand\myitem[1][]{\item[#1]\refstepcounter{dummy}\def\@currentlabel{#1}}
\def\BState{\State\hskip-\ALG@thistlm}
\newtheorem{theorem}{Theorem}[section]
\newtheorem{proposition}[theorem]{Proposition}
\newtheorem{lemma}[theorem]{Lemma}
\theoremstyle{definition}
\newtheorem{definition}[theorem]{Definition}
\newtheorem{assumption}[theorem]{Assumption}
\newtheorem{problem}[theorem]{Problem}
\newtheorem{property}[theorem]{Property}
\theoremstyle{remark}
\newtheorem{remark}[theorem]{Remark}
\numberwithin{equation}{section}
\newcommand{\eps}{\varepsilon}
\newcommand{\p}{\mathbb{P}}
\newcommand{\E}{\mathbb{E}}
\newcommand{\R}{\mathbb{R}}
\newcommand{\N}{\mathbb{N}}
\newcommand{\calP}{\mathcal{P}}
\newcommand{\calM}{\mathcal{M}}
\newcommand{\calD}{\mathcal{D}}
\newcommand{\calL}{\mathcal{L}}
\newcommand{\calB}{\mathcal{B}}
\newcommand{\calV}{\mathcal{V}}
\newcommand{\err}{J}
\newcommand{\conv}{\text{Conv}}
\newcommand{\FL}{\mathsf{FL}}
\newcommand\norm[1]{\left\lVert#1\right\rVert}
\newcommand{\norml}[1]{{\left\vert\kern-0.25ex\left\vert\kern-0.25ex\left\vert #1\right\vert\kern-0.25ex\right\vert\kern-0.25ex\right\vert}_{\lambda}}
\newcommand{\mfd}{\mathfrak{d}}
\newcommand{\mfL}{\mathscr{L}}
\newcommand{\mfS}{\mathfrak{S}}
\newcommand{\sfb}{\mathsf{b}}
\newcommand{\sfP}{\mathsf{P}}
\newcommand{\sfN}{\mathsf{N}}
\newcommand{\scrG}{\mathscr{G}}
\newcommand{\Pinf}{\sfP_{\!\!\infty}}
\newcommand{\interior}[1]{%
  {\text{int}(#1)}%
}
\def\Xint#1{\mathchoice
{\XXint\displaystyle\textstyle{#1}}%
{\XXint\textstyle\scriptstyle{#1}}%
{\XXint\scriptstyle\scriptscriptstyle{#1}}%
{\XXint\scriptscriptstyle\scriptscriptstyle{#1}}%
\!\int}
\def\XXint#1#2#3{{\setbox0=\hbox{$#1{#2#3}{\int}$ }
\vcenter{\hbox{$#2#3$ }}\kern-.55\wd0}}
\def\dashint{\Xint-}
\def\Xintfrac#1{\mathchoice
{\XXintfrac\displaystyle\textstyle{#1}}%
{\XXintfrac\textstyle\scriptstyle{#1}}%
{\XXintfrac\scriptstyle\scriptscriptstyle{#1}}%
{\XXintfrac\scriptscriptstyle\scriptscriptstyle{#1}}%
\!\int}
\def\XXintfrac#1#2#3{{\setbox0=\hbox{$#1{#2#3}{\int}$ }
\vcenter{\hbox{$#2#3$ }}\kern-.6\wd0}}
\def\dashintfrac{\Xintfrac-}
\DeclareMathOperator*{\argmin}{arg\,min}
    \DeclareMathOperator*{\argmax}{arg\,max}
\title[Stochastic convergence of a class of greedy-type algorithms]{Stochastic convergence of a class of greedy-type algorithms for Configuration Optimization Problems}
\author{E. Nielen, O. Tse}
\date{\today}
\begin{document}
\begin{abstract}
    Greedy Sampling Methods (GSMs) are widely used to construct approximate solutions of Configuration Optimization Problems (COPs), where a loss functional is minimized over finite configurations of points in a compact domain. While effective in practice, deterministic convergence analyses of greedy-type algorithms are often restrictive and difficult to verify.

    We propose a stochastic framework in which greedy-type methods are formulated as continuous-time Markov processes on the space of configurations. This viewpoint enables convergence analysis in expectation and in probability under mild structural assumptions on the error functional and the transition kernel. For global error functionals, we derive explicit convergence rates, including logarithmic, polynomial, and exponential decay, depending on an abstract improvement condition.

    As a pedagogical example, we study stochastic greedy sampling for one-dimensional piecewise linear interpolation and prove exponential convergence of the $L^1$-interpolation error for $C^2$-functions. Motivated by this analysis, we introduce the Randomized Polytope Division Method (R-PDM), a randomized variant of the classical Polytope Division Method, and demonstrate its effectiveness and variance reduction in numerical experiments.
\end{abstract}

\maketitle

\section{Introduction}
\label{sec: introduction}
Greedy Sampling Methods (GSMs) are used in many applications. These applications include function approximations (\cite{barron2008approximation, devore1996some}), reduced basis methods (\cite{chen2010certified, deparis2008reduced, huynh2007reduced, negri2013reduced, ngoc2005certified, rozza2011reduced, rozza2008reduced, rozza2007stability, rozza2009reduced, sen2008reduced, veroy2003posteriori}), interpolation (\cite{barrault2004, campagna2021greedy, chen2014weighted, maday2007general, schaback2014greedy}), and others (\cite{mirzasoleiman2015lazier, schaback2014greedy, wenzel2024data}). Many of these problems can be reformulated as Configuration Optimization Problems (COPs), where GSMs can be used to approximate solutions. Convergence guarantees for greedy methods can be hard to obtain in a deterministic setting and can be restrictive. In this work, we formulate Stochastic Greedy Sampling Methods and derive convergence results with high probability.

\subsection*{Configuration Optimization Problems}
GSMs are often applied to approximate solutions to COPs. In COPs, the objective is to minimize a loss function as a function of configurations of points within a given compact set $P\subset\R^d$. In this setting, a configuration $\eta$ with $n \in \N$ points in $P$ is an element of
\[
\Omega_n \coloneq \left\{\eta = (p_1, \ldots, p_n, \phi, \phi, \ldots)\, | \, p_i \in P \right\} \subset P^{\N}
\]
where $\phi$ is a so-called graveyard state.
The space of sequences is then given by the disjoint union
\[
    \Omega \coloneq \bigsqcup_{n \ge 0} \Omega_n.
\]
For each $n\in\N$, we further define the map 
\[
    \Omega_n\ni\eta\mapsto \Lambda_n(\eta) = \{p_1,\ldots,p_n\} \in \varGamma(P),\qquad \eta = (p_1, \ldots, p_n, \phi, \phi, \ldots),
\]
where $\varGamma(P) = \{ A\subset P: \#A<\infty\}$ denotes the family of finite subsets of $P$.

\medskip
The Configuration Optimization Problem then reads:
\begin{problem}
\label{problem: COP}
    Let $P \subset \R^d$ be a compact set and $\ell{:} \varGamma(P) \to [0, +\infty)$ be a given loss function. For a fixed $n \in \N$, we aim to find
    \begin{align}\label{problem:cop}\tag{\sf COP}
        \gamma \in \argmin_{\eta \in \Omega_n} \mfL(\eta),\qquad \mfL = \ell\circ\Lambda.
    \end{align}
\end{problem}


\begin{remark}
    We note that the definition of a COP differs from \cite{nielen2025polytope}. The setting presented here allows us to keep track of the order in which points are added, making the stochastic process considered below Markovian.
\end{remark}

The initial motivation for our study stems from the Reduced Basis Method (RBM) in the context of Model Order Reduction. In RBM, the aim is to approximate a solution manifold $\calM = \{u(p) \in \calV : p \in P\}$, where $u(p)$ is the solution of a PDE governed by the parameter $p$ in some Hilbert space $\calV$. Given a configuration $\eta\in\Omega_n$, a reduced basis is the set $\{u(p):p\in \Lambda_n(\eta)\}$, whose span $V_\eta$ is a linear space approximating the solution manifold $\calM$. The loss function could be given by
\begin{align*}
    \mfL(\eta) = \max_{q \in P} \norm{u(q) - \mathrm{Proj}_{V_\eta} u(q)}_\calV^2,
\end{align*}
where $\mathrm{Proj}_{V_\eta}$ is a projection operator onto the linear space $V_\eta$. Other examples can be found in the context of the Empirical Interpolation Method \cite{barrault2004}, Optimal Experimental Design \cite{ucinski2004optimal}, and active learning for regression \cite{wu2019active}. 

Since it is generally infeasible to find an exact solution to COPs, \emph{greedy methods} are employed to approximate a solution. These methods iteratively construct the configuration $\eta \in \Omega$. Generally, the methods is initiated with $\eta^1 = (p_1, \phi, \ldots) \in \Omega$, where $p_1 \in P$ arbitrarily chosen. In the next steps, the configuration $\eta^j$ is updated by selecting a new point $p_{j+1} \in P$ and setting $\eta^{j+1} = (p_1, \ldots, p_j, p_{j+1}, \phi, \ldots)$. The selection criteria of the point $p_{j+1}$ depends on the specific greedy method. Classically, for loss functions of the form $\mfL(\eta) = \max_{p \in P} \err(p, \eta)$, for some given error function $\err{:} P \times \Omega \to [0,+\infty)$, the idea is to select $p_{j+1}$ satisfying
\begin{align}
\label{eq: p update}
    p_{j+1} \in \argmax_{q \in P} \err(q, \eta^{j-1}).
\end{align}
In other words, we select the point in $P$ with the highest error value with the hope that this point estimates the best possible update. Since it is often infeasible to compute~\eqref{eq: p update} exactly, this greedy strategy is often replaced by 
\begin{align}
\label{eq: weak greedy}
    p_{j+1} \in \argmax_{q \in S} \err(q, \eta^{j-1}),
\end{align}
where $S \subset P$ is a discrete sample set. We refer to this strategy as {\it weak greedy sampling}.

\subsection*{Literary overview}
Other greedy strategies exist. In \cite{urban2014greedy}, the authors perform gradient descent for several starting points to approximate the global maximum argument~\eqref{eq: p update}. Many alternative methods revolve around the sample set in weak greedy sampling. The quality of the weak greedy strategy depends on the representativeness of the parameter set $P$ by the sample set $S$. In practical implementations, the sample size $|S|$ of the sample set $S$ suffers from the curse of dimensionality. Therefore, many alternative methods exist to overcome these scalability issues, often considering different sample sets $S^j$ after each update step. Examples include \cite{hesthaven2014efficient}, where irrelevant samples are removed from $S$, and new, possibly relevant, samples are added. In \cite{haasdonk2011training}, the sample set is also adaptively enriched based on the error within a validation set. In \cite{sen2008reduced}, the weak greedy algorithm is performed on several smaller, disjoint training sets, and in \cite{jiang2017offline}, the authors use the successive maximization method to construct a surrogate training set. In \cite{nielen2025polytope}, the Polytope Division Method (PDM) is introduced. In PDM, the parameter set $P$ is divided into polytopes, and the sample set consists of the barycenters of these polytopes. In this paper, we introduce a randomized version of this algorithm (R-PDM).

In the context of Reduced Basis Methods, the convergence of greedy methods is often investigated based on the Kolmogorov $n$-width \cite{binev2011convergence, cohen2020reduced}. The Kolmogorov $n$-width is defined as the $n$-dimensional linear space that minimizes the approximation error of solution manifold $\calM$ in the supremum norm. In \cite{binev2011convergence}, the authors show that the polynomial or exponential decay of the Kolmogorov $n$-width implies polynomial or exponential decay of the weak greedy algorithm, respectively, if it can be guaranteed that the error of the selected parameter is at least a factor $\gamma \in (0,1]$ of the exact maximum error. This maximum error is typically unknown, leading to an assumption that is hard to guarantee. Therefore, in \cite{cohen2020reduced}, they investigate the required sample size $|S|$ to guarantee this assumption is satisfied with high probability only for the cases where the solution map is analytic in the parameters. Not only can these assumptions be hard to guarantee, but the comparison to the Kolmogorov $n$-width can also be restrictive. Outliers in the solution manifold can dominate the convergence rates, and might present an overly cautious perspective.

A different view is presented in \cite{li2025new}, where the convergence estimates are compared with the metric entropy numbers. The metric entropy number represents the smallest radius necessary to cover a compact set with $2^n$ balls of this radius. This perspective leads to sharper bounds than the classical comparison to the Kolmogorov $n$-width, but still depends on the same assumptions. A different comparison is investigated in this paper, where we model greedy methods as stochastic processes and derive probabilistic convergence results.

\subsection*{Outline of paper}
Stochastic greedy methods form a broader class than deterministic greedy methods because deterministic methods can be recovered by setting the kernel $\lambda$ as Dirac measures. The benefits of re-framing greedy methods as stochastic processes are three-fold: 
\begin{enumerate}
    \item Convergence statements in probability and expectation are less restrictive than convergence statements in maximum error.
    \item Stochastic methods can be used to prove convergence in probability and expectation.
    \item The viewpoint of the stochastic process enables a broader view of greedy methods, and in particular, the introduction of the kernel $\lambda$ can lead to new greedy-type algorithms.
\end{enumerate}

To model greedy methods as stochastic processes, we consider a configuration $\eta$ that changes over time---a configuration $\eta = (\eta_1, \ldots, \eta_n, \phi, \ldots)$ transitions to the configuration 
\[
    \eta {\oplus} y = (y,\eta_1, \ldots, \eta_n, \phi, \ldots)\quad\text{with rate $\lambda(\eta, dy)$.}
\]
In Section~\ref{sec: stochastic process}, we detail the precise definition of this $\oplus$-operator and the generator of this process.

In Section~\ref{sec: convergence}, we show convergence of functions $\err{:} P \times \Omega \to [0,+\infty)$ without rates under a minimal set of assumptions (cf.\ Assumption~\ref{assumption: J}) and for a broad class of kernels $\lambda$. Since these functions depend also on points in $P$, we call them \emph{local} functions.

For \emph{global} functions $\mathscr{G}{:} \Omega \to [0,+\infty)$ such as the loss function $\mfL$ in \eqref{problem:cop}, we show logarithmic, polynomial, and even exponential convergence rates (cf.\ Theorem~\ref{thm: convergence rate average}) under more stringent assumptions on $\scrG$ and the kernel $\lambda$ (cf.\ Assumption~\ref{assumption: average error}) in Section~\ref{sec: average improvement}

In Section~\ref{sec: interpolation}, we consider piecewise linear interpolation as a pedagogical example of a COP. We show that the $L^1$ interpolation error of general $C^2$-functions converges exponentially using stochastic greedy methods with a transition kernel satisfying Property~\ref{assumption: interpolation transition kernel}.

In Section~\ref{sec: rpdm}, we introduce the Randomized Polytope Division Method (R-PDM) and numerically show convergence results for the three pedagogical examples. We further numerically investigate the reduction in variance of the R-PDM over the uniform kernel, highlighting an additional benefit of R-PDM. 



The main contributions of this paper are the following:
\begin{enumerate}
    \item We formulate greedy methods as stochastic processes.
    \item We derive convergence results with and without rates for these methods under a varied set of assumptions.
    \item We introduce the R-PDM and provide analytical and numerical studies of non-trivial pedagogical examples.
\end{enumerate}


\section{Configuration construction as a stochastic process}
\label{sec: stochastic process}
In this section, we model greedy-type algorithms as a stochastic process and prove its well-posedness under certain assumptions. The stochastic process models the selection of particles of a configuration, i.e., its state space is $\Omega \coloneq \bigsqcup_{n \ge 0} \Omega_n$, where 
\[
\Omega_n \coloneq \left\{\eta = (\eta_1, \ldots, \eta_n, \phi, \ldots)\, | \, \eta_i \in P \right\},
\]
and $\phi$ denotes a graveyard state.
%
Moreover, we consider the following metric on $\Omega$
    \begin{definition}
    \label{def: metric}
        Let $\eta, \sigma \in \Omega$. A metric $\mfd$ on $\Omega$ is defined as
        \begin{align*}
            \mfd(\eta, \sigma) = \sum_{i \in \N} \frac{1}{2^i} \bar{\mfd}(\eta_i, \sigma_i),
        \end{align*}
        where
        \begin{align*}
        \begin{cases}
            \;\bar{\mfd}(p, q) \coloneq |p - q|_2 \quad &\text{for} \, p, q \in P,\\
            \;\bar{\mfd}(p, q) \coloneq {\rm{diam}}(P) \quad &\text{for} \, p \in P, q = \phi, \;\text{or}\; p=\phi, q\in P,\\
            \;\bar{\mfd}(p, q) \coloneq 0 \quad &\text{for} \, p = q = \sigma.  
        \end{cases}
        \end{align*}
    \end{definition}
    We then equip $\Omega$ with the Borel $\sigma$-algebra $\calB_\Omega$ induced by the metric $\mfd$.


    \medskip
    As we mentioned ealier, a configuration $\eta$ transitions to a different configuration $\sigma$ by shifting its elements and adding a new point, expressed in terms of the $\oplus$-operator. 

    \begin{definition}
    The operator $\oplus{:} \Omega \times P \to \Omega$ is defined as
    \[
        \eta {\oplus} y \coloneq (y, \eta_1, \ldots, \eta_n, \phi, \ldots),\qquad \eta \in \Omega_n,\; y \in P.
    \]
    Moreover, we define the counting function $\sfN{:} \Omega \to \N$ as
\[
    \sfN(\eta) = n,\qquad \eta\in\Omega_n,
\]
counting the number of particles in a configuration $\eta$ that are elements of $P$.
    \end{definition}
    
    \begin{lemma}
        \label{lemma: compactness omega}
        The metric space $(\Omega,\mfd)$ is compact. Moreover, the maps $\oplus$ and $\sfN$ are continuous and, therefore, Borel measurable.
    \end{lemma}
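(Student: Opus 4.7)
The plan is to realize $\Omega$ as a subset of the infinite product $X := (P \cup \{\phi\})^{\N}$ and to import compactness and continuity from the product structure. Because $\bar\mfd$ is bounded by $\mathrm{diam}(P)$ and the weights $2^{-i}$ are summable, the series defining $\mfd$ converges uniformly on $X$ and metrizes the product topology there. Since $P$ is compact and $\phi$ is an isolated point at distance $\mathrm{diam}(P) > 0$ from every element of $P$, the alphabet $(P \cup \{\phi\}, \bar\mfd)$ is compact, and Tychonoff (equivalently, a direct coordinatewise diagonal extraction) yields compactness of $X$. To conclude compactness of $(\Omega, \mfd)$, I would verify sequential closedness directly: given $\eta^{(k)} \in \Omega$, extract a subsequence along which every coordinate converges in $(P \cup \{\phi\}, \bar\mfd)$; the isolation of $\phi$ forces each limit coordinate to be either a point of $P$ or equal to $\phi$, and the block structure ``entries of $P$ followed by $\phi$'s'' propagates to the limit, placing it back in $\Omega$.

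For continuity of $\oplus$, I would plug the identities $(\eta \oplus y)_1 = y$ and $(\eta \oplus y)_i = \eta_{i-1}$ for $i \ge 2$ directly into the defining series to obtain the joint Lipschitz identity
\[
\mfd(\eta \oplus y,\, \sigma \oplus z) = \tfrac{1}{2}\bar\mfd(y, z) + \tfrac{1}{2}\mfd(\eta, \sigma),
\]
with the factor $\tfrac{1}{2}$ on the right arising from the index shift. For continuity of $\sfN$, the same separation between $P$ and $\phi$ is the workhorse: if $\eta^{(k)} \to \eta$ with $\eta \in \Omega_n$, then $\eta_n \in P$ and $\eta_{n+1} = \phi$ force $\eta_n^{(k)} \in P$ and $\eta_{n+1}^{(k)} = \phi$ for all sufficiently large $k$, hence $\sfN(\eta^{(k)}) = n$ eventually. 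Borel measurability then follows at once from continuity.

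The step I expect to require the most care is the sequential-closedness argument in the compactness proof, specifically the case $\sfN(\eta^{(k)}) \to \infty$: the coordinatewise limit then has every entry in $P$, which is not an element of $\bigsqcup_{n \ge 0} \Omega_n$ under a strict reading of the definitions. I would treat this by admitting such ``full'' configurations as a limiting stratum of $\Omega$ (i.e.\ working implicitly in the closure of $\Omega$ in $X$), after which the argument closes cleanly and the remaining estimates reduce to routine bookkeeping with the weighted series.
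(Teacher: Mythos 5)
Your continuity arguments are correct and in places cleaner than the paper's. The identity $\mfd(\eta\oplus y,\sigma\oplus z)=\tfrac12\bar{\mfd}(y,z)+\tfrac12\mfd(\eta,\sigma)$ is exact (the prepended coordinate carries weight $2^{-1}$ and the index shift rescales the remaining series by $\tfrac12$), so $\oplus$ is jointly Lipschitz; the paper instead restricts to a ball on which $\sfN(\gamma)=\sfN(\eta)$ and estimates term by term (its displayed formula even puts the new point at position $\sfN(\eta)+1$ rather than at the front, which your identity sidesteps). Your argument for $\sfN$ --- the isolation of $\phi$ at distance $\mathrm{diam}(P)$ makes $\sfN$ locally constant --- is essentially the paper's. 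For compactness your route is genuinely different: you embed $\Omega$ into $X=(P\cup\{\phi\})^\N$, observe that $\mfd$ metrizes the product topology there, get compactness of $X$ from Tychonoff or a diagonal extraction, and reduce to closedness of $\Omega$ in $X$, whereas the paper proves completeness and total boundedness of $(\Omega,\mfd)$ by hand. Both strategies are legitimate; yours outsources the extraction, the paper's is more self-contained.

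The point you flagged at the end is, however, a real obstruction and not a bookkeeping issue: with $\Omega=\bigsqcup_{n\ge0}\Omega_n$ consisting of finite configurations only, $\Omega$ is \emph{not} closed in $X$. Concretely, let $\eta^{(k)}=(p,\dots,p,\phi,\dots)$ with $k$ copies of a fixed $p\in P$; this sequence is $\mfd$-Cauchy (the tail bound $\sum_{i>k}2^{-i}\,\mathrm{diam}(P)$ controls the distance between $\eta^{(k)}$ and $\eta^{(m)}$ for $m\ge k$), but its only candidate limit is the full sequence $(p,p,\dots)$, which lies in no $\Omega_n$. So $(\Omega,\mfd)$ is totally bounded but not complete, and your proposed remedy --- adjoining the ``infinite'' stratum, i.e.\ proving compactness of the closure of $\Omega$ in $X$ --- establishes a correct statement about a strictly larger space rather than the lemma as written; one must then also extend $\sfN$, $\oplus$, and the later constructions to that stratum. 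Be aware that the paper's own completeness argument has exactly the same hole, concealed in the unjustified sentence ``Then $\eta^*\in\Omega$'': for the sequence above the coordinatewise limit has infinitely many entries in $P$. So your diagnosis is accurate, and the clean repair is the one you suggest (enlarge the state space to the closure), not a sharper closedness argument, which cannot succeed for $\Omega$ as currently defined.
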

    The proof of this lemma can be found in Appendix~\ref{app: compactness}.

    \medskip
    To characterize the possible transition over time, we now introduce the generator of the process. Let $B_b(\Omega)$ be the set of bounded Borel functions on $\Omega$. The generator $L{:} B_b(\Omega) \to B_b(\Omega)$ of the process is then given by
    \begin{align}
    \label{eq: generator}
    LF(\eta) \coloneq \int_{P} \bigl[F(\eta {\oplus} y) - F(\eta)\bigr] \lambda(\eta, dy).
    \end{align}
    Here, $F \in B_b(\Omega)$ is an observable and $\lambda$ is the transition kernel. In the context of Greedy Sampling Methods, an example of $F(\eta)$ could be the error function $\err(q, \eta)$ at some fixed point $q \in P$.
    Given this generator, we assume that the sequence $\eta$ transitions to $\eta {\oplus} y$ at the rate $\lambda(\eta, dy)$.
    A simple example of $\lambda(\eta, dy)$ is a uniform measure over $P$. 
    
    In Section~\ref{sec: rpdm}, we introduce the Randomized Polytope Division Method (R-PDM) and construct a transition kernel $\lambda$ such that the process corresponds to the construction of configurations based on R-PDM. We always assume an initial condition of the form $\eta_0 = (\eta_0, \phi, \ldots)$ for some $\eta_0 \in P$.

\medskip
Throughout, we assume that the transition kernel $\lambda$ satisfies 
\begin{assumption}
\label{assumption: rate function}
The transition kernel $\lambda{:}\Omega\times \calB_{\R^d}\to [0,+\infty)$ satisfies
\begin{enumerate}
    \item $\lambda(\eta,\cdot)\in\calP(P)$,
    \item for any $A\in\calB_{\R^d}$, the map $\Omega\ni \eta \mapsto \lambda(\eta,A)$ is Borel measurable,
    \item $\lambda(\eta, P) = 1$ for every $\eta\in\Omega$.
\end{enumerate}
    
\end{assumption}

\begin{remark}
    We note that the results here may be generalized to the case were the transition kernel satisfies $\sup_{\eta\in\Omega} \lambda(\eta, P) <+\infty$ without any difficulties.
\end{remark}

Under Assumption~\ref{assumption: rate function} we have the following existence result, which follows from \cite[\S4.2]{ethier2009markov} upon showing that $\lambda$ gives rise to is a well-defined transition kernel $\kappa{:}\Omega\times \calB_\Omega\to [0,+\infty)$. For completeness, the proof of this statement is found in Appendix~\ref{app: well-definiteness}.

\begin{proposition}
    Let $\lambda$ be a transition kernel satisfying Assumption~\ref{assumption: rate function}. Then there exists a unique $\Omega$-valued Markov process $(\eta_t)_{t\ge 0}$ with bounded generator $L{:}B_b(\Omega)\to B_b(\Omega)$ defined in \eqref{eq: generator}.
\end{proposition}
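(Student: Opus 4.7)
\emph{Strategy.} The plan is to reduce the statement to the standard existence and uniqueness theorem for bounded-generator pure-jump Markov processes (e.g.\ \cite[\S4.2]{ethier2009markov}). That theorem requires a bona fide transition kernel $\kappa{:}\Omega\times\calB_\Omega\to[0,1]$, whereas we are given only the kernel $\lambda{:}\Omega\times\calB_{\R^d}\to[0,1]$ together with the insertion map $\oplus$. I would therefore define
\[
    \kappa(\eta,A) \coloneq \lambda\bigl(\eta,\{y\in P:\eta\oplus y\in A\}\bigr), \qquad \eta\in\Omega,\ A\in\calB_\Omega,
\]
i.e.\ the pushforward of $\lambda(\eta,\cdot)$ under $y\mapsto \eta\oplus y$. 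Using Assumption~\ref{assumption: rate function}(iii), the generator \eqref{eq: generator} rewrites as $LF(\eta)=\int_\Omega [F(\sigma)-F(\eta)]\,\kappa(\eta,d\sigma)$, which is the generic form of a bounded jump generator with uniform total rate $\kappa(\eta,\Omega)=\lambda(\eta,P)=1$.

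\emph{Verification that $\kappa$ is a transition kernel.} For each fixed $\eta$, $\kappa(\eta,\cdot)$ is a Borel probability measure on $\Omega$: by Lemma~\ref{lemma: compactness omega}, $y\mapsto \eta\oplus y$ is continuous, hence Borel measurable, and the pushforward of a probability measure through a measurable map is again a probability measure. The nontrivial point is the measurability of $\eta\mapsto \kappa(\eta,A)$ for every $A\in\calB_\Omega$. I would argue via a monotone class argument: set
\[
    \calH \coloneq \{\, A\in\calB_\Omega : \eta\mapsto\kappa(\eta,A) \text{ is Borel measurable} \,\}.
\]
Since $(\eta,y)\mapsto \eta\oplus y$ is jointly continuous on $\Omega\times P$, $(\eta,y)\mapsto \mathbbm{1}_U(\eta\oplus y)$ is Borel for every open $U\subset\Omega$; combining this with Assumption~\ref{assumption: rate function}(ii) and the standard fact that $\eta\mapsto \int_P f(\eta,y)\,\lambda(\eta,dy)$ is Borel whenever $f$ is bounded and $\calB_\Omega\otimes\calB_P$-measurable gives $U\in\calH$. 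As $\calH$ is a Dynkin system containing the open sets, $\calH=\calB_\Omega$.

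\emph{Conclusion and main obstacle.} Once $\kappa$ is a transition kernel, the generator $L$ is a bounded operator on $B_b(\Omega)$ with $\|L\|\le 2$, so $e^{tL}$ is defined by a norm-convergent power series. The construction of \cite[\S4.2]{ethier2009markov} then produces the desired $\Omega$-valued Markov process explicitly: starting from $\eta_0$, wait an exponential time of rate $1$, jump according to $\kappa(\eta,\cdot)$, and iterate; uniform boundedness of the rate rules out explosion in finite time. Uniqueness in law follows because a bounded generator uniquely determines its semigroup (and hence the finite-dimensional distributions) via Hille--Yosida. I expect the main obstacle to be the measurability step for $\kappa(\cdot,A)$: the set $\{y:\eta\oplus y\in A\}$ depends on $\eta$ through the shift in all prior coordinates, so without continuity of $\oplus$ from Lemma~\ref{lemma: compactness omega} the monotone class argument would not even start.
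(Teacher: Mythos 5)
Your proposal is correct and follows essentially the same route as the paper: define $\kappa(\eta,A)=\int_P\mathbf{1}_A(\eta{\oplus}y)\,\lambda(\eta,dy)$ (the pushforward of $\lambda(\eta,\cdot)$ under $y\mapsto\eta{\oplus}y$), verify via the measurability/continuity of $\oplus$ from Lemma~\ref{lemma: compactness omega} and a monotone class argument that $\kappa$ is a bounded transition kernel with $\kappa(\eta,\Omega)=1$, and then invoke \cite[\S4.2]{ethier2009markov} for existence and uniqueness of the jump process with bounded generator. The only cosmetic difference is that the paper checks countable additivity of $\kappa(\eta,\cdot)$ by hand (monotone convergence plus continuity from below of the Dirac measures), whereas you obtain it directly as a pushforward of a probability measure, which is equally valid.
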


Since $L$ is a generator of the process $(\eta_t)_{t\ge 0}$, the time marginal law $\sfP_t=\text{Law}(\eta_t) \in \calP(\Omega)$  satisfies the forward Kolmogorov equation
\begin{align}
\label{eq: forward kolmogorov}\tag{\textsf{FKE}}
\langle F, \sfP_t\rangle - \langle F, \sfP_s \rangle = \int_s^t \langle LF, \sfP_r\rangle \,dr, \quad \text{for all}\, F \in B_b(\Omega),
\end{align}
where $\langle F, \sfP\rangle \coloneq \int_\Omega F(\eta) \sfP(d\eta)$.

\medskip
We define the total variation norm on $\calP(\Omega)$ as follows
\begin{definition}
\label{def: total variation}
For any $\sfP, \mathsf{Q} \in \calP(\Omega)$, the total variation norm is defined by
\begin{align*}
\norm{\sfP - \mathsf{Q}}_{\rm{TV}} \coloneq \sup \Bigl\{ |\langle F, \sfP\rangle - \langle F, \mathsf{Q}\rangle| : F \in B_b(\Omega),\; \|F\|_\infty \le 1 \Bigr\}.
\end{align*}
\end{definition}

Since the forward Kolmogorov equation \eqref{eq: forward kolmogorov} holds for all $F \in B_b(\Omega)$, we can investigate what happens for specific observables $F$, e.g., $F \coloneq \err(q, \eta)$ for some fixed $q\in P$. In the next section, we use this strategy to obtain convergence results for local functions.

\section{Convergence: Local Functions}
\label{sec: convergence}
This section shows that the error function $\err{:} P \times \Omega \to \R^+$ converges to zero almost everywhere in the large-time limit. We make the following assumption on the local error function:
\begin{assumption}
\label{assumption: J}
The local error function $\err{:} P \times \Omega \to [0,+\infty)$ satisfies the following:
    \begin{enumerate}
    \item ({\it Boundedness}) There exists a $c_0>0$ such that $\err(p, \eta) \le c_0$ for all $(\eta,p) \in \Omega\times P$.
    \item ({\it Monotonicity}) For every $(\eta,p,y) \in \Omega\times P\times P$, it holds that
    \begin{align*}
        \err(p, \eta {\oplus} y) \le \err(p, \eta).
    \end{align*}
    \item ({\it Consistency}) For every $\eta=(\eta_1,\ldots,\eta_n,\phi,\ldots)\in \Omega_n$,
    \[
        \err(\eta_i, \eta) = 0,\qquad i=1,\ldots,n.
    \]
    \item ({\it Regularity}) For any $\eta\in\Omega$, the map $p\mapsto \err(p, \eta)$ is Lipschitz continuous with Lipschitz constant $L_\err$, independent of $\eta$.
\end{enumerate}
\end{assumption}

\begin{remark}
    In many practical cases, the local error function $\err(p, \cdot)$ is invariant under permutations of the points, i.e., $\err(p, \eta) = \jmath(p, \Lambda(\eta))$ for some function $\jmath{:} P \times \varGamma(P) \to [0, +\infty)$.
\end{remark}

In Theorem~\ref{theorem: convergence J}, we formulate the main statement of this section. Lemma~\ref{lemma: bounded integral} and Lemma~\ref{lemma: uniform continuity} are stepping stones to prove Theorem~\ref{theorem: convergence J}.

\medskip
The main statement of this section is:
\begin{theorem}\label{theorem: convergence J}
    Let $\err{:} P \times \Omega \to [0,+\infty)$ be a local error function satisfying Assumption~\ref{assumption: J}. Then,
    \[
        \lim_{t\to\infty} \int_\Omega |LJ(p,\eta)|\,\sfP_t(d\eta) = 0\qquad\text{for every $p\in P$.}
    \]
    In particular, if $\eta\mapsto LJ(p,\eta)$ is lower semicontinuous for every $p\in P$, then every accumulation point $\sfP_\infty$ of $(\sfP_t)_{t\ge 0}\subset \calP(\Omega)$ in the narrow topology satisfies
    \[
       \text{$\err(y, \eta) = 0$\;\; for\;\; $\lambda(\eta, dy) \Pinf(d\eta)$-almost every $(y, \eta) \in P \times \Omega$.}
    \]
\end{theorem}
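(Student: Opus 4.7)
The plan is to first establish the integral convergence via an energy--dissipation argument, and then use the lower semicontinuity hypothesis together with a finite covering in $p$ to extract the pointwise statement about $\err(y,\eta)$.

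For the first claim, fix $p\in P$ and apply the forward Kolmogorov equation \eqref{eq: forward kolmogorov} to the bounded observable $F(\eta)=\err(p,\eta)$. By monotonicity (Assumption~\ref{assumption: J}(2)), the integrand defining $L\err(p,\eta)$ is non-positive, so $|L\err(p,\eta)|=-L\err(p,\eta)$, and \eqref{eq: forward kolmogorov} yields
\[
\int_0^t \int_\Omega |L\err(p,\eta)|\,\sfP_r(d\eta)\,dr
= \langle \err(p,\cdot),\sfP_0\rangle - \langle \err(p,\cdot),\sfP_t\rangle \le c_0,
\]
which is the role I anticipate for Lemma~\ref{lemma: bounded integral}. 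To upgrade integrability on $[0,\infty)$ to pointwise decay, I would invoke Lemma~\ref{lemma: uniform continuity} to establish uniform continuity in $t$ of $g(t):=\int_\Omega |L\err(p,\cdot)|\,d\sfP_t$. This follows by applying \eqref{eq: forward kolmogorov} to the bounded observable $L\err(p,\cdot)\in B_b(\Omega)$ and noting $\|L^2\err(p,\cdot)\|_\infty\le 2\|L\err(p,\cdot)\|_\infty\le 2c_0$, so $g$ is Lipschitz; a nonnegative Lipschitz function integrable on $[0,\infty)$ must decay to zero.

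For the second statement, fix $p\in P$ and a narrow accumulation point $\sfP_\infty=\lim_k \sfP_{t_k}$. Since $-L\err(p,\cdot)\ge 0$ is lower semicontinuous by hypothesis, Portmanteau's theorem gives
\[
\int_\Omega |L\err(p,\eta)|\,\sfP_\infty(d\eta)\le \liminf_{k\to\infty}\int_\Omega |L\err(p,\eta)|\,\sfP_{t_k}(d\eta)=0,
\]
so for every fixed $p\in P$, $|L\err(p,\cdot)|=0$ holds $\sfP_\infty$-a.e.\ (with an $\eta$-null set depending on $p$). To couple this fiberwise information with the diagonal value $\err(y,\eta)$, I would fix $\epsilon>0$, choose a finite $\epsilon$-net $\{p_i\}_{i=1}^{N_\epsilon}$ of $P$, and disjointify the associated cover into $\{A_i\}$ with $A_i\subset B(p_i,\epsilon)$. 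For $y\in A_i$, consistency gives $\err(y,\eta\oplus y)=0$, and combining Lipschitz regularity (Assumption~\ref{assumption: J}(4)) applied to both $\err(\cdot,\eta)$ and $\err(\cdot,\eta\oplus y)$ with the triangle inequality yields
\[
\err(y,\eta)=\err(y,\eta)-\err(y,\eta\oplus y)\le \bigl[\err(p_i,\eta)-\err(p_i,\eta\oplus y)\bigr]+2L_\err\epsilon.
\]
Integrating in $y$ against $\lambda(\eta,\cdot)$ and using monotonicity to extend each $A_i$-integral to all of $P$ produces
\[
\int_P \err(y,\eta)\,\lambda(\eta,dy)\le 2L_\err\epsilon+\sum_{i=1}^{N_\epsilon}|L\err(p_i,\eta)|.
\]
Integrating against $\sfP_\infty$ and using the previous vanishing at each $p_i$ bounds the left-hand side by $2L_\err\epsilon$; letting $\epsilon\to 0$ and recalling $\err\ge 0$ gives $\err(y,\eta)=0$ for $\lambda(\eta,dy)\Pinf(d\eta)$-a.e.\ $(y,\eta)$.

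The main obstacle is this final aggregation step: converting the fiberwise information $|L\err(p,\cdot)|=0$ (whose $\eta$-null set depends on $p$) into the diagonal statement on $(y,\eta)$ along the transition kernel. The uniform Lipschitz control in $p$ is essential to make the finite-cover error $2L_\err\epsilon$ independent of $\eta$, while the interplay between consistency (which makes $\err$ vanish at configuration points) and monotonicity (which forces $L\err\le 0$) is precisely what allows the observation at a single $p_i$ to witness the diagonal decrement $\err(y,\eta)$.
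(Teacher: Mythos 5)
Your proposal is correct and follows the paper's skeleton: the first claim is obtained exactly as in the paper via Lemma~\ref{lemma: bounded integral} (FKE applied to $F_p=\err(p,\cdot)$, with $(L\err(p,\cdot))^+=0$ by monotonicity) and Lemma~\ref{lemma: uniform continuity} (FKE applied to the bounded observable $L\err(p,\cdot)$, giving a Lipschitz bound), followed by a Barbalat-type argument, and the accumulation-point step is the same Portmanteau/liminf inequality yielding $\int_\Omega|L\err(p,\eta)|\,\sfP_\infty(d\eta)=0$ for each fixed $p$. The only place you genuinely diverge is the final diagonalization: the paper compresses it into ``by Assumption~\ref{assumption: J}(4)\ldots in particular for $p=y$,'' whereas you make the passage from the $p$-dependent null sets to the statement along the diagonal explicit via a finite $\eps$-net, the two-sided Lipschitz estimate $\err(y,\eta)-\err(y,\eta{\oplus}y)\le[\err(p_i,\eta)-\err(p_i,\eta{\oplus}y)]+2L_\err\eps$, and monotonicity to enlarge each cell integral to $-L\err(p_i,\eta)$; this is a legitimate (and arguably more transparent) rendering of the step the paper leaves implicit, and it correctly identifies why consistency, monotonicity and the uniform Lipschitz constant are all needed there. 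One small caveat, which applies equally to the paper's own wording: the liminf inequality requires $|L\err(p,\cdot)|=(L\err(p,\cdot))^-$ to be lower semicontinuous, i.e.\ $L\err(p,\cdot)$ upper semicontinuous, so your phrase ``$-L\err(p,\cdot)$ is lower semicontinuous by hypothesis'' matches what is actually used rather than the literal hypothesis as stated in the theorem.
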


\begin{remark} In particular, if $\lambda(\eta, \cdot)$ is equivalent to the Lebesgue measure $\calL$, i.e. $\lambda(\eta, \cdot) \ll \calL$ and $\lambda(\eta, \cdot) \gg \calL$ for every $\eta \in \Omega$, then the result of Theorem~\ref{theorem: convergence J} implies $\err(y, \eta) = 0$ for $\calL \otimes \Pinf$-almost every $(y, \eta) \in P \times \Omega$.
\end{remark}

The following lemmas provide stepping stones to proving the first statement in Theorem~\ref{theorem: convergence J}. 

\begin{lemma}
\label{lemma: bounded integral}
Let $(\sfP_t)_{t \ge 0}$ be a solution to the Forward Kolmogorov equation. Then,
\begin{align*}
    \int_0^{\infty} \langle (LJ(p,\cdot))^-, \sfP_r\rangle \,dr \le c_0\qquad\text{for every $p\in P$}.
\end{align*}
Here, $(LF)^-$ denotes the negative part of $(LF)$, i.e., $(LF)^-(\eta) \coloneq |\min\{0, LF(\eta)\}|$. 
\end{lemma}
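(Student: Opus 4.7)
The plan is to exploit the monotonicity assumption together with the forward Kolmogorov equation applied to the observable $F(\eta) \coloneq J(p,\eta)$ for fixed $p \in P$. First I would observe that monotonicity implies $J(p, \eta \oplus y) - J(p, \eta) \le 0$ for every $y \in P$, so integrating against the probability measure $\lambda(\eta, \cdot)$ gives $LJ(p,\eta) \le 0$ for every $\eta \in \Omega$. In particular, $(LJ(p,\cdot))^- = -LJ(p,\cdot) = |LJ(p,\cdot)|$.

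Next I would apply \eqref{eq: forward kolmogorov} with $F = J(p,\cdot)$; this is a bounded Borel function on $\Omega$ by the boundedness in Assumption~\ref{assumption: J} (and the standing measurability hypothesis on $J$), so the equation yields
\begin{align*}
    \langle J(p,\cdot), \sfP_t\rangle - \langle J(p,\cdot), \sfP_0\rangle = \int_0^t \langle LJ(p,\cdot), \sfP_r\rangle\,dr = -\int_0^t \langle (LJ(p,\cdot))^-, \sfP_r\rangle\,dr.
\end{align*}
Rearranging and using $J(p,\cdot) \ge 0$ together with the bound $J(p,\cdot) \le c_0$ from boundedness, we get
\begin{align*}
    \int_0^t \langle (LJ(p,\cdot))^-, \sfP_r\rangle\,dr = \langle J(p,\cdot), \sfP_0\rangle - \langle J(p,\cdot), \sfP_t\rangle \le \langle J(p,\cdot), \sfP_0\rangle \le c_0.
\end{align*}
Since the integrand $\langle (LJ(p,\cdot))^-, \sfP_r\rangle$ is non-negative, the monotone convergence theorem permits passing to the limit $t\to\infty$, yielding the claim.

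The argument is essentially a telescoping/energy estimate, and there is no serious obstacle once one has recognized that monotonicity forces $LJ(p,\cdot)$ to have a fixed sign. The only point that deserves a brief word is ensuring $J(p,\cdot) \in B_b(\Omega)$ so that the forward Kolmogorov equation \eqref{eq: forward kolmogorov} may legitimately be invoked; boundedness is immediate from item~(1) of Assumption~\ref{assumption: J}, and Borel measurability in $\eta$ is a standing structural hypothesis (indeed, whenever $J$ depends on $\eta$ only through the finite set $\Lambda(\eta)$, as noted in the remark following Assumption~\ref{assumption: J}, measurability follows from continuity of $\Lambda_n$ for each $n$).
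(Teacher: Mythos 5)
Your proposal is correct and follows essentially the same route as the paper: apply the forward Kolmogorov equation to $F=\err(p,\cdot)$, use monotonicity to see that $L\err(p,\cdot)\le 0$ so only the negative part contributes, bound the telescoped difference by $\E[\err(p,\eta_0)]\le c_0$ via non-negativity and boundedness, and let $t\to\infty$. Your extra care about $\err(p,\cdot)\in B_b(\Omega)$ and the explicit monotone-convergence step are fine refinements of the same argument.
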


\begin{proof}
Since $(\sfP_t)_{t \ge 0}$ solves the Forward Kolmogorov equation, we have for $F_p(\eta) \coloneq \err(p, \eta)$ that
\begin{align*}
\E[F_p(\eta_t)] - \E[F_p(\eta_0)] &= \int_0^t\!\! \int_{\Omega} LF_p(\eta)\, \sfP_s(d\eta)\, ds,\\
&=\int_0^t\!\! \int_{\Omega} \Bigl[(LF_p(\eta))^{+} - (LF_p(\eta))^{-}\Bigr] \sfP_s(d\eta)\, ds,\\
&= - \int_0^t\!\! \int_{\Omega} (LF_p(\eta))^{-} \sfP_s(d\eta)\, ds.
\end{align*}
This last step follows from Assumption~\ref{assumption: J}(4). Hence, we conclude that
\begin{align*}
0 \le \int_0^t\!\! \int_{\Omega} (LF_p(\eta))^{-} \sfP_s(d\eta)\,ds \le \E[F_p(\eta_0)] \le c_0.
\end{align*}
The statement follows after sending $t$ to infinity.
\end{proof}

\begin{lemma}
\label{lemma: uniform continuity}
    The map $t \mapsto \int_{\Omega} (L\err(p, \eta))^{-} \sfP_t(d\eta)$ is uniformly continuous for any $p\in P$.
\end{lemma}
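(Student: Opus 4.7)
The plan is to prove the stronger statement that $t\mapsto \int_\Omega (L\err(p,\eta))^-\,\sfP_t(d\eta)$ is actually Lipschitz in $t$, from which uniform continuity is immediate. The main idea is to re-apply the forward Kolmogorov equation~\eqref{eq: forward kolmogorov}, this time with the bounded Borel observable $g(\eta)\coloneq (L\err(p,\eta))^-$ itself.

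The first step is to verify that $g\in B_b(\Omega)$. The monotonicity in Assumption~\ref{assumption: J}(2) forces $\err(p,\eta\oplus y)-\err(p,\eta)\le 0$ for every $y\in P$, hence $L\err(p,\eta)\le 0$ pointwise, so that
\[
g(\eta) = -L\err(p,\eta) = \err(p,\eta) - \int_P \err(p,\eta\oplus y)\,\lambda(\eta,dy).
\]
Both summands are bounded by $c_0$ via Assumption~\ref{assumption: J}(1), giving $\|g\|_\infty\le 2c_0$. Borel measurability of $g$ follows from the fact that, under Assumption~\ref{assumption: rate function}, the bounded generator $L$ sends $B_b(\Omega)$ to $B_b(\Omega)$; this is applied to the observable $F_p(\eta)=\err(p,\eta)$, the same one already used in the proof of Lemma~\ref{lemma: bounded integral}.

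With $g\in B_b(\Omega)$ in hand, I would feed $F=g$ into~\eqref{eq: forward kolmogorov} to obtain, for every $0\le s\le t$,
\[
\langle g,\sfP_t\rangle - \langle g,\sfP_s\rangle = \int_s^t \langle Lg, \sfP_r\rangle\,dr,
\]
and then use the crude uniform bound $|Lg(\eta)|\le 2\|g\|_\infty\le 4c_0$ to estimate the right-hand side by $4c_0(t-s)$. This yields Lipschitz continuity of $t\mapsto\langle g,\sfP_t\rangle$ with constant $4c_0$, and hence uniform continuity on $[0,\infty)$. There is no substantive obstacle in this plan---the bounded-generator framework already does all the work---so the only bookkeeping step is the measurability of $g$, which is settled by invariance of $B_b(\Omega)$ under $L$.
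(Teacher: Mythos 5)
Your proof is correct and follows essentially the same route as the paper: apply the forward Kolmogorov equation to the observable $(L\err(p,\cdot))^-\in B_b(\Omega)$ and use the boundedness of the generator to get a Lipschitz (hence uniformly continuous) time map. The only difference is cosmetic: since $(L\err(p,\cdot))^- = \err(p,\cdot)-\int_P \err(p,\cdot\,{\oplus}\,y)\,\lambda(\cdot,dy)\le c_0$ by nonnegativity of $\err$, the sharper bound $\|(L\err(p,\cdot))^-\|_\infty\le c_0$ gives the paper's Lipschitz constant $2c_0$ instead of your $4c_0$, which does not affect the conclusion.
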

\begin{proof}
    As before, we set $F_p(\eta) \coloneq \err(p, \eta)$, $\eta\in\Omega$. Then
    \[
        -2c_0 \le L((L\err(p, \eta))^{-}) \le 2c_0\qquad\text{for every $\eta\in\Omega$}.
    \]
    Hence, 
    \[
        \left|\int_s^t \langle L((L\err(p, \eta))^{-}),\sfP_r\rangle\,dr\right| \le 2c_0|t-s|,
    \]
    therewith implying the differentiability of the map $t \mapsto \int_{\Omega} (L\err(p, \eta))^{-} \sfP_t(d\eta)$ with
    \[
        \left|\frac{d}{dt}\int_{\Omega} (L\err(p, \eta))^{-} \sfP_t(d\eta)\right| \le 2c_0,
    \]
    allowing us to conclude that it is uniformly continuous.
\end{proof}

A consequence of Lemmas~\ref{lemma: bounded integral} and \ref{lemma: uniform continuity} one may then conclude that $\lim_{t \to \infty} \langle (LJ(q,\cdot))^-, \sfP_t \rangle = 0$. On the other hand, the compactness of $\Omega$ implies that any family of probability measures in $\calP(\Omega)$ is tight, thus asserting the existence of accumulation points for the sequence $(\sfP_t)_{t\ge 0}\subset \calP(\Omega)$.


Now we are in a position to prove Theorem~\ref{theorem: convergence J}.

\begin{proof}[Proof of Theorem~\ref{theorem: convergence J}]
As mentioned, Lemmas~\ref{lemma: bounded integral} and \ref{lemma: uniform continuity} allows us to conclude that \cite{kelman1960conditions}
\[
    \lim_{t\to\infty} \int_\Omega |LJ(q,\eta)| \,\sfP_t(d\eta) = 0\qquad\text{for every $p\in P$},
\]
where we used the fact that $(LJ(q,\eta))^+=0$ for every $\eta\in\Omega$.

As for the second part, we consider any accummulation point $\sfP_\infty\in\calP(\Omega)$ and a subsequence $(\sfP_{t_n})_{n\ge 1}$ with $t_n\to \infty$ as $n\to\infty$ such that $\sfP_{t_n}\rightharpoonup \sfP_\infty$. Since $\eta\mapsto LJ(q,\eta)$ is assumed to be lower semicontinuous, we conclude that
\[
    \int_\Omega (LJ(q,\eta))^-\,\sfP_\infty(d\eta) \le \liminf_{n\to\infty} \int_\Omega (LJ(q,\eta))^- \,\sfP_{t_n}(d\eta) = 0.
\]
By Assumption~\ref{assumption: J}(4), we then deduce that
\begin{align*}
    \err(p, \eta) = J(p, \eta {\oplus} y) \qquad\text{for $\lambda(\eta, dy)\Pinf(d\eta)$-almost every $(y, \eta) \in P \times \Omega$.}
\end{align*}
In particular, for $p = y$, Assumption~\ref{assumption: J}(2) gives
\begin{align*}
    0 = J(y, \eta {\oplus} y) = J(y, \eta)\qquad\text{for $\lambda(\eta, dy)\Pinf(d\eta)$-almost every $(y, \eta) \in P \times \Omega$},
\end{align*}
therewith concluding the proof.
\end{proof}

In this following section, we derive convergence results for the class of global error functions, $\scrG{:} \Omega \to \R^+$, and derive convergence rates.


\section{Convergence: Global Functions}
\label{sec: average improvement}
This section considers a class of global functions $\scrG{:} \Omega \to [0,+\infty)$. We replace Assumption~\ref{assumption: J} with the following assumption
\begin{assumption}
    \label{assumption: average error}
    Let $\scrG{:} \Omega \to [0,+\infty)$, then $\scrG$ satisfies:
    \begin{enumerate}
        \item ({\it Boundedness}) \label{item: boundedness} There exists a $c_0 \in [0,+\infty)$ such that $\scrG(\eta) \le c_0$ for all $\eta \in \Omega$.
        \item ({\it Monotonicity}) \label{item: monotonicity} For every $y \in P$, and $\eta \in \Omega$, it holds that 
        \begin{align*}
            \scrG(\eta {\oplus} y)\le \scrG(\eta).
        \end{align*}
        \item \label{item: saturation} ({\it Saturation property}) For any $\eta \in \Omega$,
        \[
         \text{$\scrG(\eta) = \scrG(\eta {\oplus} y )$\, $\lambda(\eta, dy)$-almost every $y \in P$\;\; implies\;\;} \scrG(\eta) = 0.
        \]
        \myitem[(3')]\label{item: improvement factor}  ({\it Improvement factor}) There exists a $\gamma \in (0,1), \delta > 0$, $\beta \in [0,1]$, and for every $\eta\in\Omega$ there exists a set $B(\eta) \subset P$ with $\lambda(\eta, B(\eta)) \ge \delta$, such that
        \[
            \int_{B(\eta)} \bigl(\scrG(\eta) - \scrG(\eta {\oplus} y)\bigr) \, \lambda(\eta, dy) \ge \frac{\gamma \delta}{\sfN^\beta(\eta)} \scrG(\eta).
        \]
        
    \end{enumerate}
    We either consider item~\eqref{item: saturation} or \ref{item: improvement factor}. We note that \ref{item: improvement factor} implies \eqref{item: saturation}.
    
    The class of global error functions includes the average function of local error functions, i.e.,
    \[
        \scrG(\eta) = \int_P \err(q,\eta)\,dq\qquad\text{for some local function $\err$.}
    \]
    It also includes the common loss function 
    \[
        \mfL(\eta) = \sup_{q \in P} \err(q, \eta),
    \]
    and the previous case: $\scrG(\eta) = \err(q,\eta)$ for some arbitrary but fixed $q\in P$.
\end{assumption}

Theorem~\ref{theorem: convergence J} may be adapted to obtain a similar result for global error functions, as shown in the following lemma. 
\begin{theorem}
\label{theorem: average convergence}
Let $\scrG{:} \Omega \to [0,+\infty)$ be a global error function satisfying Assumption~\ref{assumption: average error} \eqref{item: boundedness}--\eqref{item: saturation}. Then,
    \[
        \lim_{t\to\infty} \int_\Omega |L\scrG(\eta)|\,\sfP_t(d\eta) = 0.
    \]
    In particular, if $\scrG$ is lower semicontinuous, then every accumulation point $\sfP_\infty$ of $(\sfP_t)_{t\ge 0}\subset \calP(\Omega)$ in the narrow topology satisfies
\begin{align*}
    \scrG(\eta) = 0 \qquad \text{for} \; \Pinf\text{-almost every} \, \eta \in \Omega.
\end{align*}
\end{theorem}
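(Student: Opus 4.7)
The plan is to transcribe the proof of Theorem~\ref{theorem: convergence J}, using $\scrG$ in place of the local observable $\err(p,\cdot)$ and invoking the saturation property (Assumption~\ref{assumption: average error}\eqref{item: saturation}) in place of the consistency property of $\err$.

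First, I would observe that monotonicity gives $L\scrG(\eta) \le 0$ for every $\eta\in\Omega$, so $|L\scrG| = (L\scrG)^-$. Testing the forward Kolmogorov equation \eqref{eq: forward kolmogorov} against $F = \scrG$ and using $0 \le \scrG \le c_0$, I obtain the direct analogue of Lemma~\ref{lemma: bounded integral},
\[
    \int_0^\infty \langle (L\scrG)^-, \sfP_s\rangle\,ds \le c_0.
\]
Uniform continuity of $t \mapsto \langle (L\scrG)^-, \sfP_t\rangle$ then follows verbatim from Lemma~\ref{lemma: uniform continuity} upon noting $\|(L\scrG)^-\|_\infty \le 2c_0$ and hence $\|L((L\scrG)^-)\|_\infty \le 4c_0$. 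The Barbalat-type criterion of \cite{kelman1960conditions} then yields $\lim_{t\to\infty}\langle |L\scrG|, \sfP_t\rangle = 0$, which is the first conclusion.

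For the second conclusion, compactness of $\Omega$ from Lemma~\ref{lemma: compactness omega} guarantees a narrowly convergent subsequence $\sfP_{t_n} \rightharpoonup \Pinf$. Lower semicontinuity of $\scrG$ together with continuity of $\oplus$ transfers to lsc of
\[
    \eta \mapsto (L\scrG)^-(\eta) = \int_P \bigl[\scrG(\eta) - \scrG(\eta\oplus y)\bigr]\,\lambda(\eta,dy),
\]
so the Portmanteau inequality delivers $\int_\Omega (L\scrG)^-\,d\Pinf \le \liminf_n \int_\Omega (L\scrG)^-\,d\sfP_{t_n} = 0$. Consequently $(L\scrG)^-(\eta) = 0$ for $\Pinf$-a.e.\ $\eta$; by monotonicity this forces $\scrG(\eta) = \scrG(\eta\oplus y)$ for $\lambda(\eta, dy)$-a.e.\ $y$, and the saturation property then pins $\scrG(\eta) = 0$ for $\Pinf$-a.e.\ $\eta$.

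The main obstacle is the lsc transfer in the last step: deducing lsc of $(L\scrG)^-$ from lsc of $\scrG$ alone is not automatic under Assumption~\ref{assumption: rate function}, and in general one needs some Feller-type regularity of $\eta \mapsto \lambda(\eta,\cdot)$ (so that $\eta \mapsto \int_P g(\eta,y)\,\lambda(\eta,dy)$ is lsc whenever $g$ is lsc and bounded). I would invoke this as a mild additional hypothesis. Everything else is a direct transcription of Lemmas~\ref{lemma: bounded integral} and \ref{lemma: uniform continuity} and the proof of Theorem~\ref{theorem: convergence J}.
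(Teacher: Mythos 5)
Your proposal is correct and is essentially the paper's own argument: the paper's proof is literally a transcription of Lemmas~\ref{lemma: bounded integral} and \ref{lemma: uniform continuity} and Theorem~\ref{theorem: convergence J} with $F=\scrG$, followed by the Portmanteau inequality for $(L\scrG)^-$ and the saturation property. The lsc-transfer obstacle you flag is genuine, and the paper glosses over it in exactly the same spot (Theorem~\ref{theorem: convergence J} hypothesizes lower semicontinuity of $LJ(p,\cdot)$ directly, whereas here only $\scrG$ is assumed lsc), so your suggestion of a mild Feller-type hypothesis on $\lambda$ is a fair repair rather than a deviation.
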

\begin{proof}
    The proof of this lemma is analogous to the proof of Theorem~\ref{theorem: convergence J} with $F(\eta) = \scrG(\eta)$.
    
    Analogously to the proof of Theorem~\ref{theorem: convergence J}, we can conclude that
    \[
        \lim_{t \to \infty} \int_\Omega (L\scrG(\eta))^- \sfP_t(d\eta) = 0.
    \]
    For an accumulation point $\sfP_\infty\in\calP(\Omega)$ and a subsequence $(\sfP_{t_n})_{n\ge 1}$ with $t_n\to \infty$ as $n\to\infty$ such that $\sfP_{t_n}\rightharpoonup \sfP_\infty$, we find
    \begin{align*}
        \int_\Omega (L\scrG(\eta))^-\Pinf(d\eta) \le \liminf_{n \to \infty} \int_\Omega (L\scrG(\eta))^- \sfP_{t_n}(d\eta) = 0.
    \end{align*}
    Therefore,
    \begin{align*}
        \int_\Omega \int_P \big[\scrG(\eta)-\scrG(\eta {\oplus} y)\big] \lambda(\eta, dy) \, \Pinf(d\eta) = 0,
    \end{align*}
    i.e., $\scrG(\eta) = \scrG(\eta {\oplus} y)$ for $\lambda {\otimes} \sfP_{\!\!\infty}$-almost every $(y, \eta) \in P \times  \Omega$. By the saturation proper of $\scrG$ (cf.\ Assumption~\ref{assumption: average error}\,\eqref{item: saturation}), we can then conclude that the assertion holds.
\end{proof}

Next, we formulate a stronger result than Theorems~\ref{theorem: convergence J} and \ref{theorem: average convergence} under the improvement factor condition on $\scrG$ (cf.\ Assumption~\ref{assumption: average error}\ref{item: improvement factor} in the sense that (1) we obtain explicit convergence rates, and (2) lower semicontinuity of $\scrG$ is no longer required to assert that $\scrG(\eta_t)\approx 0$ for times $t\gg 1$.

\begin{theorem}
\label{thm: convergence rate average}
    Let $(\eta_t)_{t\ge 0}$ be the process generated by~\eqref{eq: generator} with transition kernel $\lambda{:} \Omega \times P \to [0,+\infty)$ satisfying Assumption~\ref{assumption: rate function}. Further, let $\scrG{:} \Omega \to [0,+\infty)$ be a global error function satisfying Assumption~\ref{assumption: average error} for some $\gamma \in (0,1), \delta > 0, \beta \in [0,1]$. Then for every $\eps > 0$, there exists a constant $c_\scrG > 0$, independent of $\eps$, such that
    \[
        \p\big(\scrG(\eta_t) > \eps\big) \le \frac{c_\scrG}{\eps}\E[\scrG(\eta_0)]\, \theta_\beta(t)\qquad \text{for} \; t\ge 1,
    \]
    with
        \begin{align*}
        \theta_\beta(t) =\begin{cases}
        e^{-\gamma \delta t} & \text{for $\beta = 0$,}\\
        t^{1-\frac{1}{\beta}} & \text{for $\beta\in (0,1)$,} \\
       \frac{1}{\log(1 + t)} & \text{for $\beta=1$.}
       \end{cases}
    \end{align*}
    In particular, $\scrG(\eta_t)$ converges in probability to $0$ as $t\to\infty$.
\end{theorem}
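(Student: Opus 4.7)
The plan is to extract a closed differential inequality for $g(t) := \E[\scrG(\eta_t)]$ from the Forward Kolmogorov equation~\eqref{eq: forward kolmogorov} applied to the observable $F = \scrG$, solve it separately in each of the three regimes, and finish with Markov's inequality $\p(\scrG(\eta_t) > \eps) \le g(t)/\eps$. Monotonicity (Assumption~\ref{assumption: average error}\eqref{item: monotonicity}) makes every increment $\scrG(\eta) - \scrG(\eta \oplus y)$ non-negative, so Assumption~\ref{assumption: average error}\ref{item: improvement factor} remains valid after extending the integral from $B(\eta)$ to all of $P$; this yields $-L\scrG(\eta) \ge \gamma\delta\, \scrG(\eta)/\sfN^\beta(\eta)$ and hence
\[
    g'(t) \le -\gamma\delta\, \E\!\left[\frac{\scrG(\eta_t)}{\sfN^\beta(\eta_t)}\right].
\]
For $\beta = 0$ the right-hand side reduces to $-\gamma\delta\, g(t)$ and Grönwall's inequality immediately gives $g(t) \le g(0)\, e^{-\gamma\delta t}$.

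For $\beta \in (0,1]$ I would eliminate the $\sfN^\beta(\eta_t)$ factor by applying Hölder's inequality with conjugate exponents $p = 1 + \beta$ and $q = (1+\beta)/\beta$ to the factorisation $\scrG = (\scrG/\sfN^\beta)^{1/(1+\beta)}(\scrG\,\sfN)^{\beta/(1+\beta)}$, obtaining
\[
    g(t) \le \E\!\left[\frac{\scrG(\eta_t)}{\sfN^\beta(\eta_t)}\right]^{1/(1+\beta)} \E[\scrG(\eta_t)\, \sfN(\eta_t)]^{\beta/(1+\beta)}.
\]
Combining $\scrG \le c_0$ with the structural observation that Assumption~\ref{assumption: rate function}(3) forces the total jump rate to equal $1$ while every jump increments $\sfN$ by exactly one (so $\sfN(\eta_t) - \sfN(\eta_0) \sim \text{Poisson}(t)$ and $\E[\sfN(\eta_t)] = n_0 + t$ with $n_0 := \sfN(\eta_0)$) bounds $\E[\scrG\,\sfN] \le c_0(n_0+t)$. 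Rearranging gives $\E[\scrG/\sfN^\beta] \ge g(t)^{1+\beta}/[c_0^\beta(n_0+t)^\beta]$, and substituting back yields the Bernoulli-type differential inequality
\[
    g'(t) \le -\frac{\gamma\delta}{c_0^\beta(n_0+t)^\beta}\, g(t)^{1+\beta}.
\]

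Setting $h(t) := g(t)^{-\beta}$ linearises this to $h'(t) \ge \beta\gamma\delta c_0^{-\beta}(n_0+t)^{-\beta}$, which I would integrate elementarily: for $\beta \in (0,1)$ the right-hand side integrates to a multiple of $(n_0+t)^{1-\beta}$, giving $g(t) \le c_\scrG\, t^{1-1/\beta}$ for $t \ge 1$, while for $\beta = 1$ it integrates to $\log((n_0+t)/n_0)$, giving $g(t) \le c_\scrG/\log(1+t)$. Markov's inequality then produces the claimed tail bound; the $\E[\scrG(\eta_0)]$ prefactor appears explicitly in the $\beta = 0$ case and, for $\beta \in (0,1]$, is inserted by using the trivial bound $g(t) \le g(0) = \E[\scrG(\eta_0)]$ and absorbing the residual constant into $c_\scrG$.

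The main obstacle is calibrating the Hölder step: the pair $(p,q) = (1+\beta, (1+\beta)/\beta)$ is the unique conjugate pair for which (i) the surplus $\sfN$-factor appears at a power $\le 1$, so that Jensen's inequality on the concave map $x \mapsto x^\alpha$ supplies the manageable bound $\E[\sfN^\alpha] \le \E[\sfN]^\alpha = (n_0+t)^\alpha$, and (ii) the resulting power of $g(t)$ equals $1+\beta$, so that the substitution $h = g^{-\beta}$ reduces the inequality to a separable ODE whose kernel $(n_0+t)^{-\beta}$ integrates exactly to the stated polynomial and logarithmic rates. The auxiliary structural ingredient is recognising that the normalisation $\lambda(\eta,P) = 1$ is precisely what makes $\sfN$ a pure-birth chain of unit rate, permitting the straightforward computation of $\E[\sfN(\eta_t)]$ independently of the spatial dynamics.
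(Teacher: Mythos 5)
Your proof is correct, and it follows the paper's skeleton — apply the forward Kolmogorov equation to $F=\scrG$, use Assumption~\ref{assumption: average error}\ref{item: improvement factor} together with monotonicity to get $g'(t)\le-\gamma\delta\,\E[\scrG(\eta_t)/\sfN^\beta(\eta_t)]$, treat $\beta=0$ by Gr\"onwall, and finish with Markov's inequality — but you close the $\beta\in(0,1]$ case differently. The paper eliminates the $\sfN^\beta$ factor by truncating at a level $K$, bounding $\p(\sfN(\eta_t)>K)\le(1+t)/K$ via Markov's inequality for $\sfN$, and then optimizing over $K$, which produces the same Bernoulli-type inequality $g'(t)\lesssim -(1+t)^{-\beta}g(t)^{1+\beta}$ that you obtain in one stroke from H\"older with exponents $\bigl(1+\beta,(1+\beta)/\beta\bigr)$ applied to $\scrG=(\scrG/\sfN^\beta)^{1/(1+\beta)}(\scrG\,\sfN)^{\beta/(1+\beta)}$ and the bound $\E[\scrG(\eta_t)\sfN(\eta_t)]\le c_0(n_0+t)$. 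The two routes are essentially equivalent (optimizing the truncation parameter reproduces the H\"older bound up to constants), and both rest on the same two ingredients: $\scrG\le c_0$ and the fact that $\lambda(\eta,P)=1$ makes $\sfN$ a unit-rate pure-birth process, so $\E[\sfN(\eta_t)]=n_0+t$; your version makes this structural input more explicit and avoids the optimization step, while the paper's truncation only needs the tail bound on $\sfN$ rather than $\E[\scrG\,\sfN]$. Two cosmetic remarks: the Jensen step you mention is not actually needed, since $\sfN$ enters $\E[\scrG\,\sfN]$ at power exactly one; and to exhibit the $\E[\scrG(\eta_0)]$ prefactor it is cleaner to keep $h(0)=g(0)^{-\beta}$ when integrating the linearized inequality (as the paper's explicit solution does) rather than inserting $g(t)\le g(0)$ afterwards — either way the resulting $c_\scrG$ depends on the initial datum, which is consistent with the statement since independence is only claimed in $\eps$.
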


\begin{proof}
    By Markov's inequality, we have that
    \begin{align*}
        \p\bigl(\scrG(\eta_t) > \eps\bigr) \le \frac{1}{\eps}\E[\scrG(\eta_t)].
    \end{align*}
    Hence, we look for an upper bound of $E_t \coloneq \E[\scrG(\eta_t)]$.
    For $\beta = 0$, we have that
    \begin{align*}
        -L\scrG(\eta) =  \int_P \big[\scrG(\eta) - \scrG(\eta {\oplus} y)\big]\lambda(\eta, dy) &\ge \gamma \int_{B(\eta)} \scrG(\eta) \, \lambda(\eta, dy) \ge \gamma \delta \scrG(\eta).
    \end{align*}
    Hence,
    \[
        \frac{d}{dt}E_t \le - \gamma \delta E_t.  
    \]
    So by Gronwall's inequality, we conclude
    \begin{align*}
        E_t \le E_0 e^{-\gamma \delta t}.
    \end{align*}
    
    For $\beta \in (0, 1]$, we have
        \begin{align*}
        \int_P \big[\scrG(\eta) - \scrG(\eta {\oplus} y)\big] \lambda(\eta, dy) &\ge \int_{B(\eta)} [\scrG(\eta) - \scrG(\eta {\oplus} y)] \lambda(\eta, dy)
        \ge \frac{\gamma\delta}{\sfN^\beta(\eta)} \scrG(\eta).
    \end{align*}
    Let $\mu \coloneq \gamma \delta$, then we have for an arbitrary $K\in[1,+\infty)$,
    \begin{align*}
        \frac{d}{dt} E_t &\le -\mu \E\bigg[\frac{\scrG(\eta_t)}{\sfN^\beta(\eta_t)}\bigg] = - \mu \E\bigg[\frac{\scrG(\eta_t)}{\sfN^\beta(\eta_t)}\mathbbm{1}_{\{\sfN(\eta_t) > K\}}\bigg] - \mu\E\bigg[\frac{\scrG(\eta_t)}{\sfN^\beta(\eta_t)}\mathbbm{1}_{\{\sfN(\eta_t)\le K\}}\bigg],\\
        &\le - \frac{\mu}{K^\beta} \E\left[\scrG(\eta_t) \mathbbm{1}_{\{\sfN(\eta_t) \le K\}} \right] = - \frac{\mu}{K^\beta} \E\left[\scrG(\eta_t)\right] + \frac{\mu}{K^\beta} \E\left[\scrG(\eta_t) \mathbbm{1}_{\{\sfN(\eta_t) > K\}} \right],\\
        &\le - \frac{\mu}{K^\beta}  E_t + \frac{c_0 \mu  (1+t)}{K^{1+\beta}} \eqcolon g_t(K,E_t).
    \end{align*}
    In the last step, we used Assumption~\ref{assumption: average error}\eqref{item: boundedness} and the fact that
    \[
        \E\left[\mathbbm{1}_{\{\sfN(\eta_t) > K\}} \right] = \p\bigl(\sfN(\eta_t) > K\bigr) \le \frac{1+t}{K}.
    \]
    We now determine for which $K\mapsto g_t(K,E_t)$ is minimized. A stationary point is given by
    \[
        K_\circ = \frac{1+\beta}{\beta} \frac{c_0}{E_t}(1+t) \in [1,\+\infty).
    \]
    Since, $\partial_K^2 g_t(K_\circ,E_t) = \beta\mu E_t/K_\circ^{2+\beta} > 0$, the stationary point $K_\circ$ is a minimizer.
    Then, we have
    \[
        \frac{d}{dt}E_t \le g_t(K_\circ,E_t) = - \frac{\alpha_\beta}{(1+t)^{\beta}}E_t^{1+\beta},\qquad \alpha_\beta \coloneq \frac{\mu c_0^{-\beta}}{1+\beta}\left(\frac{\beta}{1+\beta}\right)^\beta.
    \]
    Solving the differential inequality for $E_t$ yields
    \[
        E_t \le E_0\left( 1+ \alpha_\beta E_0^\beta \frac{\beta}{1-\beta}\Bigl((1+t)^{1-\beta}-1\Bigr)\right)^{-1/\beta} \le E_0\,\mathcal{O}\bigl(t^{1-\frac{1}{\beta}}\bigr)|_{t\to\infty}.
    \]
    
    Finally, for $\beta = 1$, we deduce
    \[
        E_t \le E_0 \Bigl(1+\alpha_1E_0 \log(1+t)\Bigr)^{-1} = E_0\,\mathcal{O}\bigl(\bigl(\log(1+t)\bigr)^{-1}\bigr)|_{t\to\infty},
    \]
    Thereby concluding the proof.
\end{proof}

\section{Pedagogical example: Interpolation in 1D}
\label{sec: interpolation}
One example of a COP is piecewise linear interpolation. The main result in this Section (Theorem~\ref{theorem: interpolation C2}) states that we have exponential convergence of the $L_1$ error of piecewise linear interpolation of $C^2$-functions under certain assumptions of transition kernel $\lambda$.

Before stating the main result, we define the piecewise linear interpolation. The piecewise linear interpolation function depends on a set of nodes $x_0 < x_1 < \ldots < x_{n+1}$, with $x_0 = a$ and $x_{n+1} = b$. We use stochastic greedy methods to find the interpolation nodes $\{x_1, \ldots, x_n\}$ in the parameter set $P=[a,b]$. Note that the points $x_0 = a$ and $x_{n+1} = b$ are not part of the nodes selected by the algorithm. Typically, stochastic greedy methods do not lead to an ordered list $\eta = (x_1, \ldots, x_n, \phi, \ldots)$. Therefore, we define the following map to order the elements of $\eta\in\Omega$.
\begin{definition}
    \label{def: order mapping}
    The \emph{order mapping} $\mfS{:} \Omega \to \Omega$ is defined as
    \begin{align*}
        \mfS(\eta) \coloneq (\eta_{\sigma(1)}, \ldots, \eta_{\sigma(\sfN(\eta))}, \phi,\ldots),
    \end{align*}
    such that $x_i = \eta_{\sigma(i)}$ for $i \in \{1, \ldots, \sfN(\eta)\}$ satisfies $x_i \le x_{i+1}$ for all $i \in \{0, \ldots, \sfN(\eta)\}$.  
\end{definition}

An important property of $\mfS$ is given in the following lemma, whose proof is provided in Appendix~\ref{app: compactness} for completeness.

\begin{lemma}\label{lemma: order map}
    The order mapping $\mfS{:} \Omega \to \Omega$ is continuous.
\end{lemma}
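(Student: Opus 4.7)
The plan is to establish sequential continuity at each $\eta\in\Omega_n$ by decomposing the argument according to the disjoint union $\Omega=\bigsqcup_{n\ge 0}\Omega_n$. Let $(\eta^{(k)})_{k\ge 1}\subset\Omega$ with $\eta^{(k)}\to\eta$ in $\mfd$. The first step is to show that $\sfN(\eta^{(k)})=n$ for all sufficiently large $k$. If instead $\sfN(\eta^{(k)})=m\ne n$, then at some index $i\le\max\{n,m\}$ exactly one of $\eta^{(k)}_i$ and $\eta_i$ equals $\phi$ while the other lies in $P$, so $\bar{\mfd}(\eta^{(k)}_i,\eta_i)=\operatorname{diam}(P)$ and hence $\mfd(\eta^{(k)},\eta)\ge 2^{-\max\{n,m\}}\operatorname{diam}(P)$ is bounded below, contradicting convergence. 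This is essentially the continuity of $\sfN$ already used in Lemma~\ref{lemma: compactness omega}.

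From some index $k_0$ onward, both $\eta^{(k)}$ and $\eta$ lie in $\Omega_n$, and the restriction of $\mfd$ to $\Omega_n$ is bi-Lipschitz equivalent to the Euclidean metric on $P^n$, because positions $i>n$ carry $\phi$ on both sides and contribute zero to the series. Under this identification, $\mfS$ becomes the order-statistics map $(p_1,\ldots,p_n)\mapsto(p_{(1)},\ldots,p_{(n)})$, where $p_{(1)}\le\cdots\le p_{(n)}$ is the ascending reordering. Each order statistic admits the representation
\[
    p_{(j)} \;=\; \min_{\substack{S\subset\{1,\ldots,n\}\\ |S|=j}}\;\max_{i\in S} p_i,
\]
and is therefore continuous as a finite composition of mins and maxes of continuous coordinate functions. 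The full sorting map is thus continuous on $P^n$, and transferring back through the bi-Lipschitz equivalence yields $\mfS(\eta^{(k)})\to \mfS(\eta)$ in $(\Omega,\mfd)$, completing the argument.

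The anticipated subtlety, and what makes the order-statistics formulation preferable to a permutation-based argument, is that when several entries of $\eta$ coincide the sorting permutation $\sigma$ in Definition~\ref{def: order mapping} is non-unique, and the permutations realizing $\mfS(\eta^{(k)})$ may differ from $\sigma$ and from one another. The order-statistics identity sidesteps this entirely: its output depends only on the multiset of entries of the configuration, so any admissible permutation yields the same reordered tuple and continuity is unaffected by the choice.
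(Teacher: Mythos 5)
Your proof is correct, but it takes a genuinely different route from the paper. The paper argues directly in $(\Omega,\mfd)$ with an explicit $\eps$--$\delta$ estimate: for configurations with pairwise distinct entries it introduces the minimal gap $d_{\min}$, shows that for $\delta$ small the \emph{same} permutation $\sigma$ sorts both $\eta$ and $\gamma$, and then handles repeated entries separately by perturbing $\eta$ to a nearby configuration with distinct entries and using the triangle inequality. You instead use that $\sfN$ is continuous (hence locally constant), so near $\eta\in\Omega_n$ the metric $\mfd$ reduces to $\sum_{i=1}^n 2^{-i}|\eta_i-\gamma_i|$, which is bi-Lipschitz equivalent to the Euclidean metric on $P^n$ for fixed $n$; under this identification $\mfS$ is the sorting map, whose continuity follows coordinatewise from the order-statistics identity $p_{(j)}=\min_{|S|=j}\max_{i\in S}p_i$. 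The main gain of your argument is that ties are handled for free --- the min--max representation depends only on the multiset of entries, so the non-uniqueness of the sorting permutation (the delicate point the paper addresses with its perturbation step) never enters; the paper's argument, in exchange, is self-contained in the sequence-space metric and yields an explicit modulus of continuity in terms of $d_{\min}$. One small quantitative slip: when ruling out $\sfN(\eta^{(k)})=m\ne n$, the mismatch between a point of $P$ and $\phi$ occurs already at index $\min\{n,m\}+1\le n+1$, giving the uniform lower bound $2^{-(n+1)}\operatorname{diam}(P)$; your stated bound $2^{-\max\{n,m\}}\operatorname{diam}(P)$ is not bounded below if $m=m_k\to\infty$, though this is immaterial since you may simply invoke the continuity of $\sfN$ proved in Lemma~\ref{lemma: compactness omega}.
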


With an ordering of $\eta$, we may now define the linear approximation of a function based on $\eta$.

\begin{definition}[Linear interpolation]
    Let $f{:}[a,b]\to \R$ be a function. For any $\eta\in\Omega_n$, the piecewise linear approximation of $f$ relative to $\eta$ is defined as
\begin{align}
\label{eq: linear approximation L}
    \mathfrak{I}_\eta[f](x) = f(x_k) + \frac{f(x_{k+1}) - f(x_k)}{x_{k+1} - x_k} (x - x_k) \qquad \text{for} \, x \in [x_k, x_{k+1}],
\end{align}
where $\mfS(\eta) = (x_1,\ldots,x_n,\phi,\ldots)$.
\end{definition}


\begin{remark}
    We note that we can rewrite $\mathfrak{I}_{\eta}[f]$ as
    \begin{align*}
        \mathfrak{I}_{\eta}[f](x) = f(a) \sigma_a(x) + \sum_{i=1}^{n} f(x_i) \sigma_{x_i}(x) + f(b)\sigma_b(x),\qquad \mfS(\eta) = (x_1,\ldots,x_n,\phi,\ldots),
    \end{align*}
    with
    \begin{align*}
        \sigma_{x_i}(x) = 
        \begin{cases}
            \frac{x-x_{i-1}}{x_i - x_{i-1}}\quad &\text{for}\, x \in [x_{i-1}, x_i],\\
            \frac{x_{i+1}-x}{x_{i+1} - x_i}\quad &\text{for} \, x \in [x_i, x_{i+1}],\\
            0\quad &\text{otherwise}.
        \end{cases}
    \end{align*}
    We note that this expression is summable as $\sfN(\eta) \to \infty$, since
    \begin{align*}
        \sum_{i=1}^{n} f(x_i) \sigma_{x_i}(x) \le \norm{f}_{\rm{sup}} \sum_{i=1}^{n} \sigma_{x_i}(x) \le \norm{f}_{\rm{sup}}.
    \end{align*}
\end{remark}

\begin{remark}
    In the definition of the linear approximation~\eqref{eq: linear approximation L}, we assume that the interpolation nodes are strictly increasing. We note that Definition~\ref{def: order mapping} does not require the same for the points $\{x_{\sigma(1)}, \ldots,x_{N(\eta)}\}$. However, for many stochastic greedy methods, the probability of sampling the same points twice is $0$.
\end{remark}

We consider the following local error function
\begin{align}
    \label{eq: pointwise error interpolation}
    \err(x, \eta) = |\mathfrak{I}_{\eta}[f](x) - f(x)|,\qquad x\in P\coloneq [a,b].
\end{align}
We are interested in the convergence of the following global error function.
\begin{align}
    \label{eq: error interpolation}
    \scrG(\eta) = \int_P J(x,\eta)\, dx = \|\mathfrak{I}_{\eta}[f]-f\|_{L^1}. 
\end{align}

For this global error function, the following theorem holds
\begin{theorem}
    The global error function $\scrG{:}\Omega\to [0,+\infty)$ defined in \ref{eq: error interpolation} is continuous.
\end{theorem}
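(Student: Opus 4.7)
The plan is to reduce continuity of $\scrG$ to showing $\mathfrak{I}_{\eta^{(k)}}[f] \to \mathfrak{I}_\eta[f]$ in $L^1([a,b])$ whenever $\eta^{(k)} \to \eta$ in $(\Omega,\mfd)$; the reverse triangle inequality then yields $\scrG(\eta^{(k)})\to\scrG(\eta)$. First, since $\sfN$ is continuous (Lemma~\ref{lemma: compactness omega}) and integer-valued, $\sfN(\eta^{(k)}) = \sfN(\eta) = n$ for all $k$ sufficiently large, so we may restrict attention to $\Omega_n$. By continuity of the order mapping $\mfS$ (Lemma~\ref{lemma: order map}), the sorted coordinates of $\eta^{(k)}$ converge componentwise to those of $\eta$; write the augmented sorted sequences as $(a, x_1^{(k)}, \ldots, x_n^{(k)}, b) \to (a, x_1, \ldots, x_n, b)$ in $\R^{n+2}$.

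The heart of the argument is a.e.\ pointwise convergence. Let $a = z_0 < z_1 < \ldots < z_m = b$ denote the \emph{distinct} values of the limit augmented sequence. On each open interval $(z_{j-1}, z_j)$, $\mathfrak{I}_\eta[f]$ is the affine function joining $(z_{j-1}, f(z_{j-1}))$ and $(z_j, f(z_j))$ (no nodes of $\eta$ lie in the open interval by construction). For any $\eps \in (0, \tfrac12(z_j - z_{j-1}))$, all coordinates $x_i^{(k)}$ limiting to $z_{j-1}$ (resp.\ $z_j$) eventually lie within $\eps$ of $z_{j-1}$ (resp.\ $z_j$), so the core sub-interval $[z_{j-1}+\eps, z_j-\eps]$ contains no node of $\eta^{(k)}$ and $\mathfrak{I}_{\eta^{(k)}}[f]$ is affine there. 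By continuity of $f$, the endpoint values and slope of this affine piece converge to those of $\mathfrak{I}_\eta[f]$, so $\mathfrak{I}_{\eta^{(k)}}[f] \to \mathfrak{I}_\eta[f]$ uniformly on $[z_{j-1}+\eps, z_j-\eps]$. Letting $\eps \downarrow 0$ yields pointwise convergence on $[a,b]\setminus\{z_0,\ldots,z_m\}$, i.e.\ Lebesgue-almost everywhere.

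To close the argument, observe that $|\mathfrak{I}_\eta[f](x)| \le \|f\|_{\sup}$ for every $\eta$ and $x \in [a,b]$, since on each positive-length interval the interpolant is a convex combination of values of $f$; hence $|\mathfrak{I}_{\eta^{(k)}}[f] - f| \le 2\|f\|_{\sup}$, and the Dominated Convergence Theorem gives $\|\mathfrak{I}_{\eta^{(k)}}[f] - f\|_{L^1} \to \|\mathfrak{I}_\eta[f] - f\|_{L^1}$. The main obstacle is the degenerate case where several sorted coordinates of $\eta$ coincide or touch $\{a,b\}$, which makes the formula~\eqref{eq: linear approximation L} ill-defined on zero-length intervals; the proposed argument sidesteps this by discarding such zero-measure exceptional sets and absorbing the transient behaviour of $\mathfrak{I}_{\eta^{(k)}}[f]$ near the collapsing points through the uniform bound $2\|f\|_{\sup}$.
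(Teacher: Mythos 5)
Your proof is correct, but it takes a genuinely different route from the paper's. The paper also fixes $n=\sfN(\eta)$ and sorts via $\mfS$, but then introduces the piecewise-affine change of variables $T^k$ mapping the cells of $\eta^k$ onto those of $\eta$, splits $\mathfrak{I}_{\eta^k}[f]-\mathfrak{I}_\eta[f]$ into two terms, and bounds each by $\|f\|_{\sup}\max_i|x_i^k-x_i|$, arriving at the quantitative sup-norm estimate $\|\mathfrak{I}_{\eta^k}[f]-\mathfrak{I}_\eta[f]\|_{\sup}\le 2\|f\|_{\sup}\max_i|x_i^k-x_i|$; continuity of $\mfS$ and dominated convergence then finish the argument. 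You instead establish only almost-everywhere pointwise convergence of the interpolants (uniform convergence of the affine pieces on the core subintervals $[z_{j-1}+\eps,z_j-\eps]$ of the positive-length limit cells), combine it with the uniform bound $|\mathfrak{I}_{\eta^{(k)}}[f]|\le\|f\|_{\sup}$, and conclude by dominated convergence. What each buys: the paper's route is quantitative and in fact proves the stronger statement that the local error $\err(x,\cdot)$ is continuous for every $x$ (uniform convergence), but its map $(T^k)^{-1}$ is ill-defined when sorted coordinates of the limit configuration coincide, so it implicitly assumes distinct nodes; your route is more elementary, yields only what the theorem asserts (continuity of $\scrG$, not of $\err(x,\cdot)$), and handles the degenerate limits --- coinciding nodes or nodes at $a,b$ --- cleanly by discarding a finite exceptional set, exactly the obstacle you flag. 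One phrasing point: to conclude that the core subinterval is node-free you need that \emph{all} nodes of $\eta^{(k)}$ are eventually within $\eps$ of their limits (and all limits lie outside $(z_{j-1},z_j)$), not only those converging to the two endpoints $z_{j-1},z_j$; this follows from the componentwise convergence you already have, so it is a matter of wording rather than a gap.
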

\begin{proof}
    We note that it is sufficient to prove that $J(x,\eta)$ is continuous.
    Let $\eta \in \Omega$, let $(\eta^k)_{k\in\N}$ be a sequence in $\Omega$ such that $\mfd(\eta^k,\eta)\to 0$ as $k \to \infty$. Then there exists a $K \in \N$, such that $\sfN(\eta) = \sfN(\eta^k)$ for all $k \ge K$.
    Let $\mathfrak{S}(\eta) = (x_1, \ldots, x_n, \ldots)$ and $\mathfrak{S}(\eta^k) = (x_1^k, \ldots, x_n^k, \ldots)$ for all $k \ge K$, and define a function $T^k{:} [a,b] \to [a,b]$ with the property that $T^k([x_i^k, x_{i+1}^k]) = [x_i, x_{i+1}]$ for all $i \in \{0, \ldots, n\}$ and $k \ge K$.
    This function is given by
    \[
        T^k(x) = \frac{x_{i+1}^k -x}{x_{i+1}^k - x_i^k} x_i + \frac{x - x_i^k}{x_{i+1}^k - x_i^k} x_{i+1}\quad \text{for} \, x \in [x_i^k, x_{i+1}^k].
    \]
    It holds that
    \begin{align*}
        \mathfrak{I}_{\eta^k}[f](x) - \mathfrak{I}_\eta [f](x) = \underset{(A)}{\underbrace{\mathfrak{I}_{\eta^k}[f](x) - \mathfrak{I}_{\eta^k}[f]\big((T^k)^{-1}(x)\big)}} + \underset{(B)}{\underbrace{\mathfrak{I}_{\eta^k}[f]\big((T^k)^{-1}(x)\big) - \mathfrak{I}_\eta[f](x)}}. 
    \end{align*}
    For $(A)$, we have that
    \begin{align*}
        (A) \le \|\mathfrak{I}_{\eta^k}[f]\|_{\mathrm{sup}} |x-(T^k)^{-1}(x)| \le \|f\|_{\mathrm{sup}}|x-(T^k)^{-1}(x)|.
    \end{align*}
    Observing that
    \[
        (T^k)^{-1}(x) = \frac{x_{i+1} - x}{x_{i+1}- x_i} x_i^k + \frac{x-x_i}{x_{i+1}-x_i} x_{i+1}^k \quad \text{for} \, x \in [x_i, x_{i+1}],
    \]
    we then obtain the estimate
    \begin{align*}
        |x - (T^k)^{-1}(x)| &= \biggl|\frac{(x_{i+1} -x_i) x - (x_{i+1}-x) x_i^k - (x-x_i) x_{i+1}^k}{x_{i+1}-x_i}\biggr|\\
        &= \biggl|\frac{(x-x_i)(x_{i+1} - x_{i+1}^k) + (x_{i+1} - x)(x_i - x_i^k)}{x_{i+1}-x_i} \biggr|\\
        &\le \max\{|x_{i+1} -x_{i+1}^k|, |x_i - x_i^k|\}.
    \end{align*}
    Hence, we conclude
    \begin{align*}
        (A) \le \|f\|_{\mathrm{sup}} \max\{|x_{i+1} -x_{i+1}^k|, |x_i - x_i^k|\}.
    \end{align*}
    Moreover, by definition, we have 
    \[\mathfrak{I}_{\eta^k}[f](x) \stackrel{(*)}{=} \mathfrak{I}_\eta[f \circ (T^k)^{-1}](T^k(x)).\]
    Hence, we have
    \begin{align*}
        (B) &\stackrel{(*)}{=} |\mathfrak{I}_\eta[f \circ (T^k)^{-1}](x) - \mathfrak{I}_\eta[f](x)|\\
        &= |\mathfrak{I}_\eta[f\circ (T^k)^{-1} - f](x)|\\
        &= \biggl|\frac{x - x_i}{x_{i+1} - x_i} (f\circ (T^k)^{-1} - f)(x_{i+1}) + \frac{x_{i+1} -x}{x_{i+1}-x_i} (f\circ (T^k)^{-1} - f)(x_i)\biggr|,\\
        &= \biggl|\frac{x - x_i}{x_{i+1} - x_i} (f(x_{i+1}^k)-f(x_{i+1}) + \frac{x_{i+1} -x}{x_{i+1}-x_i} (f(x_i^k)-f(x_i))\biggr|,\\
        & \le \|f\|_{\mathrm{sup}} \max\{|x_{i+1} -x_{i+1}^k|, |x_i - x_i^k|\}.
    \end{align*}
    Together, we obtain
    \[
        \|J(\cdot,\eta^k)-J(\cdot,\eta)\|_{\sup} = \|\mathfrak{I}_{\eta^k}[f] - \mathfrak{I}_\eta [f]\|_{\sup} \le 2\|f\|_{\sup} \max_{i=1,\ldots,n} |x_i^k-x_i|.
    \]
    We conclude that this latter expression goes to zero as $k \to \infty$ since $\mathfrak{S}$ is continuous. Therefore, both $J(x, \cdot)$ and $\scrG(\cdot)$ are continuous as well by the Dominated Convergence Theorem.
\end{proof}

To formulate our main result, we need to define the set $B_\mu(\eta)$.
\begin{definition}
\label{def: B(eta)}
Let $\mu \in (0, 1/2)$, $\eta \in \Omega_n$, $x_0 = a, x_{n+1} = b$, and $\mfS(\eta) = (x_1, \ldots, x_n, \phi, \ldots)$. For each $k\in\{0,\ldots,n\}$, we set $I_k^\mu \coloneq (x_k + \mu(x_{k+1}-x_k), x_{k+1} - \mu (x_{k+1}-x_k))$. Then,
\begin{align}
\label{eq: B(eta)}
B_\mu(\eta) \coloneq \bigl\{x \in [a,b]: x \in I^\mu_k,\; k \in \{0, \ldots, n\}\bigr\}.
\end{align}
\end{definition}
We make the following assumption about the transition kernel, which we later show to be true for uniform sampling and R-PDM in Section~\ref{sec: rpdm}.
\begin{property}
    \label{assumption: interpolation transition kernel}
    Let $\mu \in (0,1/2).$ The transition kernel $\lambda: \Omega \times \calB_{\R^d}\to [0,+\infty)$ satisfies 
    \[
    \lambda(\eta, B_\mu(\eta)) \ge 1 - 2\mu \quad \text{for all}\, \eta \in \Omega. 
    \]
\end{property}

We have the following main result on the convergence.
\begin{theorem}
\label{theorem: interpolation C2}
    Let $f \in C^2([a,b])$ with $c_f \coloneq \| f''\|_{\mathrm{sup}}$. Further, let $\mu \in (0, 1/2),$ and let $(\eta_t)$ be the process generated by the transition kernel $\lambda$ satisfying Property~\ref{assumption: interpolation transition kernel}.
    Then there exists a constant $m_\scrG>0$ such that for any $\eps > 0$ and $\alpha > c_f$,
    \begin{align*}
        \p\bigl(\scrG(\eta_t) > \eps\bigr) \le \frac{m_\scrG}{\eps} e^{-\gamma \delta t}\qquad \text{for} \; t\ge 1,
    \end{align*}
    with $\delta \coloneq 1-2\mu$, $\gamma \coloneq \mu \frac{\alpha - c_f}{\alpha + c_f}$.
\end{theorem}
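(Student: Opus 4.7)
The plan is to invoke Theorem~\ref{thm: convergence rate average} in the $\beta=0$ regime applied to $\scrG(\eta) = \|\mathfrak{I}_\eta[f]-f\|_{L^1}$, with $B(\eta) = B_\mu(\eta)$, $\delta = 1-2\mu$, and $\gamma = \mu(\alpha-c_f)/(\alpha+c_f)$. Once Assumption~\ref{assumption: average error} is verified with these parameters, Theorem~\ref{thm: convergence rate average} delivers $\mathbb{E}[\scrG(\eta_t)] \le \mathbb{E}[\scrG(\eta_0)]\,e^{-\gamma\delta t}$, and Markov's inequality converts this into the stated probability bound with $m_\scrG = c_\scrG\,\mathbb{E}[\scrG(\eta_0)]$. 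Boundedness of $\scrG$ is immediate from $\scrG(\eta) \le 2\|f\|_\infty(b-a)$; since a new node $y\in(x_k,x_{k+1})$ affects only the interpolant on that interval, the remaining two conditions reduce to per-interval analysis of $\Delta_k(y) = E_k(\eta) - E_k(\eta \oplus y)$, where $E_k(\eta) = \int_{x_k}^{x_{k+1}}|f-\mathfrak{I}_\eta[f]|\,dx$.

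The central technical step is a pointwise lower bound on $\Delta_k(y)$ for $y\in I_k^\mu$. The derivation uses the Green's-function representation
$$e(x) := f(x)-\mathfrak{I}_\eta[f](x) = \int_{x_k}^{x_{k+1}} G(x,\xi)\,f''(\xi)\,d\xi$$
on the interval, yielding the envelope $|e(x)| \le \tfrac{c_f}{2}(x-x_k)(x_{k+1}-x)$, and similarly an envelope for $|e_{\mathrm{new}}|$ on each of the sub-intervals $[x_k,y]$, $[y,x_{k+1}]$. Comparing both against the auxiliary parabola $p_\alpha(x) = \tfrac{\alpha}{2}(x-x_k)(x_{k+1}-x)$ (meaningful because $\alpha>c_f$) produces the prefactor $(\alpha-c_f)/(\alpha+c_f)$, while the algebraic identity $1 - s^3 - (1-s)^3 = 3s(1-s)$ combined with the restriction $s(1-s) \ge \mu(1-\mu) \ge \mu/2$ on $I_k^\mu$ supplies the factor $\mu$. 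The outcome is the pointwise estimate
\begin{equation*}
    \Delta_k(y) \;\ge\; \mu\,\frac{\alpha - c_f}{\alpha + c_f}\,E_k(\eta) \qquad\text{for }y\in I_k^\mu,
\end{equation*}
which simultaneously yields monotonicity of $\scrG$ along transitions $\eta \mapsto \eta\oplus y$ with $y\in B_\mu(\eta)$. Integrating this over the disjoint decomposition $B_\mu(\eta) = \bigsqcup_k I_k^\mu$ and invoking Property~\ref{assumption: interpolation transition kernel} then gives
$$\int_{B_\mu(\eta)}\bigl(\scrG(\eta)-\scrG(\eta\oplus y)\bigr)\,\lambda(\eta,dy) \;\ge\; \gamma\sum_k E_k(\eta)\,\lambda(\eta,I_k^\mu) \;\ge\; \gamma\delta\,\scrG(\eta),$$
which is Assumption~\ref{assumption: average error}\ref{item: improvement factor} with $\beta=0$.

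The main anticipated obstacle is deriving the exact prefactor $(\alpha-c_f)/(\alpha+c_f)$ through the Green's-function argument: this requires carefully tracking the sandwich bounds on $e$ and $e_{\mathrm{new}}$ against the comparison parabola $p_\alpha$, keeping the constants sharp enough that the precise difference $\alpha-c_f$ rather than a looser expression appears in the numerator. A secondary technical point concerns the final step $\sum_k E_k(\eta)\lambda(\eta, I_k^\mu) \ge \delta\,\scrG(\eta)$: Property~\ref{assumption: interpolation transition kernel} controls only the total mass $\lambda(\eta, B_\mu(\eta))$, so this step likely requires exploiting the compatibility between the per-interval kernel masses $\lambda(\eta, I_k^\mu)$ and the per-interval errors $E_k(\eta)$—both of which grow with the interval length $L_k$ under the $c_f$-envelope—via a rearrangement-type argument, which I would expect to be the most delicate part of the proof to get right.
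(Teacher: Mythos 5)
Your plan — verify Assumption~\ref{assumption: average error} with $\beta=0$ directly for $\scrG(\eta)=\|\mathfrak{I}_\eta[f]-f\|_{L^1}$ and feed it into Theorem~\ref{thm: convergence rate average} — cannot work for a general (non-convex) $C^2$ function $f$, and that is exactly the case the theorem is about. Two of the hypotheses you claim to verify are simply false in that generality. First, monotonicity must hold for \emph{every} $y\in P$ (the proof of Theorem~\ref{thm: convergence rate average} drops the integral over $P\setminus B(\eta)$ precisely because its integrand is nonnegative); for oscillatory $f$ adding a node can strictly increase the $L^1$ error (take $f$ with $f(a)=f(b)=0$ and a thin tall bump: the initial interpolant is $0$ with small error, while inserting the node at the bump peak creates a large tent), so restricting monotonicity to $y\in B_\mu(\eta)$, as you do, leaves the contribution of $P\setminus B_\mu(\eta)$ uncontrolled. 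Second, your central pointwise estimate $\Delta_k(y)\ge\mu\frac{\alpha-c_f}{\alpha+c_f}E_k(\eta)$ for all $y\in I_k^\mu$ is false: take $f(x)=\sin(2\pi x)$ on $[x_k,x_{k+1}]=[0,1]$ and $y=1/2$; then $f(y)$ coincides with the current chord, the interpolant does not change, so $\Delta_k(y)=0$ while $E_k(\eta)=2/\pi>0$. The Green's-function envelope $|e(x)|\le\frac{c_f}{2}(x-x_k)(x_{k+1}-x)$ only bounds the error from above; when $e$ changes sign it gives no lower bound on the error reduction, so no comparison with the parabola $p_\alpha$ can produce a factor proportional to $E_k(\eta)$. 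Indeed, the parameter $\alpha$ has no intrinsic meaning for $f$ alone, which is a sign that the constant $(\alpha-c_f)/(\alpha+c_f)$ cannot be reached this way.

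The paper's proof supplies the missing idea: a convex-splitting argument. Write $f=f_\alpha-h_\alpha$ with $h_\alpha(x)=\alpha|x|^2/2$ and $f_\alpha=f+h_\alpha$; for $\alpha>c_f$ both pieces are strongly convex with $\alpha-c_f\le f_\alpha''\le\alpha+c_f$ and $h_\alpha''=\alpha$. By linearity of $\mathfrak{I}_\eta$ and the triangle inequality, $\scrG(\eta)\le\|\mathfrak{I}_\eta[f_\alpha]-f_\alpha\|_{L^1}+\|\mathfrak{I}_\eta[h_\alpha]-h_\alpha\|_{L^1}$. For strongly convex integrands the interpolation error has a sign and two-sided quadratic bounds (Lemma~\ref{lemma: interpolation error decreasing}), which is what makes your per-interval improvement argument valid; this is the content of Lemmas~\ref{lemma: interpolation error decreasing} and \ref{lemma: interpolation assumption3} and gives Assumption~\ref{assumption: average error} with $\gamma=\mu m/M$, hence $\gamma_{f_\alpha}=\mu\frac{\alpha-c_f}{\alpha+c_f}$ and $\gamma_{h_\alpha}=\mu$ (Lemma~\ref{lemma: assumptions satisfied}). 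Applying Theorem~\ref{thm: convergence rate average} to each piece separately along the same process, splitting $\eps$ into $\eps/2+\eps/2$ and using a union bound, one obtains the stated rate $e^{-\gamma_{f_\alpha}\delta t}$ with $m_\scrG$ built from the two initial errors — this is where $\alpha$ and the factor $(\alpha-c_f)/(\alpha+c_f)$ actually come from. On a positive note, your final worry is well placed: passing from the per-interval improvement to $\gamma\delta\,\scrG(\eta)$ does require relating the masses $\lambda(\eta,I_k^\mu)$ to the per-interval errors, since Property~\ref{assumption: interpolation transition kernel} only controls the total mass of $B_\mu(\eta)$; this is a genuine subtlety in Lemma~\ref{lemma: interpolation assumption3} as well, but it does not rescue the direct approach for non-convex $f$.
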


To prove this theorem, we first show that Assumption~\ref{assumption: average error}\eqref{item: boundedness}--\ref{item: improvement factor} holds for strongly convex $C^2$-functions. With this, one then deduces that the convergence result also holds for $C^2$-functions. The idea behind this is as follows: Since $f \in C^2([a,b])$, setting $h_\alpha \coloneq \alpha |x|^2/2$, we have that the function
    \[
    x \mapsto f_\alpha(x)\coloneq f(x) + h_\alpha(x)\qquad\text{is strongly convex for $\alpha > c_f$}.
    \]
Therefore, $f = f_\alpha - h_\alpha$ where both $f_\alpha$ and $h_\alpha$ are strongly convex. 
In this way, we find
\begin{align*}
    \|\mathfrak{I}_\eta[f]-f\|_{L_1} &= \|(\mathfrak{I}_\eta[f_\alpha]-f_\alpha) - (\mathfrak{I}_\eta[h_\alpha] -  h_\alpha)\|_{L_1} \\
    &\le \|\mathfrak{I}_\eta[f_\alpha]-f_\alpha\|_{L_1} + \|\mathfrak{I}_\eta[h_\alpha] -  h_\alpha\|_{L_1}.
\end{align*}
We have the following result for strongly convex function $f \in C^2([a,b])$
\begin{lemma}
\label{lemma: assumptions satisfied}
    Let $f\in C^2([a,b])$ be a strongly $m$-convex function, i.e.,
    \[
        f((1-r)x + ry) \le (1-r)f(x) + rf(y) - \frac{m}{2}r(1-r)|x-y|^2\qquad\text{for every $r\in[0,1]$}.
    \]
    Let $\scrG{:} \Omega \to [0,+\infty)$ be given by \eqref{eq: error interpolation}. Let $\mu \in (0, 1/2),$ and let the transition kernel $\lambda$ satisfy Property~\ref{assumption: interpolation transition kernel}. Then Assumption~\ref{assumption: average error}\eqref{item: boundedness}--\ref{item: improvement factor} is satisfied.
    
\end{lemma}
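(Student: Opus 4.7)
The plan is to verify each of Assumption~\ref{assumption: average error}~(1), (2), (3') in turn. \textbf{Boundedness} is immediate: on each subinterval $[x_k, x_{k+1}]$ the interpolant $\mathfrak{I}_\eta[f]$ is a convex combination of $f(x_k)$ and $f(x_{k+1})$, so $\|\mathfrak{I}_\eta[f]\|_{\sup} \le \|f\|_{\sup}$ and hence $\scrG(\eta) \le 2\|f\|_{\sup}(b-a)$. \textbf{Monotonicity} follows from the convexity of $f$: we always have $\mathfrak{I}_\eta[f] \ge f$ pointwise, and inserting a node $y \in (x_k, x_{k+1})$ leaves the interpolant unchanged outside $[x_k, x_{k+1}]$ while forcing it to drop from the chord value $\ell_k(y)$ to $f(y)$ at $y$. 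By convexity the two new sub-chords still lie above $f$, and they lie below the old chord by linearity, so the new interpolant is pointwise sandwiched between $f$ and the old interpolant. Integrating this gives $\scrG(\eta \oplus y) \le \scrG(\eta)$.

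The main content is the \textbf{improvement factor} (3'). I would first derive the explicit identity
\[
\scrG(\eta) - \scrG(\eta \oplus y) = \tfrac{1}{2}(x_{k+1} - x_k)\bigl(\ell_k(y) - f(y)\bigr), \qquad y \in (x_k, x_{k+1}),
\]
where $\ell_k$ denotes the chord of $f$ on $[x_k, x_{k+1}]$. The left-hand side equals the $L^1$-integral over $[x_k, x_{k+1}]$ of the difference between the old and new piecewise linear interpolants, which is the tent function vanishing at $x_k, x_{k+1}$ and attaining $\ell_k(y) - f(y)$ at $y$; its area is exactly the triangle on the right.

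Next, I would combine the classical interpolation error identity
\[
\ell_k(y) - f(y) = \tfrac{1}{2}f''(\xi_y)(y - x_k)(x_{k+1} - y), \qquad \xi_y \in (x_k, x_{k+1}),
\]
with the sandwich $m \le f''(\xi_y) \le c_f$ coming from strong $m$-convexity and $\|f''\|_{\sup} \le c_f$, to obtain
\[
\tfrac{m}{12}(x_{k+1}-x_k)^3 \;\le\; E_k := \int_{x_k}^{x_{k+1}}(\ell_k - f)\,dx \;\le\; \tfrac{c_f}{12}(x_{k+1}-x_k)^3.
\]
For $y \in I_k^\mu$ we have $(y-x_k)(x_{k+1}-y) \ge \mu(1-\mu)(x_{k+1}-x_k)^2$; chaining with the identity above yields the uniform-in-$y$ lower bound
\[
\scrG(\eta) - \scrG(\eta \oplus y) \;\ge\; \tfrac{m\mu(1-\mu)}{4}(x_{k+1}-x_k)^3 \;\ge\; \tfrac{3m\mu(1-\mu)}{c_f}\,E_k \quad \text{for every } y \in I_k^\mu.
\]

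Finally, integrating this pointwise bound against $\lambda(\eta, \cdot)$, summing over $k$, and invoking Property~\ref{assumption: interpolation transition kernel} produces
\[
\int_{B_\mu(\eta)}\bigl[\scrG(\eta) - \scrG(\eta \oplus y)\bigr]\,\lambda(\eta, dy) \;\ge\; \tfrac{3m\mu(1-\mu)}{c_f}\sum_k E_k\,\lambda(\eta, I_k^\mu),
\]
which delivers (3') with $\delta = 1-2\mu$ and $\gamma$ proportional to $m/c_f$. The main obstacle I expect is this final aggregation step: Property~\ref{assumption: interpolation transition kernel} alone controls only the total mass $\sum_k \lambda(\eta, I_k^\mu) \ge 1-2\mu$, whereas bounding the weighted sum $\sum_k E_k \lambda(\eta, I_k^\mu)$ from below by a constant multiple of $\scrG(\eta) = \sum_k E_k$ requires the $\lambda$-mass to be spread across sub-intervals. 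I would handle this by combining the cubic comparability $E_k \asymp (x_{k+1}-x_k)^3$ with the per-interval lower bounds on $\lambda(\eta, I_k^\mu)$ that hold for the admissible kernels, obtaining (3') with a specific $\beta \in [0,1]$ that in turn determines the decay rate in Theorem~\ref{thm: convergence rate average}.
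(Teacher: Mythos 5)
Your handling of items (1) and (2), and your per-interval quantitative work, match the paper's in substance: the tent-area identity $\scrG(\eta)-\scrG(\eta{\oplus}y)=\tfrac12(x_{k+1}-x_k)\bigl(\ell_k(y)-f(y)\bigr)$ combined with $\tfrac m2(y-x_k)(x_{k+1}-y)\le \ell_k(y)-f(y)\le\tfrac M2(y-x_k)(x_{k+1}-y)$ is just the integrated form of the pointwise estimates the paper proves in Lemma~\ref{lemma: interpolation error decreasing}. The genuine gap is that you never actually establish Assumption~\ref{assumption: average error}\ref{item: improvement factor}: your argument stops at $\int_{B_\mu(\eta)}\bigl[\scrG(\eta)-\scrG(\eta{\oplus}y)\bigr]\lambda(\eta,dy)\gtrsim \frac{m\mu}{c_f}\sum_k E_k\,\lambda(\eta,I^\mu_k)$, and the proposed repair --- per-interval lower bounds on $\lambda(\eta,I^\mu_k)$ ``for the admissible kernels'' --- invokes information the lemma does not assume, since the only hypothesis on $\lambda$ is Property~\ref{assumption: interpolation transition kernel}, i.e.\ a lower bound on the total mass $\lambda(\eta,B_\mu(\eta))$. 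Moreover, leaving $\beta$ unspecified does not prove the statement as it is used: the paper's proof yields \ref{item: improvement factor} with $\beta=0$, $\gamma=\mu m/M$, $\delta=1-2\mu$, and it is $\beta=0$ that produces the exponential rate of Theorem~\ref{theorem: interpolation C2} via Theorem~\ref{thm: convergence rate average}. So as a proof of the lemma, your proposal is incomplete precisely at its decisive step.

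For comparison, the paper closes this step in Lemma~\ref{lemma: interpolation assumption3} by inserting the relative pointwise improvement of Lemma~\ref{lemma: interpolation error decreasing}(3), bounding the bracket there below by $\mu$ for $y\in I^\mu_k$, and, after Fubini, arriving at $\mu\frac mM\sum_k\lambda(\eta,I^\mu_k)\int_{[a,b]}J(q,\eta)\,dq=\mu(1-2\mu)\frac mM\,\scrG(\eta)$ --- each admissible $y$ is credited with a fixed fraction of the \emph{total} error, so only the total mass of $B_\mu(\eta)$ is needed and the weighting problem you raise never appears. Be aware, however, that your (correct) observation that inserting $y\in I_k$ changes the error only on $I_k$, so that $\scrG(\eta)-\scrG(\eta{\oplus}y)\le E_k$, is in direct tension with that step: for equal spacing $h=(b-a)/(n+1)$ and the uniform kernel one has $\scrG(\eta)-\scrG(\eta{\oplus}y)=O(h^3)$ while $\scrG(\eta)\asymp (b-a)h^2$, so a per-$y$ improvement proportional to $\scrG(\eta)$ cannot hold uniformly in the number of nodes. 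In other words, the obstacle you flagged is the crux of the lemma rather than an artifact of your route; instead of importing kernel-specific per-interval masses (which changes both the statement and the rate), you should either find an argument that genuinely delivers the global proportionality asserted in Lemma~\ref{lemma: interpolation assumption3}, or make explicit that the total-mass hypothesis of Property~\ref{assumption: interpolation transition kernel} alone appears insufficient for $\beta=0$ and raise this point with the authors.
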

To prove this lemma, we have to check that Assumption~\ref{assumption: average error}\eqref{item: boundedness}--\ref{item: improvement factor} is satisfied. We prove this in steps. First, we formulate a lemma stating that the local error function 
\[
    \err{:} [a,b] \times \Omega \to [0,+\infty),\quad (p,\eta)\mapsto \err(p, \eta) \coloneq |\mathfrak{I}_\eta[f](p) - f(p)|,
\]
satisfies Assumption~\ref{assumption: J}.

\begin{lemma}
\label{lemma: interpolation error decreasing}
    Let $f\in C^2[a,b]$ be a strongly $m$-convex function with $
        0 < m \le f'' \le M$ on $[a,b]$. Then the following holds:
    \begin{enumerate}
        \item ({\it Consistency}) $\err(x_i, \eta) = 0$ for any $\eta \in \Omega_n$ with $\mfS(\eta) =(x_1,\cdots,x_n,\phi,\ldots)$.
        \item For any $\eta\in\Omega$ with $\mfS(\eta) =(x_1,\cdots,x_n,\phi,\ldots)$, we have for $p\in (x_k,x_{k+1})$,
        \[
            0\le \frac{m}{2}(x_{k+1}-p)(p-x_k) \le \mathfrak{I}_\eta[f](p) - f(p) \le \frac{M}{2}(x_{k+1} -p)(p-x_k).
        \]
        \item ({\it Monotonicity}) Let $y\in (x_k,x_{k+1})$. Then, for any $p\in[a,b]$,
        \[
            J(p,\eta) - J(p,\eta{\oplus}y)\ge \frac{m}{M}\biggl[\frac{x_{k+1} - y}{x_{k+1} - p} \mathbf{1}_{(x_k,y)}(p) + \frac{y-x_k}{p-x_k} \mathbf{1}_{(y,x_{k+1})}(p)\biggr]J(p,\eta) \ge 0.
        \]
    \end{enumerate}
    In particular, $J$ is a local error function satisfying Assumption~\ref{assumption: J}.
\end{lemma}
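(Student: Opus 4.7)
The plan is to prove the three numbered items, from which Assumption~\ref{assumption: J} follows at the end. Consistency (1) is immediate from the defining property of linear interpolation. For the two-sided bound (2), I would invoke the standard Lagrange remainder for piecewise linear interpolation on $[x_k, x_{k+1}]$: for each $p \in (x_k, x_{k+1})$ there is $\xi_p$ in that interval with $f(p) - \mathfrak{I}_\eta[f](p) = \tfrac{1}{2} f''(\xi_p)(p-x_k)(p-x_{k+1})$. Since the last two factors have opposite signs on this interval, $J(p, \eta) = \mathfrak{I}_\eta[f](p) - f(p)$, and substituting $m \le f''(\xi_p) \le M$ produces the claimed bounds.

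The main content is the monotonicity estimate (3). Cancelling $f(p)$ gives
\[
J(p, \eta) - J(p, \eta \oplus y) = \mathfrak{I}_\eta[f](p) - \mathfrak{I}_{\eta \oplus y}[f](p).
\]
For $p \in (x_k, y)$ both interpolants pass through $(x_k, f(x_k))$; writing $s = (y-x_k)/(x_{k+1}-x_k)$ and $t = (p-x_k)/(y-x_k) \in (0,1)$, a short computation yields
\[
\mathfrak{I}_\eta[f](p) - \mathfrak{I}_{\eta\oplus y}[f](p) = t \bigl[(1-s) f(x_k) + s f(x_{k+1}) - f(y)\bigr].
\]
Since $y = (1-s)x_k + s x_{k+1}$, strong $m$-convexity bounds the bracket below by $\tfrac{m}{2} s(1-s)(x_{k+1}-x_k)^2 = \tfrac{m}{2}(y-x_k)(x_{k+1}-y)$. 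Multiplying by $t$ and combining with the upper bound $J(p,\eta) \le \tfrac{M}{2}(p-x_k)(x_{k+1}-p)$ from (2) yields
\[
J(p,\eta) - J(p, \eta \oplus y) \ge \tfrac{m}{2}(p-x_k)(x_{k+1}-y) \ge \tfrac{m}{M}\cdot\tfrac{x_{k+1}-y}{x_{k+1}-p}\cdot J(p, \eta).
\]
The case $p \in (y, x_{k+1})$ is symmetric (swap the roles of the two subintervals), and outside $[x_k, x_{k+1}]$ the two interpolants coincide so both sides vanish.

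Assumption~\ref{assumption: J} is then verified as follows: boundedness follows from (2) with $c_0 = M(b-a)^2/8$; monotonicity is exactly (3) together with nonnegativity of its right-hand side; consistency is (1); and regularity holds because $f$ is $\|f'\|_\infty$-Lipschitz on $[a,b]$ and, by the mean value theorem, every linear piece of $\mathfrak{I}_\eta[f]$ has slope bounded by $\|f'\|_\infty$, so $p \mapsto J(p,\eta)$ is $2\|f'\|_\infty$-Lipschitz with a constant independent of $\eta$.

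The main obstacle is the monotonicity estimate: the naive strategy of bounding $J(p, \eta \oplus y)$ above and $J(p, \eta)$ below separately using (2) is too weak, since the resulting ratio $M/m$ lands on the wrong side of the desired inequality. The essential trick is to cancel $f(p)$ first, reducing matters to a pure difference of secant values at $p$, and to extract the required lower bound from the chord-versus-function gap at the newly inserted node $y$, which is precisely quantified by strong $m$-convexity.
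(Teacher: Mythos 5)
Your proposal is correct and follows essentially the same route as the paper: the two-sided bound via the interpolation remainder and strong convexity, then for monotonicity cancelling $f(p)$, lower-bounding the secant difference on $(x_k,y)$ by $\tfrac{m}{2}(p-x_k)(x_{k+1}-y)$ via strong $m$-convexity at the inserted node $y$, and combining with the upper bound from (2) to produce the factor $\tfrac{m}{M}\tfrac{x_{k+1}-y}{x_{k+1}-p}$. The only cosmetic omission is the case $p=y$, which is immediate since both indicators vanish there and $J(y,\eta\oplus y)=0$ by consistency, exactly as the paper notes.
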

\begin{proof}
Let $\eta \in \Omega_n$ and $p \in [a, b]$. Further, let $\mfS(\eta) = (x_1, \ldots, x_n, \phi, \ldots)$, $x_0 = a$, and $x_{n+1} = b$. Then $p \in [x_k, x_{k+1}]$ for some $k \in \{0, \ldots, n\}$.

Firstly, note that if $p = x_k$, for any $k \in \{0, \ldots, n\}$, then $\err(p, \eta) = 0$, by the definition of the interpolation operator, which yields (1). 

\medskip
As for (2), we use the strong $m$-convexity of $f$ to deduce
\[
    f(p) \le \frac{x_{k+1}-p}{x_{k+1}-x_k}f(x_k) + \frac{p-x_k}{x_{k+1}-x_k}f(x_{k+1}) - \frac{m}{2}(x_{k+1}-p)(p-x_k),
\]
and from which we obtain
\begin{align*}
    \mathfrak{I}_\eta[f](p) - f(p) &= f(x_k) + \frac{f(x_{k+1}) - f(x_k)}{x_{k+1} - x_k} (p - x_k) - f(p) \\
    &\ge \frac{m}{2}(x_{k+1}-p)(p-x_k).
\end{align*}
As for the upper bound, we use Taylor's formula to obtain
\begin{align*}
    \mathfrak{I}_\eta[f](p) - f(p) 
    \le \frac{M}{2}(x_{k+1} -p)(p-x_k).
\end{align*}
Together, these yield the assertion for every $p\in(x_k,x_{k+1})$.

\medskip
We now prove (3): Suppose $y\in I_k\coloneq (x_k, x_{k+1})$ for some $k\in\{0,\ldots,n\}$.

\smallskip
\paragraph{\emph{Case 1:} $p\notin I_k$} In this case, we simply have $\err(p, \eta {\oplus} y)=\err(p, \eta)$ since the changes in the error only occurs in the interval $I_k$.

\smallskip
\paragraph{\emph{Case 2:} $p=y$} Due to (1), we have that $J(p,\eta{\oplus}y) = 0 \le J(p,\eta)$.

\smallskip
\paragraph{\emph{Case 3:} $p\in I_k,\; p\ne y$} As in the proof of (2), we find that
\begin{align*}
    \mathfrak{I}_\eta[f](p) - \mathfrak{I}_{\eta \oplus y}[f](p)
    \ge \frac{m}{2} (x_{k+1} - y)(p-x_k)\qquad\text{for $p\in(x_k,y)$},
\end{align*}
and
\begin{align*}
    \mathfrak{I}_\eta[f](p) - \mathfrak{I}_{\eta \oplus y}[f](p)
    \ge \frac{m}{2} (x_{k+1} - p)(y-x_k)\qquad\text{for $p\in(y,x_{k+1})$}.
\end{align*}
Putting the estimates together, we obtain
\begin{align*}
    J(p,\eta) - J(p,\eta{\oplus}y) &= \mathfrak{I}_\eta[f](p) - \mathfrak{I}_{\eta \oplus y}[f](p) \\
    &\ge \frac{m}{M}\biggl[\frac{x_{k+1} - y}{x_{k+1} - p} \mathbf{1}_{(x_k,y)}(p) + \frac{y-x_k}{p-x_k} \mathbf{1}_{(y,x_{k+1})}(p)\biggr]\bigl(\mathfrak{I}_\eta[f](p) - f(p)\bigr),
\end{align*}
which is point (3) of the assertion.
\end{proof}

We note that Lemma~\ref{lemma: interpolation error decreasing} directly implies that the global error function 
\[
    \scrG{:}\Omega\to [0,+\infty),\quad  \eta\mapsto \scrG(\eta) \coloneq \int_a^b \err(q, \eta) \, dq,
\]
satisfies $\scrG(\eta {\oplus} y) \le \scrG(\eta)$. 


Before proving Lemma~\ref{lemma: assumptions satisfied}, we state one more lemma that allows us to conclude that $\scrG$ satisfies Assumption~\ref{assumption: average error}\ref{item: improvement factor}.

\begin{lemma}
\label{lemma: interpolation assumption3}
    Let $f\in C^2[a,b]$ be a strongly $m$-convex function with $
        0 < m \le f'' \le M$ on $[a,b]$. Further, let $\mu \in (0, 1/2)$, $\delta \coloneq (1-2\mu) > 0$, and $B_\mu(\eta)$ be given by~\eqref{eq: B(eta)}. If $\lambda(\eta, B_\mu(\eta)) \ge \delta$, then
    \[
        \int_{B_\mu(\eta)} \bigl[\scrG(\eta) - \scrG(\eta {\oplus} y)\bigr]\lambda(\eta, dy) \ge \frac{\delta \mu m}{M} \scrG(\eta)\qquad\text{for every $\eta\in\Omega$}.
    \]
\end{lemma}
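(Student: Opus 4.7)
The plan is to localize the improvement to each sub-interval, use the pointwise comparison from Lemma~\ref{lemma: interpolation error decreasing}(3), and then aggregate using the mass bound on $B_\mu(\eta)$. First, I would observe that adding a point $y \in (x_k, x_{k+1})$ only perturbs the piecewise linear interpolant on that one sub-interval, so the global improvement $\scrG(\eta) - \scrG(\eta \oplus y)$ coincides with the local improvement $\scrG_k(\eta) - \scrG_k(\eta \oplus y)$, where I set $\scrG_k(\eta) \coloneq \int_{x_k}^{x_{k+1}} J(q,\eta)\,dq$.

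Second, I would fix $y \in I_k^\mu$ and apply Lemma~\ref{lemma: interpolation error decreasing}(3). The defining inequalities $y-x_k \ge \mu L_k$ and $x_{k+1}-y \ge \mu L_k$ (with $L_k \coloneq x_{k+1}-x_k$), together with $x_{k+1}-q \le L_k$ for $q \in (x_k,y)$ and $q-x_k \le L_k$ for $q \in (y,x_{k+1})$, force both ratios appearing there to be at least $\mu$. Hence the pointwise estimate
\[
J(q,\eta) - J(q,\eta\oplus y) \;\ge\; \frac{m\mu}{M}\, J(q,\eta), \qquad q \in (x_k,x_{k+1}),\; y \in I_k^\mu,
\]
holds uniformly in $q$, which upon integration in $q$ yields the per-sub-interval bound $\scrG(\eta) - \scrG(\eta\oplus y) \ge \frac{m\mu}{M}\scrG_k(\eta)$ for every $y \in I_k^\mu$.

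Third, since the collection $\{I_k^\mu\}_{k=0}^{\sfN(\eta)}$ is disjoint with union $B_\mu(\eta)$, I would split the $\lambda$-integral and sum, obtaining
\[
\int_{B_\mu(\eta)}[\scrG(\eta)-\scrG(\eta\oplus y)]\,\lambda(\eta,dy) \;\ge\; \frac{m\mu}{M}\sum_{k=0}^{\sfN(\eta)} \scrG_k(\eta)\,\lambda(\eta,I_k^\mu).
\]
The main obstacle, and where the real work will be, is to extract the factor $\delta$ from this weighted sum to recover the desired lower bound $\frac{\delta\mu m}{M}\scrG(\eta) = \frac{\delta\mu m}{M}\sum_k \scrG_k(\eta)$. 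The only input available is the total-mass bound $\sum_k \lambda(\eta, I_k^\mu) = \lambda(\eta, B_\mu(\eta)) \ge \delta$, which does not on its own force $\sum_k \scrG_k(\eta)\lambda(\eta,I_k^\mu) \ge \delta \sum_k \scrG_k(\eta)$ for an arbitrary weight sequence $\{\scrG_k(\eta)\}$. The natural way to close the estimate is to read Property~\ref{assumption: interpolation transition kernel} at the level of individual gaps---i.e., that $\lambda$ places mass at least $\delta$ on each bulk region $I_k^\mu$, which is consistent with viewing $\lambda$ as first selecting a gap and then drawing a point away from its endpoints---and under this per-sub-interval interpretation the aggregation becomes immediate by linearity.
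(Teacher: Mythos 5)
Your first two steps are exactly the substance of the paper's own argument, carried out correctly: localization of the improvement to the gap containing $y$, the pointwise bound of Lemma~\ref{lemma: interpolation error decreasing}(3), and the observation that both ratios are at least $\mu$ when $y\in I_k^\mu$, which integrates to $\scrG(\eta)-\scrG(\eta{\oplus}y)\ge\frac{\mu m}{M}\int_{x_k}^{x_{k+1}}J(q,\eta)\,dq$ for $y\in I_k^\mu$. The divergence is at the aggregation step. The paper closes the estimate by bounding the $k$-th term below by $\mu\frac{m}{M}\,\lambda(\eta,I_k^\mu)\int_{[a,b]}J(q,\eta)\,dq$, i.e.\ it lets the inner $q$-integral run over all of $[a,b]$ instead of over $(x_k,x_{k+1})$, and then sums over $k$ to extract $\lambda(\eta,B_\mu(\eta))\,\scrG(\eta)\ge\delta\,\scrG(\eta)$. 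That step is not justified: the inequality of Lemma~\ref{lemma: interpolation error decreasing}(3) has both sides equal to zero for $q\notin(x_k,x_{k+1})$, so the computation really only yields your weighted sum $\frac{\mu m}{M}\sum_k\lambda(\eta,I_k^\mu)\,\scrG_k(\eta)$, with $\scrG_k(\eta)=\int_{x_k}^{x_{k+1}}J(q,\eta)\,dq$, and not $\frac{\mu m}{M}\bigl(\sum_k\lambda(\eta,I_k^\mu)\bigr)\scrG(\eta)$.

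So the obstacle you flag is real and cannot be removed under the stated hypothesis: the claimed inequality actually fails for configurations with many well-distributed points. Take $f(x)=x^2/2$ on $[0,1]$ (so $m=M=1$) and $\eta$ with $n$ equally spaced nodes, gap length $\ell=1/(n+1)$. Adding a single $y$ changes the interpolant only on the gap containing it, so $\scrG(\eta)-\scrG(\eta{\oplus}y)\le\max_k\scrG_k(\eta)=\ell^3/12$; since $\lambda(\eta,P)=1$, the left-hand side is at most $\ell^3/12$, while the right-hand side equals $\delta\mu\,\scrG(\eta)=\delta\mu\,\ell^2/12$, and the inequality is violated once $n+1>1/(\delta\mu)$ --- even for $\lambda_{\mathrm{unif}}$, which satisfies Property~\ref{assumption: interpolation transition kernel} by Lemma~\ref{lemma: mass B_eta}. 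Your proposed repair, reading Property~\ref{assumption: interpolation transition kernel} as a per-gap bound $\lambda(\eta,I_k^\mu)\ge\delta$, would indeed make the aggregation immediate, but it is incompatible with $\lambda(\eta,\cdot)$ being a probability measure as soon as $\sfN(\eta)+1>1/\delta$, and it is not what Lemma~\ref{lemma: mass B_eta} verifies for $\lambda_{\mathrm{unif}}$ or $\lambda_{\mathrm{rpdm}}$. The honest endpoint of your (and the paper's) computation is the bound $\frac{\mu m}{M}\sum_k\lambda(\eta,I_k^\mu)\scrG_k(\eta)$, which for these kernels gives an improvement factor deteriorating with $\sfN(\eta)$ (i.e.\ $\beta>0$ in Assumption~\ref{assumption: average error}\,\ref{item: improvement factor}) rather than the $\sfN$-independent factor claimed in the lemma; in short, your proof is incomplete only because the statement itself, and the paper's proof of it, are defective at precisely the point you identified.
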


\begin{proof}
    From Lemma~\ref{lemma: interpolation error decreasing}(3), we deduce that
    \begin{align*}
    &\int_{B_\mu(\eta)} \bigl[\scrG(\eta) - \scrG(\eta{\oplus}y)\bigr]\lambda(\eta,dy) = \iint_{[a,b]\times B_\mu(\eta)} \bigl[ J(q,\eta)- J(q,\eta{\oplus} y)\bigr]\lambda(\eta,dy)\,dq \\
    &\hspace{2em}\ge \frac{m}{M}\sum_{k=0}^{n-1}\iint_{[a,b]\times I_k^\mu} \biggl[\frac{x_{k+1} - y}{x_{k+1} - q} \mathbf{1}_{(q,x_{k+1})}(y) + \frac{y-x_k}{q-x_k} \mathbf{1}_{(x_k,q)}(y)\biggr]J(q,\eta) \lambda(\eta,dy)\,dq \\
    &\hspace{2em}\ge \mu\frac{m}{M}\sum_{k=0}^{n-1}\lambda(\eta,I_k^\mu)\int_{[a,b]} J(q,\eta)\,dq = \mu(1-2\mu)\frac{m}{M} \scrG(\eta),
\end{align*}
where we used Fubini to interchange the order of the integral and the fact that
\[
    \frac{x_{k+1} - y}{x_{k+1} - q} \mathbf{1}_{(q,x_{k+1})}(y) + \frac{y-x_k}{q-x_k} \mathbf{1}_{(x_k,q)}(y) \ge \mu \mathbf{1}_{(x_k,x_{k+1})}(y)\qquad\text{for almost every $y\in I_k^\mu$}.\qedhere
\]
\end{proof}

\begin{proof}[Proof of Lemma~\ref{lemma: assumptions satisfied}]
We show that all the items in Assumption~\ref{assumption: average error} are satisfied.
\begin{enumerate}
    \item Let $x_0 = a$ and $x_1 = b$. Let
    \[\mathfrak{I}_0[f](x) = \frac{f(b)-f(a)}{b-a}(x-a) + f(a).\]
    Then for any $\eta_0 = (p, \phi, \ldots)$, Lemma~\ref{lemma: interpolation error decreasing} implies that
    \begin{align*}
        \scrG(\eta_0) \le \int_P |\mathfrak{I}_0[f](x) -f(x)| \le c_0,
    \end{align*}
    where
    \begin{align*}
        c_0 = (b-a)\cdot \max_{x \in [a,b]} |\mathfrak{I}_0[f](x) - f(x)|.
    \end{align*}
    \item This is a consequence of Lemma~\ref{lemma: interpolation error decreasing}.
    \item[(3')] Let $\gamma = \frac{\mu m}{M} \in (0,1)$, let $\beta = 0$, let $\delta = 1-2\mu > 0$, and for every $\eta \in \Omega$, let $B_\mu(\eta)$ be given by~\eqref{eq: B(eta)}. Then this item follows from Lemma~\ref{lemma: interpolation assumption3}.
\end{enumerate}
\end{proof}

\begin{proof}[Proof of Theorem~\ref{theorem: interpolation C2}]
    Since $f \in C^2([a,b])$, setting $h_\alpha \coloneq \alpha \frac{|x|^2}{2}$, we have that the function
    \[
    x \mapsto f_\alpha(x)\coloneq f(x) + h_\alpha(x)\qquad\text{is strongly convex for $\alpha > c_f$},
    \]
    since 
    \[m_{f_\alpha} \coloneq \alpha - c_f \le f''(x) \le \alpha + c_f \coloneq M_{f_\alpha},\]
    for all $x \in [a,b]$. Furthermore, $h_\alpha$ is strongly convex, since $h_\alpha''(x) = \alpha$ for all $x \in [a,b]$.
    Moreover,
    \begin{align*}
        \|\mathfrak{I}_{\eta_t}[f] - f\|_{L_1} &= \|\mathfrak{I}_{\eta_t}[f_\alpha - h_\alpha] - (f_\alpha - h_\alpha)\|_{L_1}\\
        &\le \|\mathfrak{I}_{\eta_t}[f_\alpha] - f_\alpha\|_{L_1} + \|\mathfrak{I}_{\eta_t}[h_\alpha] - h_\alpha\|_{L_1}.
    \end{align*}
    Let $\mu \in (0, 1/2)$, and let $\eps > 0$. 
    We can follow the proof of Lemma~\ref{lemma: assumptions satisfied}, and use the result of Theorem~\ref{thm: convergence rate average} to conclude
    \begin{align*}
        \p\Big(\|\mathfrak{I}_{\eta_t}[f_\alpha] - f_\alpha\|_{L_1} > \frac{\eps}{2}\Big) \le \frac{2}{\eps} \|\mathfrak{I}_{\eta_0}[f_\alpha] - f_\alpha\|_{L_1} e^{-\gamma_{f_\alpha} \delta t}, 
    \end{align*}
    with $\gamma_{f_\alpha} \coloneq \mu \frac{\alpha - c_f}{\alpha + c_f}$, and 
    \begin{align*}
        \p\Big(\|\mathfrak{I}_{\eta_t}[h_\alpha] - h_\alpha\|_{L_1} > \frac{\eps}{2}\Big) \le \frac{2}{\eps} \|\mathfrak{I}_{\eta_0}[h_\alpha] - h_\alpha\|_{L_1} e^{-\gamma_{h_\alpha} \delta t},
    \end{align*}
    with $\gamma_{h_\alpha} = \mu$. 
    
    We note that $\gamma_{f_\alpha} \le \gamma_{h_\alpha}$ for every $\alpha > c_f$. Let $m_\scrG \coloneq 4 \max\{\|\mathfrak{I}_{\eta_0}[f_\alpha] - f_\alpha\|_{L_1}, \|\mathfrak{I}_{\eta_0}[h_\alpha] - h_\alpha\|_{L_1}\}$, then it holds that
    \begin{align*}
        \p\Big(\|\mathfrak{I}_{\eta_t}[f] - f\|_{L_1} > \eps\Big) &= \p\Big(\|\mathfrak{I}_{\eta_t}[f_\alpha] - f_\alpha\|_{L_1} > \frac{\eps}{2}\Big)  + \p\Big(\|\mathfrak{I}_{\eta_t}[h_\alpha] - h_\alpha\|_{L_1} > \frac{\eps}{2}\Big)\\
        &\le \frac{m_\scrG}{\eps} e^{-\gamma_{f_\alpha} \delta t} \quad \text{for} \, t \ge 1. 
    \end{align*}
    This concludes the proof.
\end{proof}



\section{Randomized Polytope Division Method}
\label{sec: rpdm}
Several of the results in this work depend on general assumptions on the transition kernel $\lambda$.
In this section, we consider specific choices for this transition kernel and show these kernels satisfy the assumptions. In particular, we consider the transition kernel corresponding to a randomized version of the Polytope Division Method (R-PDM), and a uniform transition kernel. We first describe R-PDM.

\subsection{Randomized Polytope Division Method}
R-PDM is an algorithm that divides a \emph{hyperrectangle} parameter set $P$ into polytopes and searches for regions where the local error $J$ is large. Each configuration $\eta = (\eta_1,\ldots, \eta_n, \phi, \ldots)$ corresponds to a specific set $\calD(\eta)$ of polytopes that divide the parameter set $P$ (detailed construction of $\calD(\eta)$ can be found below). The transition kernel corresponding to R-PDM is given by
\begin{align}
\label{eq: rate function rpdm}
\lambda_{\rm{rpdm}}(\eta, dy) \coloneq \sum_{D \in \calD(\eta)} \frac{e^{\alpha \,  \dashintfrac_D \err(q, \eta) \, dq}}{Z} \mathrm{unif}_D(dy),
\end{align}
where $\alpha >0$, $Z$ is a normalization factor and $\mathrm{unif}_D$ is the uniform measure on the polytope $D$.

This transition kernel is purely theoretical because, in practice, computing $\dashint_D \err(q, \eta) \, dq$ exactly is often infeasible. Therefore, we approximate this integral in practical applications by 
\begin{align}
\label{eq: average error approximation}
    \dashint_D \err(q, \eta) \, dq\approx \err(p_D, \eta),
\end{align}
where $p_D \in D$. As an initial approximation, let $p_D \coloneq \sfb_D$, with $\sfb_D$ being the barycenter of $D$. Since the barycenter could lead to a bad approximation of the integral $\dashint_{D} \err(q, \eta) \, dq$, we can replace $\sfb_D$ by an arbitrary point $p_D$ whenever $\err(\sfb_D, \eta) < \epsilon$ for some preset tolerance $\epsilon$.

\begin{remark}
    The approximation given by~\eqref{eq: average error approximation} with $p_D=\sfb_D$ is the midpoint rule with one quadrature point. The approximation can be improved by either using more quadrature points or using a stochastic integrator at the expense of computational efficiency and scalability.
\end{remark}

The transition kernel $\lambda$ depends on the polytope division $\calD(\eta)$. In R-PDM, this division is refined after $\eta$ transitions to a new state $\eta{\oplus} y$. This refinement is based on an operation called \emph{facet linking}, which is defined as in \cite{nielen2025polytope} by
\begin{definition}
    Let $P \subset \R^d$ be a polytope and $p \in P$  be an arbitrary point. Furthermore, let $\partial P$ denote the set of facets of $P$.
    Then the \emph{facet linking operator} $\FL$ is given by
    \begin{align*}
        \FL(p, P) = \big\{\conv(p\cup F) \, :\, F\in\partial P\big\}.
    \end{align*}
\end{definition}
In other words, the facet linking operator divides a polytope by connecting a point $p$ to all facets $F \in \partial P$.
The polytope division $\calD(\eta)$ depends on the facet linking operator in the following way: Suppose $\eta$ is a state in R-PDM with polytope division $\calD(\eta)$, and $\eta \mapsto \eta {\oplus} y$ for some $y \in P$ with $y \in D \in \calD(\eta)$, then $\calD(\eta {\oplus} y) = \calD(\eta) \backslash D {\cup} \FL(y, D)$. Figure~\ref{fig: PDM2d} displays an example of the first steps of R-PDM in a $2$-dimensional case. The method is summarized in the following algorithm:
\begin{algorithm}
\caption{Randomized Polytope Division Method}\label{rpdm}
\begin{algorithmic}[1]
\State Initialize number of points $n$, constant $\alpha >0$, and tolerance $\epsilon > 0$
\State $k\gets 1$
\State Choose $p \in \interior{P}$
\State Set $\eta \coloneq (p, \phi, \ldots)$
\State Set $\calD\coloneq\{D\in \FL(p, P)\}$
\State $p_D = \sfb_D$ for all $D \in \calD$
\While{$k \, < \, n$}
\State Compute $\err(p_D, \eta)$ for all $D \in \calD$.
\State Sample $D \in \calD(\eta)$ with probability weighted by $e^{\alpha \err(p_D, \, \eta)}$
\State Sample $y \in D$ according to $\mathrm{unif}_D$
\State $\eta \gets \eta {\oplus} y$
\State Resample $p_E \in E$ uniformly in $E$ for all $E \in \calD(\eta) \backslash D$ with $\err(p_E, \eta) < \epsilon$
\State $\calD \gets (\calD \backslash \{D\}) \cup \FL(y, D)$
\State For all $E \in \FL(y, D)$, set $p_E = \sfb_E$
\State $k \gets k+1$
\EndWhile
\end{algorithmic}
\end{algorithm}

\begin{figure}[h]
\centering
    \begin{subfigure}{0.24\linewidth}
        \centering
        \begin{tikzpicture}
        \def\d{2};
        \def\o{2};

        \coordinate (S1bl) at (0,0);
        \coordinate (S1br) at (\d,0);
        \coordinate (S1tr) at (\d,\d);
        \coordinate (S1tl) at (0,\d);
        \coordinate (S1c) at (\d/2,\d/2);
        \coordinate (bc1) at (\d/4, \d/2);
        \coordinate (bc2) at (\d/2, \d/4);
        \coordinate (bc3) at (3*\d/4, \d/2);
        \coordinate (bc4) at (\d/2, 3*\d/4);


        \draw[thick, color=black] (S1bl) -- (S1br) -- (S1tr) -- (S1tl) -- (S1bl);
        \draw[thick, color=red] (S1bl) -- (S1c) -- (S1br);
        \draw[thick, color=red] (S1tl) -- (S1c) -- (S1tr);
        \draw[draw=none, fill=red] (S1c) circle (2pt);

        \coordinate (S1trc) at (3*\d/4,3*\d/4);
        \coordinate (S1tc) at (\d/2,\d);
        \coordinate (S1tlc) at (\d/4,3*\d/4);

\end{tikzpicture}
        \caption*{Step (1)}
        \label{fig: sf1: PDM2d}
    \end{subfigure}
    \begin{subfigure}{0.24\linewidth}
        \centering
        \begin{tikzpicture}
        \def\d{2};
        \def\o{2};

        \coordinate (S1bl) at (0,0);
        \coordinate (S1br) at (\d,0);
        \coordinate (S1tr) at (\d,\d);
        \coordinate (S1tl) at (0,\d);
        \coordinate (S1c) at (\d/2,\d/2);
        \coordinate (bc1) at (\d/6, \d/2);
        \coordinate (bc2) at (\d/2, \d/6);
        \coordinate (bc3) at (5*\d/6, \d/2);
        \coordinate (bc4) at (\d/2, 5*\d/6);


        \draw[draw=none, fill = blue] plot[mark=x] coordinates{(bc1)};
        \draw[draw=none, fill = blue] plot[mark=x] coordinates{(bc2)};
        \draw[draw=none, fill = blue] plot[mark=x] coordinates{(bc3)};
        \draw[draw=none, fill = blue] plot[mark=x] coordinates{(bc4)};

        \draw[thick, color=black] (S1bl) -- (S1br) -- (S1tr) -- (S1tl) -- (S1bl);
        \draw[thick, color=red] (S1bl) -- (S1c) -- (S1br);
        \draw[thick, color=red] (S1tl) -- (S1c) -- (S1tr);
        \draw[draw=none, fill=red] (S1c) circle (2pt);

        \coordinate (S1trc) at (3*\d/4,3*\d/4);
        \coordinate (S1tc) at (\d/2,\d);
        \coordinate (S1tlc) at (\d/4,3*\d/4);

\end{tikzpicture}
        \caption*{Step (2)}
        \label{fig: sf2: PDM2d}
    \end{subfigure}
    \begin{subfigure}{0.24\linewidth}
        \centering
        \begin{tikzpicture}
        \def\d{2};
        \def\o{2};

        \coordinate (S1bl) at (0,0);
        \coordinate (S1br) at (\d,0);
        \coordinate (S1tr) at (\d,\d);
        \coordinate (S1tl) at (0,\d);
        \coordinate (S1c) at (\d/2,\d/2);
        \coordinate (bc1) at (\d/7, 3*\d/5);
        \coordinate (bc2) at (\d/3, 3*\d/14);
        \coordinate (bc3) at (5*\d/6, \d/2);
        \coordinate (bc4) at (\d/2+\d/10, 7*\d/8);



        \draw[thick, color=black] (S1bl) -- (S1br) -- (S1tr) -- (S1tl) -- (S1bl);
        \draw[thick, color=red] (S1bl) -- (S1c) -- (S1br);
        \draw[thick, color=red] (S1tl) -- (S1c) -- (S1tr);
        \draw[draw=none, fill=red] (S1c) circle (1pt);
        \draw[draw=none, fill=red] (bc1) circle (1pt);

        \coordinate (S1trc) at (3*\d/4,3*\d/4);
        \coordinate (S1tc) at (\d/2,\d);
        \coordinate (S1tlc) at (\d/4,3*\d/4);

\end{tikzpicture}
        \caption*{Step (3)}
        \label{fig: sf3: PDM2d}
    \end{subfigure}
    \begin{subfigure}{0.24\linewidth}
        \centering
        \begin{tikzpicture}
        \def\d{2};
        \def\o{2};

        \coordinate (S1bl) at (0,0);
        \coordinate (S1br) at (\d,0);
        \coordinate (S1tr) at (\d,\d);
        \coordinate (S1tl) at (0,\d);
        \coordinate (S1c) at (\d/2,\d/2);
        \coordinate (bc1) at (\d/6, \d/2);
        \coordinate (bc2) at (\d/2, \d/6);
        \coordinate (bc3) at (5*\d/6, \d/2);
        \coordinate (bc4) at (\d/2, 5*\d/6);



        \draw[draw = none, fill = blue, opacity=0.45] (S1bl) -- (S1c) -- (S1tl);
        
        \draw[thick, color=black] (S1bl) -- (S1br) -- (S1tr) -- (S1tl) -- (S1bl);
        \draw[thick, color=red] (S1bl) -- (S1c) -- (S1br);
        \draw[thick, color=red] (S1tl) -- (S1c) -- (S1tr);
        \draw[draw=none, fill=red] (S1c) circle (2pt);

        \coordinate (S1trc) at (3*\d/4,3*\d/4);
        \coordinate (S1tc) at (\d/2,\d);
        \coordinate (S1tlc) at (\d/4,3*\d/4);

\end{tikzpicture}
        \caption*{Step (4)}
        \label{fig: sf4: PDM2d}
    \end{subfigure}
    \begin{subfigure}{0.24\linewidth}
        \centering
        \begin{tikzpicture}
        \def\d{2};
        \def\o{2};

        \coordinate (S1bl) at (0,0);
        \coordinate (S1br) at (\d,0);
        \coordinate (S1tr) at (\d,\d);
        \coordinate (S1tl) at (0,\d);
        \coordinate (S1c) at (\d/2,\d/2);
        \coordinate (bc1) at (\d/6, \d/2);
        \coordinate (bc2) at (\d/2, \d/6);
        \coordinate (bc3) at (5*\d/6, \d/2);
        \coordinate (bc4) at (\d/2, 5*\d/6);


        \draw[draw=none, fill = blue] plot[mark=x] coordinates{(bc1)};
        \draw[draw=none, fill = blue] plot[mark=x] coordinates{(bc2)};
        \draw[draw=none, fill = blue] plot[mark=x] coordinates{(bc3)};
        \draw[draw=none, fill = blue] plot[mark=x] coordinates{(bc4)};

        \draw[thick, color=black] (S1bl) -- (S1br) -- (S1tr) -- (S1tl) -- (S1bl);
        \draw[thick, color=red] (S1bl) -- (S1c) -- (S1br);
        \draw[thick, color=red] (S1tl) -- (S1c) -- (S1tr);
        \draw[draw=none, fill=red] (S1c) circle (2pt);
        \draw[draw=violet, fill= none] (bc4) circle (4pt);
        \draw[draw=violet, fill= none] (bc2) circle (4pt);

        \coordinate (S1trc) at (3*\d/4,3*\d/4);
        \coordinate (S1tc) at (\d/2,\d);
        \coordinate (S1tlc) at (\d/4,3*\d/4);

\end{tikzpicture}
        \caption*{Step (5)}
        \label{fig: sf5: PDM2d}
    \end{subfigure}
    \begin{subfigure}{0.24\linewidth}
        \centering
        \begin{tikzpicture}
        \def\d{2};
        \def\o{2};

        \coordinate (S1bl) at (0,0);
        \coordinate (S1br) at (\d,0);
        \coordinate (S1tr) at (\d,\d);
        \coordinate (S1tl) at (0,\d);
        \coordinate (S1c) at (\d/2,\d/2);
        \coordinate (bc1) at (\d/7, 3*\d/5);
        \coordinate (bc2) at (\d/3, 3*\d/14);
        \coordinate (bc3) at (5*\d/6, \d/2);
        \coordinate (bc4) at (\d/2+\d/10, 7*\d/8);


        \draw[draw=violet, fill = violet] plot[mark=x] coordinates{(bc2)};
        \draw[draw=violet, fill = violet] plot[mark=x] coordinates{(bc4)};

        \draw[thick, color=black] (S1bl) -- (S1br) -- (S1tr) -- (S1tl) -- (S1bl);
        \draw[thick, color=red] (S1bl) -- (S1c) -- (S1br);
        \draw[thick, color=red] (S1tl) -- (S1c) -- (S1tr);
        \draw[draw=none, fill=red] (S1c) circle (1pt);

        \coordinate (S1trc) at (3*\d/4,3*\d/4);
        \coordinate (S1tc) at (\d/2,\d);
        \coordinate (S1tlc) at (\d/4,3*\d/4);

\end{tikzpicture}
        \caption*{Step (6)}
        \label{fig: sf6: PDM2d}
    \end{subfigure}
    \begin{subfigure}{0.24\linewidth}
        \centering
        \begin{tikzpicture}
        \def\d{2};
        \def\o{2};

        \coordinate (S1bl) at (0,0);
        \coordinate (S1br) at (\d,0);
        \coordinate (S1tr) at (\d,\d);
        \coordinate (S1tl) at (0,\d);
        \coordinate (S1c) at (\d/2,\d/2);
        \coordinate (bc1) at (\d/7, 3*\d/5);
        \coordinate (bc2) at (\d/2, \d/6);
        \coordinate (bc3) at (5*\d/6, \d/2);
        \coordinate (bc4) at (\d/2, 5*\d/6);



        \draw[thick, color=black] (S1bl) -- (S1br) -- (S1tr) -- (S1tl) -- (S1bl);
        \draw[thick, color=red] (S1bl) -- (S1c) -- (S1br);
        \draw[thick, color=red] (S1tl) -- (S1c) -- (S1tr);
        \draw[thick, color=red] (S1bl) -- (bc1) -- (S1c);
        \draw[thick, color=red] (bc1) -- (S1tl);
        \draw[draw=none, fill=red] (S1c) circle (1pt);
        \draw[draw=none, fill=red] (bc1) circle (1pt);

        \coordinate (S1trc) at (3*\d/4,3*\d/4);
        \coordinate (S1tc) at (\d/2,\d);
        \coordinate (S1tlc) at (\d/4,3*\d/4);

\end{tikzpicture}
        \caption*{Step (7)}
        \label{fig: sf7: PDM2d}
    \end{subfigure}
    \begin{subfigure}{0.24\linewidth}
        \centering
        \begin{tikzpicture}
        \def\d{2};
        \def\o{2};

        \coordinate (S1bl) at (0,0);
        \coordinate (S1br) at (\d,0);
        \coordinate (S1tr) at (\d,\d);
        \coordinate (S1tl) at (0,\d);
        \coordinate (S1c) at (\d/2,\d/2);
        \coordinate (bc1) at (\d/7, 3*\d/5);
        \coordinate (bc2) at (\d/3, 3*\d/14);
        \coordinate (bc3) at (5*\d/6, \d/2);
        \coordinate (bc4) at (\d/2+\d/10, 7*\d/8);


        \draw[thick, color=black] (S1bl) -- (S1br) -- (S1tr) -- (S1tl) -- (S1bl);
        \draw[thick, color=red] (S1bl) -- (S1c) -- (S1br);
        \draw[thick, color=red] (S1tl) -- (S1c) -- (S1tr);
        \draw[thick, color=red] (S1bl) -- (bc1) -- (S1c);
        \draw[thick, color=red] (bc1) -- (S1tl);
        \draw[draw=none, fill=red] (S1c) circle (1pt);
        \draw[draw=none, fill=red] (bc1) circle (1pt);

        \coordinate (bc11) at (\d/18,\d/2);
        \coordinate (bc12) at (\d/4,\d/3);
        \coordinate (bc13) at (\d/4,2*\d/3);

        \draw[draw=black, fill=none] plot[mark=x] coordinates{(bc11)};
        \draw[draw=black, fill=none] plot[mark=x] coordinates{(bc12)};
        \draw[draw=black, fill=none] plot[mark=x] coordinates{(bc13)};
        \draw[draw=violet, fill=none] plot[mark=x] coordinates{(bc2)};
        \draw[draw=black, fill=none] plot[mark=x] coordinates{(bc3)};
        \draw[draw=violet, fill=none] plot[mark=x] coordinates{(bc4)};

\end{tikzpicture}
        \caption*{Step (8)}
        \label{fig: sf8: PDM2d}
    \end{subfigure}
\caption{Depiction of the steps in R-PDM for the $2$-dimensional parameter case and split domain via facet linking. (1) Sample first parameter and divide $P$ via facet linking (2) Compute barycenters. (3) Select a point based on the transition kernel (4) Mark the polytope containing this point (5) Select parameters in other polytope with error function below tolerance (6) Sample new points to replace these barycenters. (7) Update polytope division. (8) Compute the new barycenters.}
\label{fig: PDM2d}
\end{figure}
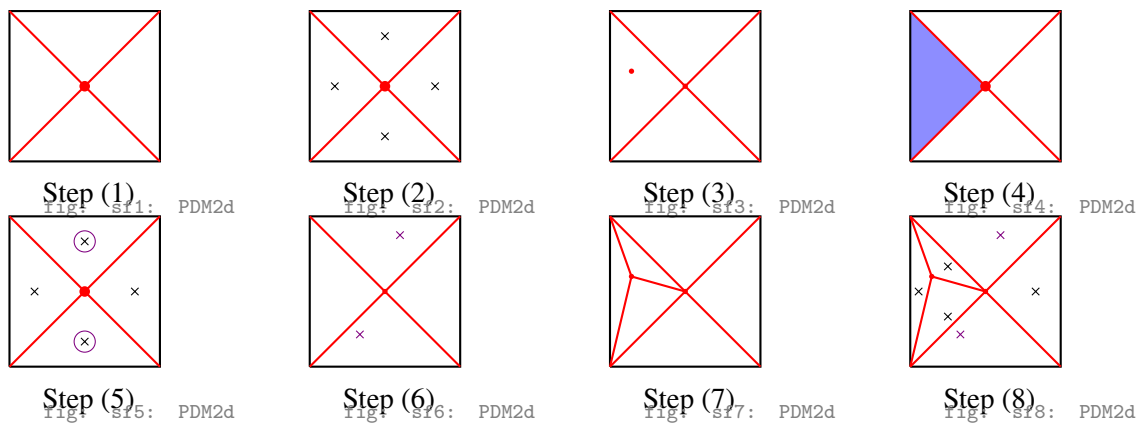

The idea behind R-PDM is to place more mass on regions of $P$ where the error $\err$ is higher. The transition kernel $\lambda_{\rm{rpdm}}$ satisfies Assumption~\ref{assumption: rate function}. Moreover, $\lambda_{\rm{rpdm}}(\eta, \cdot)$ is equivalent to the Lebesgue measure (i.e., $\lambda_{\rm{rpdm}}(\eta, \cdot) \sim \calL$), implying that the convergence results of Theorem~\ref{theorem: convergence J} and Theorem~\ref{theorem: average convergence} hold $\calL(dy) {\otimes} P_\infty$-almost everywhere. For applications where the global error function $\scrG$ satisfies Assumption~\ref{assumption: average error}, we establish convergence rates via Theorem~\ref{thm: convergence rate average}. More specifically, Theorem~\ref{theorem: interpolation C2} guarantees convergence for the interpolation of $C^2$ functions if $\lambda$ satisfies Assumption~\ref{assumption: interpolation transition kernel}. In Lemma~\ref{lemma: mass B_eta} below,  we show that this is indeed the case for $\lambda_{\mathrm{rpdm}}$.

However, there exist other transition kernels that are equivalent to the Lebesgue measure that satisfy the same property. One simple example is the uniform measure \[\lambda_{\mathrm{unif}}(\eta, \cdot) \coloneq \mathrm{unif}_P.\]
The selection of points via a uniform measure is computationally more efficient than R-PDM. However, computational efficiency is not the only factor to consider.

First, we must note that the estimates in the proof of Theorem~\ref{thm: convergence rate average} are crude. We consider estimates that hold uniformly in time and, therefore, disregard potential differences in local-in-time improvements. Indeed, in Assumption~\ref{assumption: average error}, we assume that $\lambda(\eta, B(\eta)) \ge \delta$ holds uniformly in time for some $\delta > 0$, leading to a bound that cannot be expected to be tight for every transition kernel as the measure of $B(\eta)$ can significantly exceed $\delta$ for certain $\eta$. Since R-PDM places more mass on regions with high error, we can locally expect this assumption to hold for a larger set $B(\eta)$ than $\lambda_\mathrm{unif}$.

Secondly, the convergence results hold with high probability, but in practice there might be a large difference in variance. A high variance indicates that, even though the error converges to zero in the expected form, single runs of the algorithm could lead to poor approximated solutions to the COP. A user might prefer an algorithm with lower variance to have more confidence in the convergence results of individual runs. At this moment, none of these additional factors appear in the theoretical results. We present numerical tests to show their importance and compute the variance of several greedy-type algorithms. 


\medskip
We now show that $\lambda_{\mathrm{rpdm}}$ and $\lambda_{\mathrm{unif}}$ indeed satisfy Property~\ref{assumption: interpolation transition kernel}, which implies the exponential convergence result of Theorem~\ref{theorem: interpolation C2}.
\begin{lemma}
    \label{lemma: mass B_eta}
    Let $\eta \in \Omega$, let $\mu \in (0, 1/2)$, and $B_\mu(\eta)$ be given by~\eqref{eq: B(eta)}. Then Property~\ref{assumption: interpolation transition kernel} holds:
    \begin{align*}
        \lambda_{\mathrm{unif}}(\eta, B_\mu(\eta)) = \lambda_{{\rm rpdm}}(\eta, B_\mu(\eta)) = 1-2\mu > 0.
    \end{align*}
\end{lemma}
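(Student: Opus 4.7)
The plan is to reduce both computations to the one-dimensional structure of the polytope division generated by facet linking, and then to exploit the convex-combination form of $\lambda_{\mathrm{rpdm}}$.

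First I would pin down $\calD(\eta)$ when $P=[a,b]\subset\R$. Starting from $P$ and iterating facet linking at each inserted point, a direct induction on $\sfN(\eta)$ shows that $\calD(\eta) = \{D_k : k=0,\ldots,n\}$ with $D_k = [x_k, x_{k+1}]$, where $(x_1,\ldots,x_n,\phi,\ldots)=\mfS(\eta)$ and $x_0=a,\,x_{n+1}=b$. This is the only step where the construction of $\calD$ enters, and it is essentially trivial in dimension one since each facet of $[x_k,x_{k+1}]$ is an endpoint.

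Next I would observe that by Definition~\ref{def: B(eta)}, $B_\mu(\eta)\cap D_k = I_k^\mu$, and this sub-interval has length $(1-2\mu)(x_{k+1}-x_k) = (1-2\mu)\,|D_k|$. In particular $\mathrm{unif}_{D_k}(B_\mu(\eta)) = \mathrm{unif}_{D_k}(I_k^\mu) = 1-2\mu$ for every $k$. For the uniform kernel this immediately yields
\[
  \lambda_{\mathrm{unif}}(\eta, B_\mu(\eta)) = \frac{1}{b-a}\sum_{k=0}^{n} |I_k^\mu| = \frac{1-2\mu}{b-a}\sum_{k=0}^{n}|D_k| = 1-2\mu.
\]

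For $\lambda_{\mathrm{rpdm}}$, the key point is that the weights $w_k \coloneq Z^{-1}\exp\!\bigl(\alpha\,\dashint_{D_k} J(q,\eta)\,dq\bigr)$ are nonnegative and, by the choice of $Z$, satisfy $\sum_k w_k = 1$. Using the disjoint decomposition of $B_\mu(\eta)$ and the previous identity $\mathrm{unif}_{D_k}(B_\mu(\eta))=1-2\mu$, I would compute
\[
  \lambda_{\mathrm{rpdm}}(\eta, B_\mu(\eta)) = \sum_{k=0}^{n} w_k\,\mathrm{unif}_{D_k}(B_\mu(\eta)) = (1-2\mu)\sum_{k=0}^{n} w_k = 1-2\mu,
\]
which is the claimed equality. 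Since $\mu\in(0,1/2)$, positivity is automatic. There is no real obstacle here; the only thing to be careful about is confirming that the facet-linking recursion in one dimension produces exactly the consecutive sub-intervals used implicitly in the definition of $B_\mu(\eta)$, so that the per-cell uniform measure of $B_\mu(\eta)$ is the same constant $1-2\mu$ independently of $k$, which is what allows the weights to factor out.
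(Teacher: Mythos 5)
Your proposal is correct and follows essentially the same route as the paper: both decompose $B_\mu(\eta)$ into the sub-intervals $I_k^\mu$, use $\mathrm{unif}_{D_k}(I_k^\mu)=1-2\mu$ with $D_k=[x_k,x_{k+1}]$, and let the normalized weights sum to one so that the factor $1-2\mu$ pulls out. The only difference is cosmetic: you spell out by induction that facet linking in one dimension produces exactly the consecutive intervals, a fact the paper invokes implicitly with ``by construction''.
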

\begin{proof}
We note that
\begin{align*}
    \lambda_{\mathrm{unif}}(\eta, B_\mu(\eta)) = \sum_{k = 0}^n \int_{I^\mu_k} \frac{1}{b-a} dy &= \sum_{k=0}^n \frac{|I^\mu_k|}{b-a} = (1-2\mu) \sum_{k=0}^n \frac{x_{k+1}-x_k}{b-a} = 1-2\mu > 0. 
\end{align*}
Similarly, for R-PDM, we note that by construction for every $D \in \calD(\eta)$ there exists precisely one $k \in \{0, \ldots, n\}$ such that $I^\mu_k \subset D$. We also denote this $D$ by $D_k$.
Therefore,
\begin{align*}
    \lambda_{\mathrm{rpdm}}(\eta, B_\mu(\eta)) &= \sum_{k=0}^n \int_{I^\mu_k} \sum_{D \in \calD(\eta)} \frac{e^{\alpha \dashintfrac_D \err(z, \eta) \, dz}}{Z} \mathrm{unif}_D(dy) \\
    &= \sum_{k=0}^n \frac{e^{\alpha \dashintfrac_{D_k} \err(z, \eta) \, dz}}{Z} \frac{|I^\mu_k|}{x_{k+1} - x_k} = (1-2\mu)\sum_{k=0}^n \frac{e^{\alpha \dashintfrac_{D_k} \err(z, \eta) \, dz}}{Z} = 1-2\mu>0,
\end{align*}
which concludes the proof.
\end{proof}

The convergence results hold for $\lambda_{\mathrm{rpdm}}$ and $\lambda_{\mathrm{unif}}$. In the numerical experiments, we also compute the error and variance for a classical weak greedy method (see~\eqref{eq: weak greedy}),
\begin{remark}
    The weak greedy method depends on a sample set $S \subset P$. A consequence of this formulation is that the Saturation Property of Assumption~\ref{assumption: average error} is not satisfied in this application. It is possible to reformulate Assumption~\ref{assumption: average error} and Theorem~\ref{thm: convergence rate average} by replacing all instances of $P$ by the discrete set $S$. Similarly, the error function~\eqref{eq: error interpolation} becomes
    \[
        \scrG_S(\eta) = \sum_{x \in S} J(x,\eta).
    \]
    In this reformulated framework, we can expect convergence results of $\scrG_S$ for the weak greedy method in the interpolation setting.
\end{remark}

\subsection{Numerical experiments}
\subsubsection{Example 1}
We run the stochastic greedy methods to compute interpolation nodes for three different functions. \footnote{The collection of all the codes used to generate the numerical results presented in this subsection can be found here: https://gitlab.tue.nl/s158446/interpolation.git} The first function $f{:} [0,5] \to [0,+\infty)$ is given by $f(x) = x^2 + \frac{1}{30} x^4$ and depicted in Figure~\ref{fig: increasing-exact function}. We note that this function is infinitely many times differentiable and strongly convex, so Assumption~\ref{assumption: average error} is satisfied due to Lemma~\ref{lemma: assumptions satisfied}. For R-PDM, we set $\alpha = 500$ and tolerance $\epsilon = 0.01$. 

\begin{figure}[h]
    \begin{subfigure}{.49\textwidth}
    \centering
    \includegraphics[width=\linewidth]{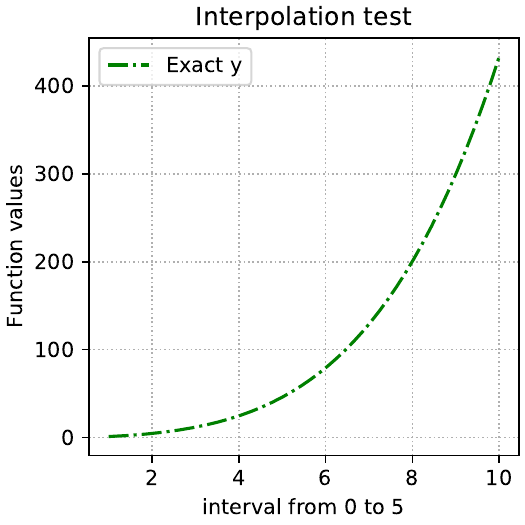}
    \caption{Exact function $f(x) = x^2 + \tfrac{1}{30}x^4$}
    \label{fig: increasing-exact function}
    \end{subfigure}
    \begin{subfigure}{.49\textwidth}
    \centering
    \includegraphics[width=\linewidth]{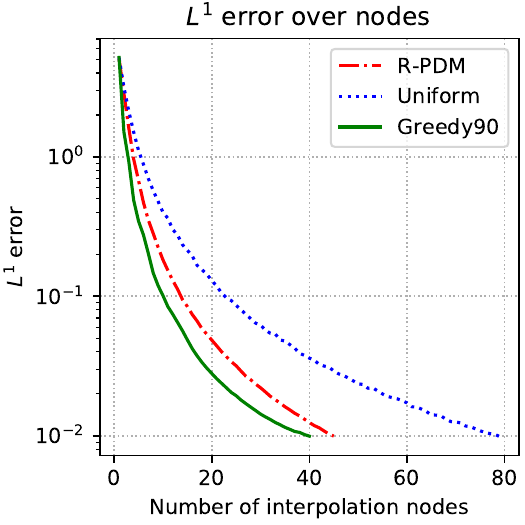}
    \caption{Decay of $L^1$-error over interpolation nodes}
    \label{fig: increasing-error decay}
    \end{subfigure}
    \caption{Example $1$}
\end{figure}

For a given $\eta_t$ with $t$ selected interpolation nodes, we can approximate the error $\scrG(\eta_t)$ by discretizing the interval $[0,5]$ into $L=500$ discretization points $(y_1, \ldots, y_{L})$ and use the quadrature rule over these discretization points. 
To obtain a robust comparison, we run the algorithm $K=1000$ times and compute the average error, i.e., we compute
\begin{align}
\label{eq: expectation interpolation}
 E_t \coloneq \frac{1}{KL}\sum_{k=1}^{K}\sum_{i=1}^{L} J(y_i,\eta_t^k).
\end{align}
We keep adding points until $E_t$ is below a tolerance of $10^{-2}$. This means that we could end up with different configuration lengths for the different algorithms.
The decay in error in the number of time steps is shown in Figure~\ref{fig: increasing-error decay}. We first note that the error of R-PDM is lower than the error of uniform sampling. As a result, the selection procedure stops at $45$ nodes for R-PDM and only at $79$ for uniform sampling. We also ran the weak greedy algorithm (see \ref{eq: weak greedy}) with a sample size of $90$. We note that the error for this algorithm is slightly lower than R-PDM. The downside of the greedy algorithm is that the user has to a priori select a sample size, and it is unclear what this size should be. In this specific example, we cannot reasonably expect to reach the tolerance of $0.01$ for the weak greedy method if the sample size is smaller than $79$, i.e., the selected number of nodes in the uniform algorithm.
Moreover, for the weak greedy method, we evaluated $3510$ error estimates, while R-PDM only required $1034$ evaluations.

\begin{figure}[h]
    \begin{subfigure}{.49\textwidth}
    \centering
    \includegraphics[width=\linewidth]{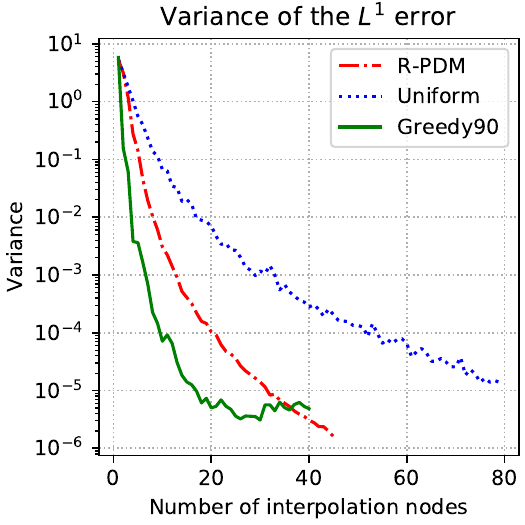}
    \caption{Variance of $L^1$-error per number of interpolation nodes of Example $1$}
    \label{fig: total-variance-increasing}
    \end{subfigure}
    \begin{subfigure}{.49\textwidth}
    \centering
    \includegraphics[width=\linewidth]{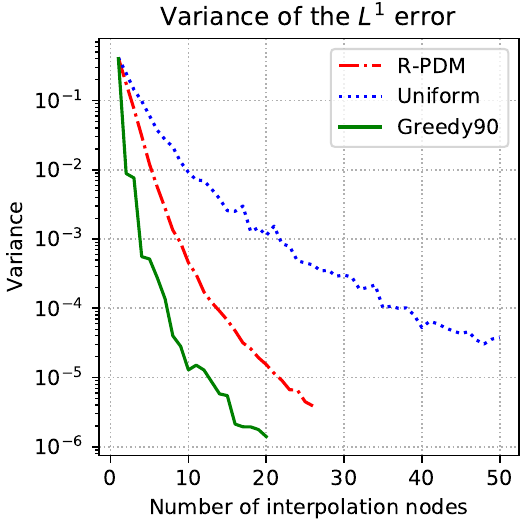}
    \caption{Variance of $L^1$-error per number of interpolation nodes of Example $2$}
    \label{fig: total-variance-convex}
    \end{subfigure}
    \caption{Comparing variances of $L^1$-error for Examples $1$ and $2$}
\end{figure}

We also compare the variances of the two methods. First, we compute the variance of $\err(q,\eta_t)$ for time steps $t$, i.e., we compute
\begin{align}
\label{eq: variance interpolation}
V_t \coloneq \frac{1}{K} \sum_{k = 1}^{K} \left(\frac{1}{L} \sum_{i = 1}^{L} J(y_i,\eta_t^k)  - E_t\right)^2,
\end{align}
where $E_t$ is given by~\eqref{eq: expectation interpolation}.
This result is given in Figure~\ref{fig: total-variance-increasing}. At first, the variance is lower for the greedy method than for both R-PDM and uniform sampling, but for the final configuration, the variance of R-PDM is lower than the variance for weak greedy sampling.
Despite the larger number of selected interpolation points, the variance of the final configuration achieved with uniform sampling is still greater than the variance for R-PDM. 

We also compute the average pointwise error $J(y_i,\eta_t)$ in the discretization points, $(y_1, \ldots, y_{L})$, for the final configuration $\eta_t$ (i.e., the configuration we find when reaching an $L^1$-error of $10^{-2}$). We again average this error over $K=1000$ runs, i.e., we compute
\begin{align}
\label{eq: num pointwise error interpolation}
    E_t(y_i) \coloneq \frac{1}{K}\sum_{k=1}^{K} J(y_i,\eta_t^k).
\end{align}

\begin{figure}[h]
    \begin{subfigure}{.49\textwidth}
    \centering
    \includegraphics[width=\linewidth]{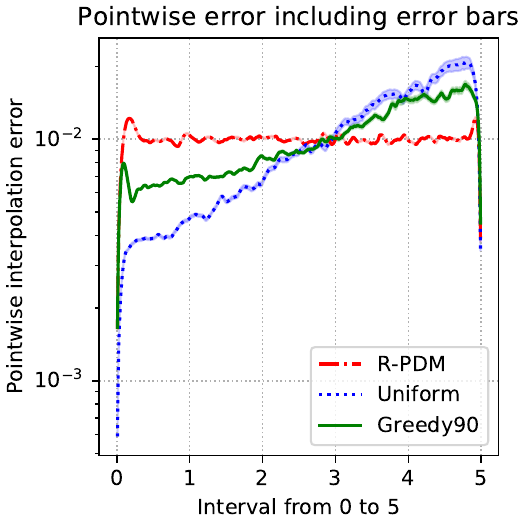}
    \caption{Error in discretization points with errorbars}
    \label{fig: increasing-average-error}
    \end{subfigure}
    \begin{subfigure}{.49\textwidth}
    \centering
    \includegraphics[width=\linewidth]{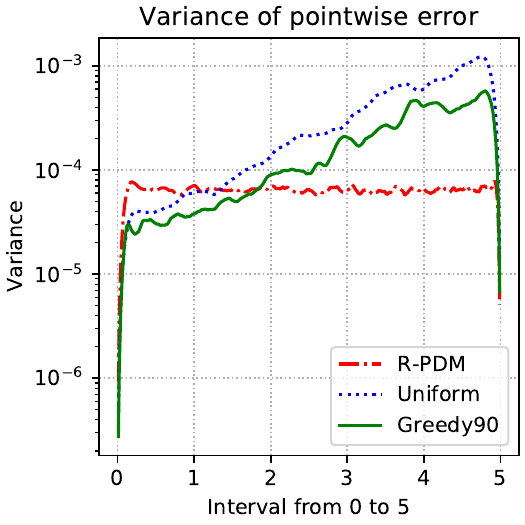}
    \caption{Variance in discretization points}
    \label{fig: increasing-variance}
    \end{subfigure}
    \caption{Pointwise error and variance of Example $1$}
\end{figure}

Figure~\ref{fig: increasing-average-error} displays these averaged pointwise errors, including error bars. These error bars are determined by the variance of both methods, i.e., we computed 
\begin{align}
\label{eq: pointwise variance interpolation}
V_t(y_i) \coloneq \frac{1}{K}\sum_{k=1}^K \left(J(y_i, \eta_t^k) - E_t(y_i)\right)^2.
\end{align}
For all methods, the error is low near the boundary, since the piecewise approximation is, by definition, always exact in the boundary points. We note that the pointwise error for R-PDM is always around $10^{-2}$ (except close to the boundary). Uniform sampling and the weak greedy method have lower pointwise errors on the left side of the interval and higher values on the right side. We note that overall, the error bars for R-PDM seem smaller than the error bars for uniform sampling and the weak greedy method. For clarity, the variance in the interval $[0,5]$ is shown in Figure~\ref{fig: increasing-variance}, and we note that the supremum variance of uniform sampling and the weak greedy method is an order of magnitude greater than the supremum variance of R-PDM. Moreover, the variance of R-PDM stays around $10^{-4}$, whereas the variance for the other two methods oscillates.

\subsubsection{Example 2}
In the second experiment, we consider the function (see Figure~\ref{fig: convex exact function})
\[
    f(x) = \frac{1}{200} \bigl((x-6)^4 + (x-2)^2 + 2\bigr),\qquad x\in[0,10].
\]
We note that this function is again infinitely many times differentiable and strongly convex, but this function is no longer increasing.
For R-PDM, we set $\alpha = 500$ and $\epsilon = 0.01$, and we again run both stochastic methods and the weak greedy method $1000$ times and select interpolation points until we reach a tolerance of $0.01$. We again discretize the interval $[0,10]$ into $500$ points $(y_1, \ldots, y_{L})$. The error $E_t$ given by~\eqref{eq: expectation interpolation} is displayed in Figure~\ref{fig: convex error decay}. We again note that the uniform sampling method selects way more interpolation nodes to reach the same error tolerance. We ran the weak greedy method with $90$ samples, and this again yields the lowest $L^1$-error. We note that the error of R-PDM is only slightly higher, but in the construction, R-PDM evaluated the pointwise error~\eqref{eq: error interpolation} $350$ times, whereas the weak greedy method required $1710$ evaluations. This difference in the number of evaluated samples is expected to increases in higher-dimensional problems (see \cite{nielen2025polytope}).

\begin{figure}[h]
    \begin{subfigure}{.49\textwidth}
    \centering
    \includegraphics[width=\linewidth]{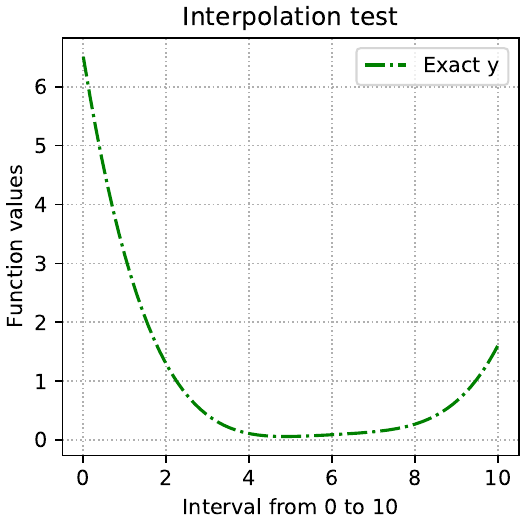}
    \caption{Exact function $f(x) = \frac{1}{200}((x-6)^4 + (x-2)^2 + 2)$}
    \label{fig: convex exact function}
    \end{subfigure}
    \begin{subfigure}{.49\textwidth}
    \centering
    \includegraphics[width=\linewidth]{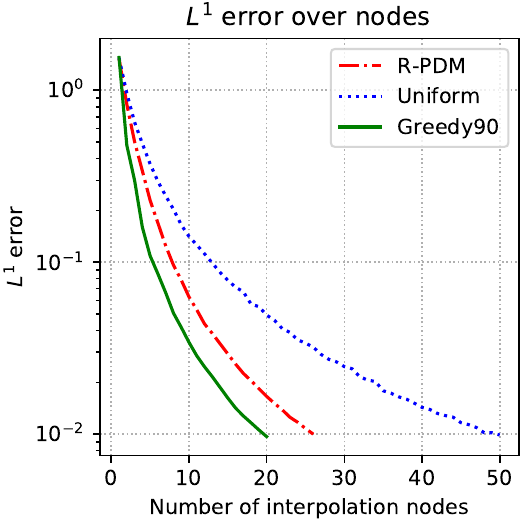}
    \caption{Decay of the $L^1$-error over the nodes}
    \label{fig: convex error decay}
    \end{subfigure}
    \caption{Example $2$}
\end{figure}

The variance over time steps (see~\eqref{eq: variance interpolation}) is given in Figure~\ref{fig: total-variance-convex}. We note that this time the variance is lowest for the weak greedy method. Both the weak greedy method and R-PDM achieve a variance that is more than an order of magnitude less than the variance of uniform sampling in the final configuration, despite the larger number of selected interpolation nodes for uniform sampling.

Lastly, the pointwise error~\eqref{eq: num pointwise error interpolation} in $500$ discretization points is displayed in Figure~\ref{fig: convex-average-error} with error bars. These error bars are again determined by the variance (for clarity displayed in Figure~\ref{fig: convex-variance}). 
The errors of R-PDM and weak greedy sampling are comparable in this example. Compared to these two methods, the average error is lower for uniform sampling in the interval $[3,9]$ and higher everywhere else. This is roughly the same interval where the variance is lower for uniform sampling compared to R-PDM. From Figure~\ref{fig: convex exact function}, we note that this interval also corresponds to the flattest part of the function, and, therefore, the part that can be reasonably approximated with a linear function. Since R-PDM and weak greedy consider the approximation error in their selection procedure, fewer interpolation nodes are selected in this region compared to the uniform sampling. This explains the difference in variance. Apart from the boundary, the pointwise error variances for R-PDM and weak greedy sampling remain around the value $10^{-4}$. The variance for the uniform sampling shows greater variability. For uniform sampling, the variance can be very high in some regions and very low in others. 


\begin{figure}[h]
    \begin{subfigure}{.49\textwidth}
    \centering
    \includegraphics[width=\linewidth]{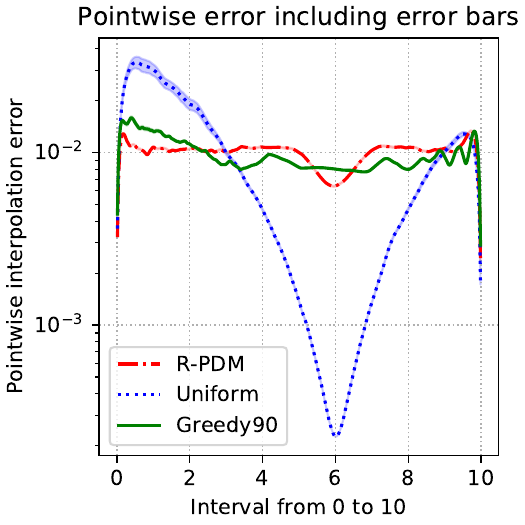}
    \caption{Error in discretization points with errorbars}
    \label{fig: convex-average-error}
    \end{subfigure}
    \begin{subfigure}{.49\textwidth}
    \centering
    \includegraphics[width=\linewidth]{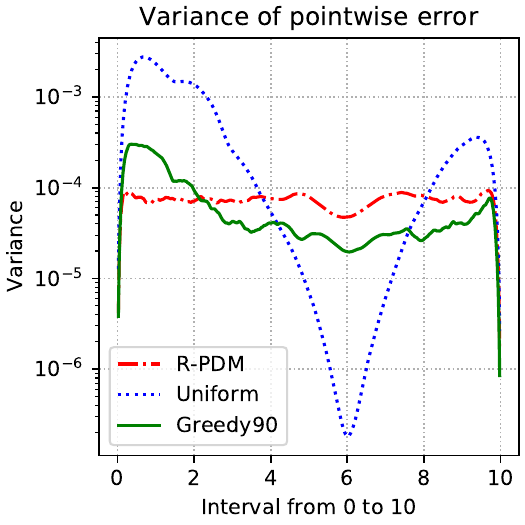}
    \caption{Variance in discretization points}
    \label{fig: convex-variance}
    \end{subfigure}
    \caption{Pointwise error and variance of Example $2$}
\end{figure}

\subsubsection{Example 3}
In this example, we consider the function $f(x) = \sin(2x)$ (see Figure~\ref{fig: sin-exact function}) in the interval $[0,10]$. We note that this function is still infinitely many times differentiable but no longer strongly convex. According to Theorem~\ref{theorem: interpolation C2}, we still expect convergence in this example.
For R-PDM, we set $\alpha = 500$ and $\eps = 0.01$, and we run every method $1000$ times and select interpolation points until we reach a tolerance of $0.01$. The interval $[0,10]$ is discretized into $500$ points $(y_1, \ldots, y_{L})$. 

\begin{figure}[h]
    \begin{subfigure}{.49\textwidth}
    \centering
    \includegraphics[width=\linewidth]{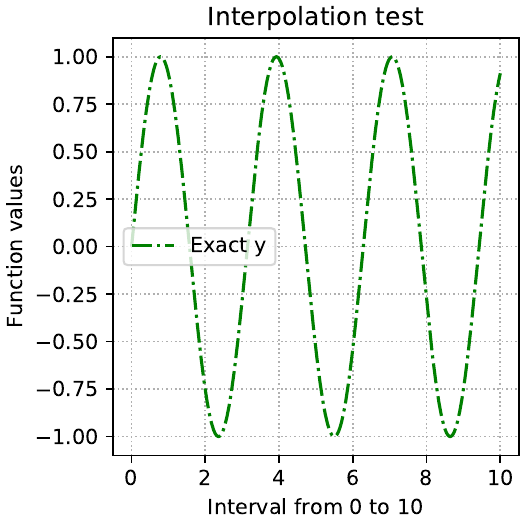}
    \caption{Exact function $f(x) = \sin(2x)$}
    \label{fig: sin-exact function}
    \end{subfigure}
    \begin{subfigure}{.49\textwidth}
    \centering
    \includegraphics[width=\linewidth]{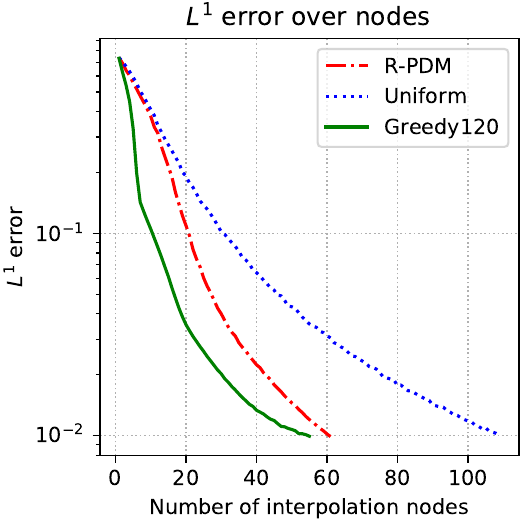}
    \caption{Decay of the $L^1$-error over the nodes}
    \label{fig: sin-error decay}
    \end{subfigure}
    \caption{Example $3$}
\end{figure}

\begin{figure}[h]
    \centering
    \includegraphics[width=0.49\linewidth]{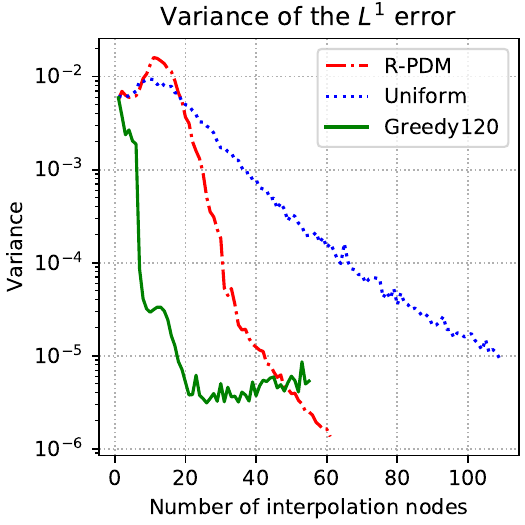}
    \caption{Variance of $L^1$-error per number of interpolation nodes of Example $3$}
    \label{fig: total-variance-sin}
\end{figure}

Figure~\ref{fig: sin-error decay} displays the average $L^1$-error given by~\eqref{eq: expectation interpolation}. We use $120$ samples in the weak greedy method. We again note that the $L^1$-error shows the slowest convergence for uniform sampling and the fastest for the weak greedy method, and as before, the difference between the weak greedy method and R-PDM is not large. We note that we evaluated $6480$ samples for the greedy method and only $1890$ samples for R-PDM. The variance of the $L^1$-error is given in Figure~\ref{fig: total-variance-sin}. We note that the variance of the weak greedy method is lowest at first, but starts oscillating at around $20$ selected interpolation nodes. For both R-PDM and uniform sampling, the variance increases at first, but for R-PDM the variance in the final configuration is lower than the variance for weak greedy.
A possible explanation for the initial increase in variance for R-PDM and uniform sampling is that the $L^1$-error does not necessarily decrease when an interpolation node is added. In Figure~\ref{fig: sin-average-error}, we plot the pointwise error with error bars, and in Figure~\ref{fig: sin-variance}, we plot the pointwise variance. 
The pointwise error for uniform sampling and the weak greedy method are comparable, although the greedy method has a slightly lower variance. The pointwise error for R-PDM has lower peaks than the other two methods. We also note that the supremum of the pointwise variance is lower.
Theorem~\ref{theorem: interpolation C2} states the convergence of the global error function $\scrG$ defined in~\eqref{eq: error interpolation}. This convergence is numerically observed in the examples for R-PDM and uniform sampling. We even observe convergence for greedy sampling even though this method does not follow the same theoretical convergence results. Moreover, the numerical examples suggest directions for future research, since the convergence rates say nothing about the variance of the methods.



\begin{figure}[H]
    \begin{subfigure}{.49\textwidth}
    \centering
    \includegraphics[width=\linewidth]{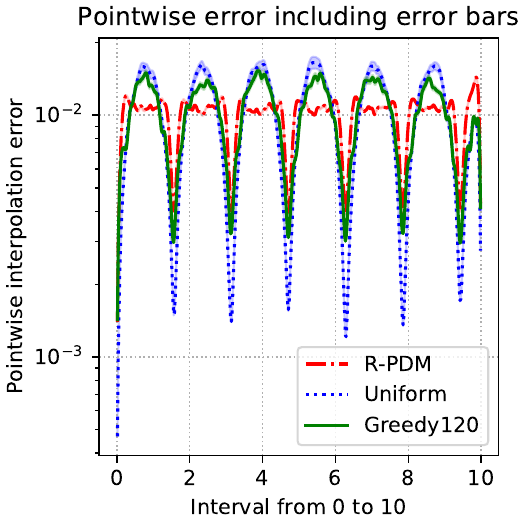}
    \caption{Error in discretization points with errorbars}
    \label{fig: sin-average-error}
    \end{subfigure}
    \begin{subfigure}{.49\textwidth}
    \centering
    \includegraphics[width=\linewidth]{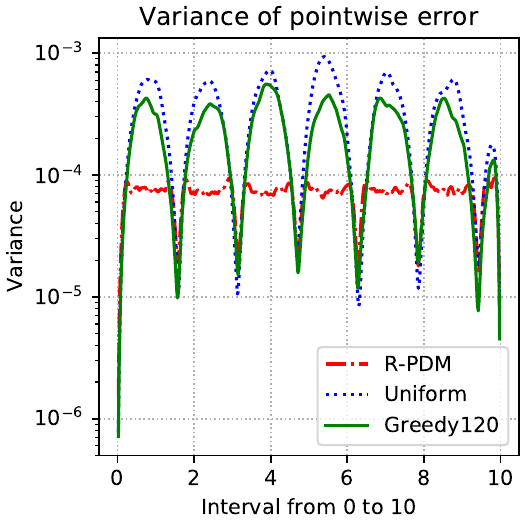}
    \caption{Variance in discretization points}
    \label{fig: sin-variance}
    \end{subfigure}
    \caption{Pointwise error and variance of Example $3$}
\end{figure}

\appendix
\section{Properties of \texorpdfstring{$\Omega$ and maps $\oplus$, $\sfN$ and $\mfS$}{TEXT}}
\label{app: compactness}
    
\begin{proof}[Proof of Lemma~\ref{lemma: compactness omega}]
    The compactness of $(\Omega, \mfd)$ follows from showing that it is complete and totally bounded.

    \medskip
    \emph{Completeness.} Let $(\eta^n)_{n \in \N}$ be a Cauchy sequence in $\Omega$. We need to show that $(\eta^n)_{n \in \N}$ converges and start by selecting a candidate limit. Let $0<\eps < \mathrm{diam}(P)$ and let $i \in \N$. Since $(\eta^n)_{n \in \N}$ is a Cauchy sequence, there exists an $N \in \N$, such that for all $n, m \ge N$ it holds that
        \begin{align*}
            \frac{1}{2^i} \bar{\mfd}(\eta_i^n, \eta_i^m) \le \mfd(\eta^n, \eta^m) <  \frac{\eps}{2^i},
        \end{align*}
        i.e., $(\eta_i^n)_{n \in \N}$ is Cauchy as well.
        We now claim that there exists an $M \in \N$, such that
        \begin{align}
        \label{claim M}
        \tag{\textasteriskcentered} 
        \begin{cases}
            \eta_i^n = \phi \quad &\text{for all}\, n \ge M,\; \text{or}\\
            \eta_i^n \in P \quad &\text{for all} \, n \ge M.
        \end{cases}
        \end{align}
        Indeed, for $0 < \eps < \rm{diam}(P)$, there exists an $M \in \N$ such that for all $n, m \ge M$
        \[\bar\mfd(\eta_i^n, \eta_i^m) < \eps.\]
        Therefore, \eqref{claim M} must hold since $\bar{\mfd}(\eta_i^n, \eta_i^m) = \rm{diam}(P) > \eps$ for any $\eta_i^n \in P$ and $\eta_i^m = \phi$. Since $\eta_i^n = \phi$, for all $n \ge M$ implies $\lim_{n\to\infty} \eta_i^n = \phi$, and $\eta_i^n \in P$ for all $n \ge M$ implies $(\eta_i^n)_{n \ge M}$ is a Cauchy sequence in $P \subset \R^d$, and $\R^d$ is complete, we conclude that $(\eta_i^n)_{n \in \N}$ has a limit.  

        We define the candidate limit of $(\eta^n)_{n \in \N}$ as $\eta^* = (\eta_1^*, \eta_2^*, \ldots)$, where $\eta_i^* \coloneq \lim_{n \to \infty} \eta_i^n$ for $i \in \N$. Then $\eta^* \in \Omega$. We next show that $\lim_{n\to \infty} \eta^n = \eta^*$.
        For all $k \in \N$, and for all $n, m \ge N$, 
        \begin{align*}
            \sum_{i = 1}^k \frac{1}{2^i} \bar{\mfd}(\eta_i^n, \eta_i^m)  < \eps.
        \end{align*}
        By passing $m$ to infinity, we find
        \begin{align*}
            \sum_{i = 1}^k \frac{1}{2^i} \bar{\mfd}(\eta_i^n, \eta_i^*)  < \eps.
        \end{align*}
        Then by passing $k$ to infinity, we get $\mfd(\eta^n, \eta^*) < \eps$, concluding the completeness proof.

        \medskip        
        \emph{Total boundedness.} Next, we show $(\Omega, \mfd)$ is totally bounded, i.e., for every $\eps > 0$, there exists a finite set of points $\Omega^\eps$, such that for every $\eta \in \Omega$, there exists a $\gamma \in \Omega^\eps$ such that $\mfd(\eta, \gamma) < \eps$.

        Let $\eps > 0$, then there exists a $K \in \N$, such that $\sum_{i = K+1}^\infty 2^{-i}\rm{diam}(P) < \eps/2$.
        Since $P \subset \R^d$ is compact, there exists a finite set $P^\eps$ such that for every $p \in P$, there exists an $x \in P^\eps$ such that $$|x-p| < \frac{\eps}{2K}.$$
        Then, let $\Omega^\eps$ be given by
        \begin{align*}
            \Omega^\eps \coloneq \bigl\{\gamma = (\gamma_1, \ldots, \gamma_K, \phi, \ldots) \in \Omega \, : \, \gamma_i \in P^\eps {\cup} \{\phi\}\bigr\}.
        \end{align*}
        Clearly, $\Omega^\eps$ is a finite set. Now, let $\eta \in \Omega$. If $\eta_i = \phi$ for $i \in \{1, \ldots, K\}$, set $\gamma_i = \phi$, otherwise set $\gamma_i = x$, with $x \in P^\eps$ such that $|\eta_i - x| < \eps/(2K)$. Moreover, let $\gamma_i = \phi$ for $i > K$. Then $\gamma \in \Omega^\eps$, and moreover
        \begin{align*}
            \mfd(\eta, \gamma) = \sum_{i=1}^K \frac{1}{2^i} \bar{\mfd}(\eta_i, \gamma_i) + \sum_{i=K+1}^\infty \frac{1}{2^i}\bar{\mfd}(\eta_i, \gamma_i) < K \frac{\eps}{2K} + \frac{\eps}{2} = \eps.
        \end{align*}
        Hence, we conclude $(\Omega, \mfd)$ is totally bounded, and therefore compact. 
        To show the continuity of $\sfN$, let $\eps > 0$ and $\eta \in \Omega_m$. Let $0< \delta <\min\{\eps, \text{diam}(P)/2^{m+1}\}$. Then for all $\gamma \in \Omega$, such that $\mfd(\eta, \gamma) < \delta$, it holds that
        $$\sfN(\gamma) - \sfN(\eta) = 0 < \eps.$$
        Hence, $\sfN$ is continuous and, therefore, measurable.

        Moreover, to show the continuity of $\oplus$, let $$\mfd_{\Omega \times P}((\eta, y), (\gamma, z)) \coloneq \mfd(\eta, \gamma) + |y - z|_2.$$
        Let $(\eta, y) \in \Omega \times P$ be chosen arbitrarily. Let $\eps > 0$, then for $0< \delta < \min\big(\eps, \rm{diam}(P)/2^{\sfN(\eta) + 1}\big)$, we have that for all $(\gamma, z) \in B_{\delta}((\eta, y))$, it holds that $\sfN(\eta) = \sfN(\gamma)$ since $\delta < \rm{diam}(P)/2^{\sfN(\eta) + 1}$. Therefore,
    \begin{align*}
    \mfd(\eta {\oplus} y, \gamma {\oplus} z) &=  \sum_{i=1}^{\sfN(\eta)} \frac{1}{2^i} \bar{\mfd}(\eta_i, \gamma_i) + \frac{1}{2^{\sfN(\eta) + 1}} |y-z|_2 \le \sum_{i=1}^{\sfN(\eta)} \frac{1}{2^i} \bar{\mfd}(\eta_i, \gamma_i) + |y-z|_2 < \eps. \qedhere
    \end{align*}
    \end{proof}

\begin{proof}[Proof of \ref{lemma: order map}]
   Let $\eps>0$ be arbitrary and $\eta \in \Omega$. Suppose $\mfd(\mfS(\gamma),\mfS(\eta)) <\eps$. We show that there exists some $\delta=\delta(\eps)>0$ such that
   \[
    \mfd(\gamma,\eta)<\delta\;\;\Longrightarrow\;\; \mfd(\mfS(\gamma),\mfS(\eta)) <\eps.
   \]
    We start by recalling from the proof of Lemma~\ref{lemma: compactness omega} that $0< \delta < \text{diam}(P)/2^{\sfN(\eta)+1}$, we have that $\sfN(\gamma)=\sfN(\eta)$. Moreover, we assume for the moment that \begin{equation}\label{property: distinct}\tag{$*$}
        \eta_i\ne \eta_j\quad \text{for $i\ne j$, $i,j=1,\ldots,\sfN(\eta)$}.
    \end{equation}

   For $\eta$ with property \eqref{property: distinct}, we define $d_{\min} \coloneq \min\{|\eta_i - \eta_j| \, | i \ne j, \, i, j \in \{1, \ldots, \sfN(\eta)\}\}> 0$. Now let $\sigma$ be the unique permutation such that
   \[
        \mfS(\eta) = (\eta_{\sigma(1)},\ldots, \eta_{\sigma(\sfN(\eta))},\phi,\ldots).
   \]
   We claim that for $\delta>0$ sufficiently small, we also have that
   \[
        \mfS(\gamma) = (\gamma_{\sigma(1)},\ldots, \gamma_{\sigma(\sfN(\eta))},\phi,\ldots).
   \]
   Indeed, choosing $\delta < \min\{\eps,d_{\min}\}/2^{\sfN(\eta)+1}$, we find that
   \[
        |\gamma_i - \eta_i| \le 2^{\sfN(\eta)-i} |\gamma_i - \eta_i| \le 2^{\sfN(\eta)}\mfd(\gamma,\eta) < \min\{\eps,d_{\min}\}/2\qquad\text{for all $i=1,\ldots,\sfN(\eta)$},
   \]
   thus implying that the $\gamma_i'$s are also distinct and that
   \[
        \gamma_{\sigma(i+1)} - \gamma_{\sigma(i)} \ge d_{\min} - |\gamma_{\sigma(i+1)} - \eta_{\sigma(i+1)}| - |\eta_{\sigma(i)} - \gamma_{\sigma(i)}| > 0 \qquad\text{for all $i=1,\ldots,\sfN(\eta)$},
   \]
   which verifies the claim. Consequently, we obtain
   \begin{align*}
        \mfd(\mfS(\gamma),\mfS(\eta)) = \sum_{i=1}^{\sfN(\eta)} \frac{1}{2^i} |\gamma_{\sigma(i)} - \eta_{\sigma(i)}| < \frac{1}{2} \min\{\eps,d_{\min}\} \le \eps,
    \end{align*}
    which proves the assertion under property \eqref{property: distinct}.

    For the general case, we choose $\tilde\eta\in\Omega$ with property \eqref{property: distinct} satisfying $\mfd(\tilde\eta,\eta) < \delta_1< \eps/2$ for some $\delta_1<0$ and for which there is a permutation $\sigma$ such that $\eta_{\sigma(i)}=\tilde\eta_{\sigma(i)}$ for all $i=1,\ldots,\sfN(\eta)$. In this way, we choose $\delta_2>0$ as before for $\mfd(\gamma,\tilde\eta)$ obtain
    \[
        \mfd(\mfS(\gamma),\mfS(\eta)) \le \mfd(\mfS(\gamma),\mfS(\tilde\eta)) + \mfd(\mfS(\tilde\eta),\mfS(\eta)) < \frac{\eps}{2} + \frac{\eps}{2} = \eps,
    \]
    thereby concluding the proof.
\end{proof}

\section{Well-posedness}
\label{app: well-definiteness}
We show that the process with generator $L$ given in~\eqref{eq: generator} exists. Before doing that we observe that the $L$ may be expressed as
\[
    LF(\eta) = \int_\Omega \bigl[F(\sigma) - F(\eta)\bigr]\,\kappa(\eta,d\sigma),
\]
where, for every $(\eta,A)\in\Omega\times\calB_\Omega$,
\[
    \kappa(\eta,A) \coloneq \int_P \delta_{\eta {\oplus} y}(A)\,\lambda(\eta,dy) = \int_P \mathbf{1}_A(\eta{\oplus}y)\,\lambda(\eta,dy).
\]
The idea is then to show that $\kappa$ is a well-defined bounded transition kernel. 

\begin{lemma}
    The map $\kappa{:}\Omega\times\calB_\Omega\to[0,+\infty)$ defined above is a bounded transition kernel.
\end{lemma}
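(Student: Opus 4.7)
The plan is to verify the three defining properties of a bounded transition kernel in turn: (i) that $\kappa(\eta,\cdot)$ is a Borel measure on $\Omega$ for each fixed $\eta\in\Omega$, (ii) that $\eta\mapsto\kappa(\eta,A)$ is Borel measurable for each fixed $A\in\calB_\Omega$, and (iii) that $\sup_{\eta\in\Omega}\kappa(\eta,\Omega)<\infty$.

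For (i), fix $\eta\in\Omega$. By Lemma~\ref{lemma: compactness omega}, the map $\Phi_\eta{:}P\to\Omega$, $y\mapsto \eta{\oplus} y$, is continuous and hence Borel measurable. Therefore $\kappa(\eta,\cdot)$ is simply the pushforward of the probability measure $\lambda(\eta,\cdot)\in\calP(P)$ under $\Phi_\eta$, and so it is automatically a (Borel probability) measure on $\Omega$.

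For (ii), the key observation is that the joint map $(\eta,y)\mapsto \eta{\oplus} y$ is continuous on $\Omega\times P$ (again by Lemma~\ref{lemma: compactness omega}), so $(\eta,y)\mapsto \mathbf{1}_A(\eta{\oplus}y)$ is jointly Borel measurable for every $A\in\calB_\Omega$. The claim then reduces to the standard fact that integrating a jointly Borel measurable function against a transition kernel produces a Borel measurable function of the parameter. I would establish this by a monotone class argument: let $\calH$ be the collection of bounded Borel $f{:}\Omega\times P\to\R$ for which $\eta\mapsto\int_P f(\eta,y)\,\lambda(\eta,dy)$ is Borel measurable. For $f=\mathbf{1}_B\otimes\mathbf{1}_C$ with $B\in\calB_\Omega,\,C\in\calB_P$ the integral equals $\mathbf{1}_B(\eta)\lambda(\eta,C)$, which is measurable by Assumption~\ref{assumption: rate function}(2); linearity and the monotone convergence theorem give that $\calH$ is a monotone vector space containing an algebra that generates $\calB_{\Omega\times P}$. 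The functional monotone class theorem then yields $\calH\supseteq$ all bounded Borel functions, and specializing to $f(\eta,y)=\mathbf{1}_A(\eta{\oplus}y)$ gives the measurability of $\eta\mapsto\kappa(\eta,A)$.

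For (iii), boundedness is immediate:
\[
\kappa(\eta,\Omega)=\int_P \mathbf{1}_\Omega(\eta{\oplus}y)\,\lambda(\eta,dy)=\lambda(\eta,P)=1
\]
by Assumption~\ref{assumption: rate function}(3), so $\sup_{\eta\in\Omega}\kappa(\eta,\Omega)=1<\infty$.

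The only genuinely technical point is step (ii); everything else is formal. The monotone class argument itself is routine, and the essential input enabling it, namely the joint continuity (hence joint Borel measurability) of $\oplus$, has already been supplied by Lemma~\ref{lemma: compactness omega}. Once $\kappa$ is a bounded transition kernel with rate $\sup_\eta\kappa(\eta,\Omega)\le 1$, the existence and uniqueness of the associated pure-jump Markov process on the compact Polish space $(\Omega,\mfd)$ follow directly from \cite[\S4.2]{ethier2009markov}.
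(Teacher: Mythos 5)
Your proposal is correct, and its overall structure matches the paper's proof: boundedness via $\kappa(\eta,\Omega)=\lambda(\eta,P)=1$, and measurability of $\eta\mapsto\kappa(\eta,A)$ via Borel measurability of $(\eta,y)\mapsto\mathbf{1}_A(\eta{\oplus}y)$ (using the measurability of $\oplus$ from Lemma~\ref{lemma: compactness omega}) followed by a monotone class argument, which you spell out in more detail than the paper does. The one genuine difference is how you establish that $\kappa(\eta,\cdot)$ is a probability measure: you observe that it is the pushforward of $\lambda(\eta,\cdot)$ under the map $y\mapsto\eta{\oplus}y$, so countable additivity is automatic, whereas the paper verifies the measure axioms by hand, checking continuity from below via the monotone convergence theorem and the continuity-from-below of the Dirac measures $\delta_{\eta{\oplus}y}$. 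Your pushforward route is shorter and arguably cleaner; the paper's direct verification is more elementary and self-contained. Either way the essential inputs are the same, namely Assumption~\ref{assumption: rate function} and the measurability of $\oplus$, so the argument is complete as sketched.
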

\begin{proof}
    The boundedness of $\kappa$ follows directly from the boundedness of $\lambda$. Indeed, we have that $\kappa(\eta,\Omega) = \lambda(\eta,P)=1$ for every $\eta\in\Omega$.

    To show that $\kappa$ is a transition kernel, we start by noticing that the simple function $\Omega\times P\ni (\eta,y)\mapsto \mathbf{1}_A(\eta{\oplus}y)$ is Borel measurable for any $A\in\calB_\Omega$ since it is a composition of two Borel measurable maps. By a standard monotone class argument, we then have that $\Omega\ni \eta\mapsto \kappa(\eta,A)$ is Borel measurable.

    To show that $\kappa(\eta,\cdot)\in \calP(\Omega)$ for every $\eta\in\Omega$, we prove an equivalent definition of a measure. Clearly, $\kappa(\eta,\emptyset)=0$ and $\kappa(\eta,A{\cup} B) = \kappa(\eta,A) + \kappa(\eta,B)$ for disjoint sets $A,B\in\calB_\Omega$. Now let $\{A_i\}_{n\in\N}\subset\calB_\Omega$ be any increasing family of measurable sets such that $\cup_{i\in\N} A_i\in\calB_\Omega$. By the monotone convergence theorem and the continuity-from-below of the Dirac measure, we find
    \begin{align*}
        \lim_{i\to\infty} \kappa(\eta,A_i) &= \lim_{i\to\infty} \int_P \mathbf{1}_{A_i}(\eta{\oplus}y)\,\lambda(\eta,dy) = \int_P \lim_{i\to\infty} \mathbf{1}_{A_i}(\eta{\oplus}y)\,\lambda(\eta,dy) \\
        &= \int_P \lim_{i\to\infty} \delta_{\eta{\oplus}y}(A_i)\,\lambda(\eta,dy) = \int_P \delta_{\eta{\oplus}y}(\cup_{i\in\N} A_i)\,\lambda(\eta,dy) = \kappa(\eta, \cup_{i\in\N} A_i).
    \end{align*}
    Together with $\kappa(\eta,\Omega)=1$, this implies that $\kappa(\eta,\cdot)\in \calP(\Omega)$ for every $\eta\in\Omega$.
\end{proof}

\section*{Acknowlegdements}
This work is part of a project that has received funding from the European Research Council (ERC) under the
European Union’s Horizon 2020 Research and Innovation Programme (Grant Agreement No. 818473).
The authors thank Karen Veroy-Grepl for useful comments and discussions. The authors acknowledge using Grammarly and chatGPT to polish the written text for spelling, grammar, and general style.

\printbibliography

@article{cohen2020reduced,
  title={Reduced basis greedy selection using random training sets},
  author={Cohen, A. and Dahmen, W. and DeVore, R. and Nichols, J.},
  journal={ESAIM: Mathematical Modelling and Numerical Analysis},
  volume={54},
  number={5},
  pages={1509--1524},
  year={2020},
  publisher={EDP Sciences}
}

@article{barrault2004,
  title={An ‘empirical interpolation’ method: application to efficient reduced-basis discretization of partial differential equations},
  author={Barrault, M. and Maday, Y. and Nguyen, N. C. and Patera, A. T.},
  journal={Comptes Rendus Mathematique},
  volume={339},
  number={9},
  pages={667--672},
  year={2004},
  publisher={Elsevier}
}

@inproceedings{veroy2003posteriori,
  title={A posteriori error bounds for reduced-basis approximation of parametrized noncoercive and nonlinear elliptic partial differential equations},
  author={Veroy, K. and Prud'Homme, C. and Rovas, D. and Patera, A.},
  booktitle={16th AIAA Computational Fluid Dynamics Conference},
  pages={3847},
  year={2003}
}

@article{rozza2008reduced,
  title={Reduced basis approximation and a posteriori error estimation for affinely parametrized elliptic coercive partial differential equations: application to transport and continuum mechanics},
  author={Rozza, G. and Huynh, D. B. P. and Patera, A. T.},
  journal={Archives of Computational Methods in Engineering},
  volume={15},
  number={3},
  pages={229--275},
  year={2008},
  publisher={Springer}
}

@article{ngoc2005certified,
  title={Certified real-time solution of parametrized partial differential equations},
  author={Nguyen, N. C. and Veroy, K. and Patera, A. T.},
  journal={Handbook of Materials Modeling},
  pages={1523--1558},
  year={2005},
  publisher={Springer}
}

@article{sen2008reduced,
  title={Reduced-basis approximation and a posteriori error estimation for many-parameter heat conduction problems},
  author={Sen, S.},
  journal={Numerical Heat Transfer, Part B: Fundamentals},
  volume={54},
  number={5},
  pages={369--389},
  year={2008},
  publisher={Taylor \& Francis}
}

@article{hesthaven2014efficient,
  title={Efficient greedy algorithms for high-dimensional parameter spaces with applications to empirical interpolation and reduced basis methods*},
  author={Hesthaven, J. S. and Stamm, B. and Zhang, S.},
  journal={ESAIM: Mathematical Modelling and Numerical Analysis},
  volume={48},
  number={1},
  pages={259--283},
  year={2014},
  publisher={EDP Sciences}
}

@article{jiang2017offline,
  title={Offline-enhanced reduced basis method through adaptive construction of the surrogate training set},
  author={Jiang, J. and Chen, Y. and Narayan, A.},
  journal={Journal of Scientific Computing},
  volume={73},
  pages={853--875},
  year={2017},
  publisher={Springer}
}

@article{urban2014greedy,
  title={Greedy sampling using nonlinear optimization},
  author={Urban, K. and Volkwein, S. and Zeeb, O.},
  journal={Reduced Order Methods for modeling and computational reduction},
  pages={137--157},
  year={2014},
  publisher={Springer}
}

@article{chen2010certified,
  title={Certified reduced basis methods and output bounds for the harmonic Maxwell's equations},
  author={Chen, Y. and Hesthaven, J. S. and Maday, Y. and Rodr{\'\i}guez, J.},
  journal={SIAM Journal on Scientific Computing},
  volume={32},
  number={2},
  pages={970--996},
  year={2010},
  publisher={SIAM}
}

@inproceedings{rozza2009reduced,
  title={Reduced basis methods and a posteriori error estimators for heat transfer problems},
  author={Rozza, G. and Nguyen, N. C. and Patera, A. T. and Deparis, S.},
  booktitle={Heat Transfer Summer Conference},
  number = {HT2009-88211},
  pages={753--762},
  year={2009}
}

@article{rozza2011reduced,
  title={Reduced basis approximation and error bounds for potential flows in parametrized geometries},
  author={Rozza, G.},
  journal={Communications in Computational Physics},
  volume={9},
  number={1},
  pages={1--48},
  year={2011},
  publisher={Cambridge University Press}
}

@article{huynh2007reduced,
  title={Reduced basis approximation and a posteriori error estimation for stress intensity factors},
  author={Huynh, D. B. P. and Patera, A. T.},
  journal={International Journal for Numerical Methods in Engineering},
  volume={72},
  number={10},
  pages={1219--1259},
  year={2007},
  publisher={Wiley Online Library}
}

@article{deparis2008reduced,
  title={Reduced basis error bound computation of parameter-dependent Navier--Stokes equations by the natural norm approach},
  author={Deparis, S.},
  journal={SIAM journal on numerical analysis},
  volume={46},
  number={4},
  pages={2039--2067},
  year={2008},
  publisher={SIAM}
}

@article{rozza2007stability,
  title={On the stability of the reduced basis method for Stokes equations in parametrized domains},
  author={Rozza, G. and Veroy, K.},
  journal={Computer methods in applied mechanics and engineering},
  volume={196},
  number={7},
  pages={1244--1260},
  year={2007},
  publisher={Elsevier}
}

@article{negri2013reduced,
  title={Reduced basis method for parametrized elliptic optimal control problems},
  author={Negri, F. and Rozza, G. and Manzoni, A. and Quarteroni, A.},
  journal={SIAM Journal on Scientific Computing},
  volume={35},
  number={5},
  pages={A2316--A2340},
  year={2013},
  publisher={SIAM}
}

@article{chen2014weighted,
  title={A weighted empirical interpolation method: a priori convergence analysis and applications},
  author={Chen, P. and Quarteroni, A. and Rozza, G.},
  journal={ESAIM: Mathematical Modelling and Numerical Analysis},
  volume={48},
  number={4},
  pages={943--953},
  year={2014},
  publisher={EDP Sciences}
}

@article{haasdonk2011training,
  title={A training set and multiple bases generation approach for parameterized model reduction based on adaptive grids in parameter space},
  author={Haasdonk, B. and Dihlmann, M. and Ohlberger, M.},
  journal={Mathematical and Computer Modelling of Dynamical Systems},
  volume={17},
  number={4},
  pages={423--442},
  year={2011},
  publisher={Taylor \& Francis}
}

@article{nielen2025polytope,
  title={Polytope division method: A scalable sampling method for problems with high-dimensional parameters},
  author={Nielen, E. and Tse, O. and Veroy, K.},
  journal={SIAM Journal on Scientific Computing},
  volume={47},
  number={6},
  pages={B1424--B1449},
  year={2025},
  publisher={SIAM}
}

@article{barron2008approximation,
  title={Approximation and learning by greedy algorithms},
  author={Barron, A. R. and Cohen, A. and Dahmen, W. and DeVore, R. A.},
  year={2008},
journal = {The Annals of Statistics},
volume={36},
number={1},
pages={64--94},
publisher={Institute of Mathematical Statistics}
}

@article{li2025new,
  title={A new analysis of empirical interpolation methods and Chebyshev greedy algorithms},
  author={Li, Y.},
  journal={SIAM Journal on Numerical Analysis},
  volume={63},
  number={2},
  pages={931--948},
  year={2025},
  publisher={SIAM}
}

@article{campagna2021greedy,
  title={Greedy algorithms for learning via exponential-polynomial splines},
  author={Campagna, R. and De Marchi, S. and Perracchione, E. and Santin, G.},
  journal={arXiv preprint arXiv:2109.14299},
  year={2021}
}

@article{schaback2014greedy,
  title={Greedy sparse linear approximations of functionals from nodal data},
  author={Schaback, R.},
  journal={Numerical Algorithms},
  volume={67},
  pages={531--547},
  year={2014},
  publisher={Springer}
}

@article{devore1996some,
  title={Some remarks on greedy algorithms},
  author={DeVore, R. A. and Temlyakov, V. N.},
  journal={Advances in Computational Mathematics},
  volume={5},
  number={1},
  pages={173--187},
  year={1996},
  publisher={Springer}
}

@article{maday2007general,
  title={A general, multipurpose interpolation procedure: the magic points},
  author={Maday, Y. and Nguyen, N. C. and Patera, A. T. and Pau, G. S. H.},
  year={2007}
}

@inproceedings{mirzasoleiman2015lazier,
  title={Lazier than lazy greedy},
  author={Mirzasoleiman, B. and Badanidiyuru, A. and Karbasi, A. and Vondr{\'a}k, J. and Krause, A.},
  booktitle={Proceedings of the AAAI Conference on Artificial Intelligence},
  volume={29},
  number={1},
  year={2015}
}

@article{binev2011convergence,
  title={Convergence rates for greedy algorithms in reduced basis methods},
  author={Binev, P. and Cohen, A. and Dahmen, W. and DeVore, R. and Petrova, G. and Wojtaszczyk, P.},
  journal={SIAM Journal on Mathematical Analysis},
  volume={43},
  number={3},
  pages={1457--1472},
  year={2011},
  publisher={SIAM}
}

@article{wenzel2024data,
  title={Data-driven kernel designs for optimized greedy schemes: A machine learning perspective},
  author={Wenzel, T. and Marchetti, F. and Perracchione, E.},
  journal={SIAM Journal on Scientific Computing},
  volume={46},
  number={1},
  pages={C101--C126},
  year={2024},
  publisher={SIAM}
}

@book{ucinski2004optimal,
  title={Optimal measurement methods for distributed parameter system identification},
  author={Ucinski, D.},
  year={2004},
  publisher={CRC press}
}

@article{wu2019active,
  title={Active learning for regression using greedy sampling},
  author={Wu, D. and Lin, C-T. and Huang, J.},
  journal={Information Sciences},
  volume={474},
  pages={90--105},
  year={2019},
  publisher={Elsevier}
}

@book{ethier2009markov,
  title={Markov processes: characterization and convergence},
  author={Ethier, S. N. and Kurtz, T. G.},
  year={2009},
  publisher={John Wiley \& Sons}
}

@article{kelman1960conditions,
  title={Conditions for Integrand of an Improper Integral to be Bounded or Tend to Zero},
  author={Kelman, R. B. and Rivlin, T. J.},
  journal={The American Mathematical Monthly},
  volume={67},
  number={10},
  pages={1019--1022},
  year={1960},
  publisher={JSTOR}
}
\end{document}